\theoremstyle{definition}
\newtheorem{defin}{Definition}[section]
\newtheorem{rem}[defin]{Remark}
\newtheorem{ex}[defin]{Example}
\theoremstyle{plain} 
\newtheorem{prop}[defin]{Proposition}
\newtheorem{lem}[defin]{Lemma}
\newtheorem{cor}[defin]{Corollary}
\newtheorem{thm}[defin]{Theorem}
\Crefname{defin}{Definition}{Definition}
\Crefname{rem}{Remark}{Remarks}
\Crefname{ex}{Example}{Examples}
\Crefname{prop}{Proposition}{Propositions}
\Crefname{lem}{Lemma}{Lemmata}
\Crefname{cor}{Corollary}{Corollaries}
\Crefname{thm}{Theorem}{Theorems}
\def\N{\mathbb N}
\def\R{\mathbb R}
\def\Z{\mathbb Z}
\def\E{\mathcal E}
\def\I{\mathcal I}
\setlist[enumerate,1]{label={(\roman*)}}
\begin{document}

\title{Measure-geometric Laplacians on the real line}

\author[M.\ Kesseb\"ohmer]{M.\ Kesseb\"ohmer}
\address[M.\ Kesseb\"ohmer and H.~Weyer]{FB 3 -- Mathematik, Universit\"at Bremen, Bibliothekstr. 1, 28359 Bremen, Germany}
\author[T.\ Samuel]{T.\ Samuel}
\address[T.\ Samuel]{School of Mathematics, University of Birmingham, Edgbaston, Birmingham, B15 2TT, UK}
\author{H.~Weyer}

\subjclass[2010]{47G30; 42B35; 35P20.}
\keywords{measure-geometric Laplacians; spectral asymptotics; harmonic analysis.}

\maketitle

\begin{abstract}
Motivated by the fundamental theorem of calculus, and based on the works of Feller as well as Kac and Kre\u{\i}n, given an atomless Borel probability measure $\eta$ supported on a compact subset of $\R$, Freiberg and Z\"{a}hle introduced a measure-geometric approach to define a first order differential operator $\nabla_{\eta}$ and a second order differential operator $\Delta_{\eta}$, with respect to $\eta$.  We generalise this approach to measures of the form $\eta  \coloneqq \nu + \delta$, where $\nu$ is continuous and $\delta$ is finitely supported.  We determine analytic properties of $\nabla_{\eta}$ and $\Delta_{\eta}$ and show that $\Delta_{\eta}$ is a densely defined, unbounded, linear, self-adjoint operator with compact resolvent.  Moreover, we give a systematic way to calculate  the eigenvalues and eigenfunctions of  $\Delta_{\eta}$.  For two leading examples, we determine the eigenvalues and the eigenfunctions, as well as the asymptotic growth rates of the eigenvalue counting function.
%
%
\end{abstract}

\section{Introduction and statement of main results}

Kac posed the following famous question in \cite{MR0201237}. ``Can one hear the shape of a drum?''  Namely, can one reconstruct the geometry of a $n$-dimensional manifold from the eigenvalues of the associated Laplacian. In  1964 Milnor \cite{MR0162204} showed the existence of a pair of 16-dimensional tori whose associated Laplacians have the same eigenvalues but  which have different shapes.  Subsequently, for a given $n \geq 4$, Urakawa \cite{MR690649} produced the first examples of domains in $\R^{n}$ with this property. The problem in two dimensions remained open until 1992, when Gordon, Webb, and Wolpert \cite{MR1181812} constructed a pair of regions in the plane that have different shapes but whose associated Laplacians have identical eigenvalues.  Nevertheless, as observed by Weyl \cite{Weyl:1912}, Berry \cite{MR556688,MR573427}, Lapidus  \textsl{et al.\ }\cite{MR994168,MR1234502,MR1046509,MR1189091,MR1356166}, Beals and Greiner \cite{MR2513598} and many others, the spectrum of a Laplacian still tells us a lot about the shape of the underlying geometric structure.

As a special case of the generalised Kre\u{\i}n-Feller operator \cite{Fe57,KK68}, in \cite{FZ02} Freiberg and Z\"ahle introduced a measure-geometric approach to define a first order differential operator $\nabla_{\eta}$ and a second order differential operator $\Delta_{\eta, \eta} \coloneqq \nabla_{\eta} \circ \nabla_{\eta}$, with respect to an atomless Borel probability measure $\eta$ supported on a compact subset of $\mathbb{R}$.
%
%
In the case that $\eta$ is the Lebesgue measure, it was shown that $\nabla_{\eta}$ coincides with the weak derivative.  Moreover, a harmonic calculus for $\Delta_{\eta, \eta}$ was developed and, when $\eta$ is a self-similar measure supported on a Cantor set, Freiberg and Z\"{a}hle proved that the eigenvalue counting function of $\Delta_{\eta, \eta}$ is comparable to the square-root function. In \cite{KSW16} the exact eigenvalues and eigenfunctions of $\Delta_{\eta, \eta}$ were obtained and it was shown that the eigenvalues do not depend on the given measure. Arzt \cite{A15b}, Freiberg \cite{MR2017701,MR2030736,Fr05}, Fujita \cite{Fu87}, and Kotani and Watanabe \cite{MR661628} have also considered the Kre\u{\i}n-Feller operator $\Delta_{\eta, \Lambda} \coloneqq \nabla_{\eta} \circ \nabla_{\Lambda}$, where $\eta$ denotes a continuous Borel probability measure and $\Lambda$ denotes the Lebesgue measure. In the case that $\eta$ is a purely atomic measure, it has been shown in \cite{MR2513598} that the eigenvalues of $\Delta_{\eta, \Lambda}$  highly depend on the position and weights of the atoms.  Interestingly, the operators $\Delta_{\eta, \Lambda}$ and $\Delta_{\eta, \eta}$, in the case that $\eta$ is continuous, also appear as the infinitesimal generator of Liouville Brownian motion \cite{X_Jin_2017} and Liouville quantum gravity \cite{MR3272822}.

In \cite{KSW17} it was shown that the framework of Freiberg and Z\"{a}hle can be extended to include purely atomic measures $\eta$. Unlike in the case when one has a measure with a continuous distribution function, it was proven that the operators $\nabla_{\eta}$ and $\Delta_{\eta, \eta}$ are no longer symmetric. To circumvent this problem, the $\eta$-Laplacian was defined by $\Delta_{\eta} = -\nabla_{\eta}^{*} \circ \nabla_{\eta}$, where $\nabla_{\eta}^{*}$ denotes the adjoint of $\nabla_{\eta}$. Further, a matrix representation for these operators was given and shown to coincide with the normalised graph Laplacian of a cycle graph \cite{Bi93}. Moreover, the eigenvalues of $\Delta_{\eta}$ depend only on the weights of the atoms and are independent of the positions of the atoms.

In this article, we continue to develop the program of Freiberg and Z\"{a}hle for measures of the form $\eta = \nu + \delta$, where $\nu$ is continuous and $\delta$ is a finite sum of weighted Dirac point masses.   Indeed, we show, for such an $\eta$, one can define a first and a second order measure geometric differential operator, $\nabla_{\eta}$ and $\Delta_{\eta} = -\nabla_{\eta}^{*} \circ \nabla_{\eta}$ respectively.   Moreover, we determine properties of both operators; in particular, we show the following.

\begin{thm}\label{thm:main1}
The operator $\Delta_{\eta}$ is densely defined on the space $L^{2}_{\eta}$ of square-$\eta$-integrable functions.  Further, it is linear, self-adjoint, non-positive and has compact resolvent.
\end{thm}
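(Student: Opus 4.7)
The overall strategy is to reduce the theorem to a classical functional-analytic fact: if $T \colon \mathcal{D}(T) \subset H_{1} \to H_{2}$ is a densely defined closed operator between Hilbert spaces, then $T^{*}T$ is automatically densely defined, self-adjoint and non-negative (von Neumann). Since $\Delta_{\eta} = -\nabla_{\eta}^{*}\nabla_{\eta}$, once I verify that $\nabla_{\eta}$ is densely defined and closed on $L^{2}_{\eta}$, three of the four claimed properties (dense domain, linearity, self-adjointness, non-positivity) follow. The remaining, more substantive task is to establish that the resolvent is compact.

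First I would identify an explicit core for $\nabla_{\eta}$ that is manifestly dense in $L^{2}_{\eta}$. The natural candidate, motivated by the preceding constructions in the paper, is the linear span of functions built from $F_{\eta}$-translates of the standard trigonometric / step-function system used in \cite{KSW16,KSW17}; these certainly lie in $\mathcal{D}(\nabla_{\eta})$ because they are differentiable with respect to the distribution function $F_{\eta}$ off the atoms of $\delta$ and respect the appropriate matching/boundary conditions at atoms. Density in $L^{2}_{\eta}$ is inherited from the density of step functions for any finite Borel measure on a compact interval, restricted to the support of $\eta$.

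Next I would prove closedness. Let $(f_n) \subset \mathcal{D}(\nabla_{\eta})$ with $f_n \to f$ and $\nabla_{\eta} f_n \to g$ in $L^{2}_{\eta}$. Splitting $\eta = \nu + \delta$, the $\delta$-part controls pointwise convergence of $(f_n)$ at each atom (since atoms have strictly positive $\eta$-mass), while the $\nu$-part together with the uniform $L^{2}_{\eta}$-bound on $\nabla_{\eta}f_n$ gives, via the fundamental-theorem-of-calculus representation $f_n(x) = f_n(x_0) + \int \nabla_{\eta}f_n \, d\eta$ inherent in the definition of $\nabla_{\eta}$, uniform convergence of a subsequence on the continuous part of the support. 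Passing to the limit in this integral identity places $f$ in $\mathcal{D}(\nabla_{\eta})$ with $\nabla_{\eta} f = g$. The delicate point is the joint handling of continuous and atomic behaviour, in particular verifying that the one-sided limits at each atom and the jump conditions required for $f$ to lie in $\mathcal{D}(\nabla_{\eta})$ pass to the limit. This is the main technical obstacle.

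Finally, for compact resolvent I would show that $(\mathcal{D}(\nabla_{\eta}), \|\cdot\|_{\nabla_{\eta}})$, equipped with the graph norm, embeds compactly into $L^{2}_{\eta}$; this implies that $(I - \Delta_{\eta})^{-1}$ is compact by a standard argument. The embedding is proved by Arzelà--Ascoli: a graph-bounded sequence $(f_n)$ has, by the integral representation noted above, equicontinuous representatives with respect to the metric induced by $F_{\eta}$ off the atoms, hence a uniformly convergent subsequence on the continuous part; the atomic part is finite-dimensional so any bounded sequence there has a convergent subsequence. Combining these yields convergence in $L^{2}_{\eta}$. I expect the writing here to be routine once the domain and the closedness step are correctly set up.
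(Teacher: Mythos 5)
Your proposal is correct and follows the same overall architecture as the paper: dense definedness plus closedness of $\nabla_{\eta}$ feed into von Neumann's theorem for $\nabla_{\eta}^{*}\nabla_{\eta}$ (the paper packages this as \Cref{thm:Reed_Simon_VIII.3} together with \Cref{thm:nabla_closed}), and the compact resolvent is obtained exactly as you describe, by composing a compact embedding of the form domain into $L^{2}_{\eta}$ with the boundedness of $(\lambda\operatorname{Id}-\Delta_{\eta})^{-1}$ from $L^{2}_{\eta}$ into the form domain (\Cref{thm:resolvent}). Two implementation differences are worth recording. First, you prove closedness by passing to the limit in the defining integral identity, while the paper instead identifies $\textup{Dom}(\nabla_{\eta}^{*})$ explicitly as $\mathscr{D}_{\eta}^{1*}$ (\Cref{lem:d1mu*}) and deduces $\nabla_{\eta}=\nabla_{\eta}^{**}$; your route is more elementary, but the paper's detour is not wasted, since the explicit adjoint is needed anyway for \Cref{thm:main2}. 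Second, your Arzel\`a--Ascoli argument (H\"older-$\tfrac12$ equicontinuity with respect to $F_{\eta}$ via Cauchy--Schwarz, plus finitely many atoms) is the one-dimensional content of the Rellich embedding the paper invokes on each $A_{i}^{\textup{o}}$, so these coincide in substance. The one step you under-sell is the density of the core: under the reduction of \Cref{rmk:KSW16_2} the continuous part may vanish on an interval adjacent to an atom, in which case every element of $\mathscr{D}_{\eta}^{1}$ is forced to be constant on that interval, so $\mathds{1}_{\{z_{i}\}}$ can only be approximated jointly with the indicators of a block of neighbouring atoms; this case analysis is the bulk of \Cref{prop:D1_dense_in_L2}, and ``density of step functions in $L^{2}_{\eta}$'' does not by itself produce approximants lying in $\textup{Dom}(\nabla_{\eta})$.
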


For ease of exposition in the following result we assume that $\nu$ is a probability measure on $(0,1]$ with distribution function $F_{\nu}$ and let $\delta = \sum_{i=1}^{N} \alpha_{i} \delta_{z_{i}}$ with $0 < z_{1} < \ldots < z_{N} = 1$.

\begin{thm}\label{thm:main2}
A square-$\eta$-integrable function $f$ is an eigenfunction of $\Delta_{\eta}$ with corresponding eigenvalue $\lambda$ if and only if it is of the form
\begin{align*}
f(x) =
\begin{cases}
a_{1} \sin(b F_{\nu}(x) + \gamma_{1}) & \text{if} \; x \in (0, z_{1}],\\
\hspace{3em} \vdots & \hspace{2.5em} \vdots \\
a_{N} \sin(b F_{\nu}(x) + \gamma_{N}) & \text{if} \; x \in (z_{N-1}, 1],
\end{cases}
\end{align*}
and $\lambda = - b^{2}$, where $b, a_{1}, \ldots, a_{N}, \gamma_{1},\ldots, \gamma_{N} \in \R$ satisfy the following system of equations:
\begin{align*}
\begin{aligned}
&\alpha_{j} b a_{j+1} \cos(b F_{\nu}(z_{j}) +\gamma_{j+1}) = a_{j+1} \sin(b F_{\nu}(z_{j}) +\gamma_{j+1}) - a_{j} \sin(b F_{\nu}(z_{j}) +\gamma_{j}), \\
&\alpha_{j} b^2 a_{j} \sin(b F_{\nu}(z_{j}) +\gamma_{j}) =  a_{j} b \cos(b F_{\nu}(z_{j}) +\gamma_{j}) - a_{j+1} b \cos(b F_{\nu}(z_{j}) +\gamma_{j+1}),
\end{aligned}
\end{align*}
for $j \in \{1,\ldots,N-1\}$, and
\begin{align*}
\begin{aligned}
&\alpha_{N} b a_{1} \cos(\gamma_{1}) = a_{1} \sin(\gamma_{1}) - a_{N} \sin(b +\gamma_{N}), \\
&\alpha_{N} b^2 a_{N} \sin(b +\gamma_{N}) = a_{N} b \cos(b +\gamma_{N}) - a_{1} b \cos(\gamma_{1}).
\end{aligned}
\end{align*}
\end{thm}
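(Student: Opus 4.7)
My strategy is to first solve the eigenvalue equation on each open interval between consecutive atoms, where $\eta$ restricted to the interval coincides with the continuous measure $\nu$, and then to impose the algebraic matching conditions at the atoms coming from the operator definition $\Delta_{\eta} = -\nabla_{\eta}^{*} \circ \nabla_{\eta}$.

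On the open interval $I_{j} \coloneqq (z_{j-1}, z_{j})$, the operator $\nabla_{\eta}$ should act as differentiation with respect to the continuous distribution function $F_{\nu}$, so the eigenvalue equation $\Delta_{\eta} f = \lambda f$ translates, under the substitution $u = F_{\nu}(x)$ and $g(u) = f(F_{\nu}^{-1}(u))$, into the classical ODE $g''(u) = \lambda g(u)$. By \Cref{thm:main1} the operator $\Delta_{\eta}$ is non-positive, so we may write $\lambda = -b^{2}$ with $b \geq 0$, and every solution takes the amplitude--phase form $g(u) = a_{j}\sin(bu + \gamma_{j})$; transporting back gives $f(x) = a_{j}\sin(bF_{\nu}(x) + \gamma_{j})$ on $I_{j}$, and the left limit at $z_{j}$ fixes the value of $f$ on the half-open piece $(z_{j-1}, z_{j}]$ appearing in the statement.

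Next, at each interior atom $z_{j}$ (for $j \in \{1,\ldots,N-1\}$), the definition of $\nabla_{\eta}$ at a Dirac point identifies $\alpha_{j}(\nabla_{\eta} f)(z_{j}^{+})$ with the jump $f(z_{j}^{+}) - f(z_{j}^{-})$; substituting the sinusoidal ansatz and using continuity of $F_{\nu}$ produces exactly the first displayed equation. The second equation comes from evaluating $\Delta_{\eta} f = \lambda f$ at the atom itself: the action of $\nabla_{\eta}^{*}$ transports the weighted difference of $\nabla_{\eta} f$ across $z_{j}$ into $\alpha_{j}\lambda f(z_{j}) = -\alpha_{j} b^{2} f(z_{j})$, which, rewritten, is the second displayed equation. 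The final pair of equations at $z_{N} = 1$ is obtained by the same argument, with the understanding that the "right-hand" interval of $z_{N}$ wraps around to $(0, z_{1}]$; this cyclic identification is consistent with the cycle-graph picture developed in \cite{KSW17} for the purely atomic case and reduces the boundary relation to the stated form involving $a_{1}$ and $\gamma_{1}$.

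To close the argument, one must verify the converse: any function of the displayed shape whose parameters satisfy the full algebraic system genuinely lies in $\operatorname{dom}(\Delta_{\eta})$ and satisfies $\Delta_{\eta} f = -b^{2} f$. This amounts to checking that the matching conditions are precisely the compatibility relations placing $\nabla_{\eta} f$ in $\operatorname{dom}(\nabla_{\eta}^{*})$. The main obstacle will be the careful bookkeeping at the atoms: one has to extract the two scalar conditions per atom from the operator-theoretic definition $\Delta_{\eta} = -\nabla_{\eta}^{*} \circ \nabla_{\eta}$, in particular, identifying the pointwise action of $\nabla_{\eta}$ and $\nabla_{\eta}^{*}$ at an atom and handling the wraparound endpoint $z_{N} = 1$ so that it produces the stated asymmetric boundary relations rather than an interior-type condition.
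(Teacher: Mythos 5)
Your proposal is correct and follows essentially the same route as the paper: the sinusoidal form on each interval between atoms comes from the classical ODE (the paper invokes Picard--Lindelöf after reducing to $\nu=\Lambda$ via \Cref{rmk:KSW16}), and the algebraic system is exactly the pair of pointwise relations at each atom obtained from \eqref{eq:derivative_calculation} and \eqref{eq:derivative_calculation*} together with the left-continuity of $f$ and the right-continuity of $\nabla_{\eta}f$, with the periodic extension handling the wraparound at $z_{N}=1$. The converse you flag is handled in the paper by observing that these relations are equivalences, so no extra work is needed there.
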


\begin{rem}\label{rmk:KSW16}
To prove \Cref{thm:main1} , it is sufficient to consider the case when the continuous part $\nu$ of $\eta$  restricted to the interval between two consecutive atoms is either  zero  or Lebesgue, and to prove \Cref{thm:main2}, it is sufficient to consider the case when $\nu = \Lambda$.  Indeed, the general case follows by appropriately composing the operator with the distribution function $F_{\nu}$ as in \cite{KSW16}.
\end{rem}

\Cref{thm:main2} shows that the eigenvalues depend on the weights of the Dirac point masses and their positions relative to $\nu$, but that they are independent of the distribution of $\nu$; this condition is different than that given for the Kre\u{\i}n-Feller operator $\Delta_{\eta, \Lambda}$, where $\eta$  is a purely atomic measure, compare with \cite{MR2513598}.

Further, we investigate two leading examples in detail and determine their eigenvalues and eigenfunctions explicitly. In contrast to the classical theory and to the case of atomless measures we see that all eigenspaces are one-dimensional; however, the asymptotic growth rate of the eigenvalue counting function $N_{\eta}$ behaves as in the classical situation, namely
\begin{align*}
\displaystyle \lim_{x \to \infty} \frac{\pi N_{\eta}(x)}{\sqrt{x}} = 1.
\end{align*}  

This article is structured as follows. In \Cref{sec:general} we define the operators $\nabla_{\eta}$, $\nabla_{\eta}^{*}$ and $\Delta_{\eta}$ and prove that $\Delta_{\eta}$ is a densely defined self-adjoint operator on $L^{2}_{\mu}$.  Further, we show that $\nabla_{\eta}$ and $\nabla_{\eta}^{*}$ are closed, give an explicit description of their domains and ranges, and prove that $\Delta_{\eta}$ has compact resolvent.  From this and \Cref{rmk:KSW16} we conclude the proof of \Cref{thm:main1}.  In \Cref{sec:eigenvalues_functions} we determine spectral properties of $\Delta_{\eta}$.  We divide this section into three parts. In the first part (\Cref{sec:3_1}), we give a system of equations which allows one to obtain the eigenvalues and find a general form of the eigenfunctions, hence proving \Cref{thm:main2}. In the second part (\Cref{sec:N=1}), we solve the system of equations given in \Cref{thm:main2} for the case that $N = 1$ and illustrate the results in an example. The third part (\Cref{sec:3-2}) deals with the case when $N = 2$ and when the Dirac point masses are uniformly distributed and equally weighted.  We end this final section with an example which illustrates our results in this latter setting.

\section{The operators $\nabla_{\eta}$ and $\Delta_{\eta}$}\label{sec:general}

Let $\eta$ denote a finite Borel measure on $\R$ and let $a, b \in \R$ be such that the convex hull of $\mathrm{supp}(\eta)$ is equal to $[a, b]$.  Here $\mathrm{supp}(\eta)$ denotes the support of $\eta$, that is, the smallest compact set with full measure.  We assume that $\eta( \{ a \} ) = 0$ and set $M = (a, b]$.  For $K \subseteq M$, we let $\eta\vert_{K}$ be the restriction of $\eta$ to the set $K$, that is, $\eta\vert_{K}(A) \coloneqq \eta(A \cap K)$ for all Borel sets $A \subseteq \R$; the same notation is used for functions.  When it is clear from context, we write $\eta$ for $\eta\vert_{K}$.  We denote the set of real-valued square-$\eta$-integrable functions with domain equal to $M$ by $\mathfrak{L}^{2}_{\eta}$, define $\mathcal{N}_{\eta}$ to be the set of $\mathfrak{L}^{2}_{\eta}$-functions which are constant zero $\eta$-almost everywhere, and let $L^{2}_{\eta} \coloneqq \mathfrak{L}^{2}_{\eta} / \mathcal{N}_{\eta}$. Following convention, when we write $f \in L^{2}_{\eta}$, we mean that there exists an equivalence class of $L^{2}_{\eta}$ to which $f$ belongs. When it is clear from context, we will use the same notation for a function $f \in \mathfrak{L}^{2}_{\eta}$ and for the equivalence class in $L^{2}_{\eta}$ to which it belongs.  We equip $L^{2}_{\eta}$ with the inner product given by 
\begin{align*}
\langle f, g \rangle_{\eta} \coloneqq \int  f  g \, \mathrm{d}\eta.
\end{align*}
We denote by $\lVert \cdot \rVert_{\eta}$ the associate $L^{2}_{\eta}$-norm.  For $d \in \R$ we set $d \Z \coloneqq \{ d k \colon k \in \Z \}$ and let $\delta_{d \Z}$ denote the Dirac comb $\delta_{d \Z} \coloneqq \sum_{k \in \Z} \delta_{d k}$.  Here, for $z \in \R$, we write $\delta_z$ for the Dirac point mass at $z$.  For a function $f \colon M \to \R$ we let $\mathbf{f} \colon \R \to \R$ be the periodic extension of $f$, that is $\mathbf{f}(x) = f(x)$ for all $x \in M$ and $\mathbf{f}(x) = \mathbf{f}(x + (b-a) k)$ for all $x \in \R$ and $k \in \Z$.  We define the set $\mathscr{D}_{\eta}^{1}$ of \textsl{$\eta$-differentiable functions} by
\begin{align}\label{Def:D1line}
\mathscr{D}_{\eta}^{1} \coloneqq \left\{ f \in \mathfrak{L}^2_{\eta} \colon \text{there exists} \; f' \in L^2_{\eta}  \; \text{with} \; \mathbf{f}(x) = \mathbf{f}(y) + \int \mathds{1}_{[y,x)} \mathbf{f}^{\prime} \, \mathrm{d}\eta * \delta_{(b-a)\Z} \; \text{for all} \; x, y \in \R \; \text{with} \; y < x \right\}.
\end{align}
Here $\eta * \delta_{(b-a)\Z}$ denotes the convolution of the measure $\eta$ and the Dirac comb $\delta_{(b-a)\Z}$; see, for instance, \cite{Halmos:1950} for the definition of and results on convolutions of measures.  

If $f \in \mathscr{D}_{\eta}^{1}$, then $f$ is left-continuous with discontinuities occurring only in a subset of $\{ z_{1}, \dots, z_{N} \}$.  As the function $f'$ defined in \eqref{Def:D1line} is unique in $L^2_{\eta}$, the operator $\nabla_{\eta} \colon \mathscr{D}_{\eta}^{1} \to L^2_{\eta}$ given by $\nabla_{\eta}f \coloneqq f'$ is well-defined and called the \textsl{${\eta}$-derivative}.  By the linearity of the integral equation in \eqref{Def:D1line}, it follows that $\nabla_{\eta}$ is linear.  Additionally, if $f, g \in \mathscr{D}_{\eta}^{1}$ with $f \neq g$, then $\lVert f - g \rVert_{\eta} \neq 0$.  Thus, we may view $\mathscr{D}_{\eta}^{1}$ as a collection of real-valued square-$\eta$-integrable functions, or as a collection of equivalence classes of $L^{2}_{\eta}$ and in the latter setting, we define $\nabla_{\eta}$ accordingly; namely, if $f \in \mathscr{D}_{\eta}^{1}$, then $\nabla_{\eta}$ maps the equivalence class of $f$ to the equivalence class of $\nabla_{\eta}f$.

\begin{rem}\label{rmk:dense_inclusions_functions_spaces}
Let $C(M)$ denote the set of continuous functions $f \colon M \to \R$ and let $\mathcal{C}^1(M)$ denote the set of $f  \in \mathcal{C}(M)$ such that $f$ is differentiable on $(a, b)$ and left-differentiable at $b$. In the case that $\eta$ is a continuous  Borel measure, the set $\mathscr{D}_{\eta}^{1}$ given in \eqref{Def:D1line} is contained in $C(M)$ and equal to the set $\mathscr{D}^{\eta}_{1}$ given in \cite{FZ02}, which, if $\eta=\Lambda$, is in turn equal to the Sobolev space $W_{2}^{1}$.  Moreover, an application of the fundamental theorem of calculus yields that $\mathcal{C}^{1}(M)$ is contained in $\mathscr{D}_{\Lambda}^{1}$. Thus, we have $\mathcal{C}^1(M) \subseteq  \mathscr{D}_{\Lambda}^{1} \subseteq \mathcal{C}(M) \subseteq L_{\Lambda}^{2}$, where each set is dense with respect to $\lVert \cdot \rVert_{\Lambda}$, in the succeeding one.
\end{rem}

\begin{rem}\label{rmk:KSW16_2}
By \Cref{rmk:KSW16}, and a rescaling and translation argument, it is sufficient to prove \Cref{thm:main1} under the following assumptions.  There exists $N \in \N$ and $\underline{c} = (c_{1}, \dots, c_{N}) \in \{ 0, 1\}^{N}$ with $\underline{c} \neq \underline{0}$, such that 
\begin{enumerate}
\item $a = 0$ and $b = 1$, in which case $M = \I \coloneqq (0,1]$, and
\item $\eta = \Gamma + \sum_{i=1}^{N}\alpha_i \delta_{z_i}$, where $0 < z_{1} < \cdots < z_{N} = 1$ and $\mathrm{d}\Gamma = (\sum_{i=1}^{N} c_{i} \mathds{1}_{(z_{i-1}, z_{i}]}) \mathrm{d}\Lambda$.
\end{enumerate}
Thus, throughout this section we assume that $\eta$ has this form.
\end{rem}

For convenience, we  set $z_{N + 1}\coloneqq1 + z_{1}$ and $A_{i} \coloneqq (z_{i-1},z_{i}]$, for $i \in \{ 1, \ldots, N \}$. Given a bounded interval $J$, following convention, we let $J^{\textup{o}}$ denote the interior of $J$. Further, we let $\mathds{1}_{J}$ denote the characteristic function on $J$, and in the case that $J = \I$, we write $\mathds{1}$ for $\mathds{1}_{J}$.

As with the classical weak Laplacian, $\nabla_{\eta}f$ reflects local properties of $f$.  Indeed, we have, for  $i \in \{ 1, \ldots, N \}$,
\begin{align}\label{eq:derivative_calculation}
\nabla_{\eta} f(z_i) = \lim_{\varepsilon \searrow 0} \frac{\mathbf{f}(z_i + \varepsilon) - \mathbf{f}(z_i)}{\alpha_i}
\quad \text{and} \quad
\nabla_{\eta} f(x) = f_{\Gamma}^{\prime} (x),
\end{align}
where $x \in A_{i}^{\textup{o}}$ and $f_{\Gamma}^{\prime} \in L^2_{\Gamma}$ is such that
\begin{align*}
\mathbf{f}(x) =  \lim_{\varepsilon \searrow 0} \mathbf{f}(z_i + \varepsilon) + \int \mathds{1}_{[z_i, x)} \mathbf{f}_{\Gamma}^{\prime} \, \mathrm{d}{\Gamma}.
\end{align*}
If $c_{i}=1$, then $f_{\Gamma}^{\prime} \rvert_{A_{i}^{\textup{o}}}$ coincides with the weak derivative, and if $c_{i} = 0$, then $f_{\Gamma}^{\prime} \rvert_{A_{i}^{\textup{o}}}$ can be chosen arbitrarily.
\begin{prop}\label{prop:D1_dense_in_L2}
The set $\mathscr{D}_{\eta}^{1}$ is dense in $L_{\eta}^2$ with respect to $\lVert \cdot \rVert_{\eta}$.
\end{prop}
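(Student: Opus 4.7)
The plan is to exploit the orthogonal decomposition
\begin{equation*}
L^{2}_{\eta} \;=\; \bigoplus_{i\,:\,c_{i}=1} L^{2}(A_{i}^{\textup{o}}, \Lambda) \;\;\oplus\;\; \bigoplus_{i=1}^{N} \R\cdot \mathds{1}_{\{z_{i}\}},
\end{equation*}
which is valid because the absolutely continuous part $\Gamma$ of $\eta$ and its atomic part $\sum_{i}\alpha_{i}\delta_{z_{i}}$ are mutually singular. Since $\mathscr{D}_{\eta}^{1}$ is a linear subspace, density in $L^{2}_{\eta}$ will follow once each summand can be approximated in $\lVert\cdot\rVert_{\eta}$ by elements of $\mathscr{D}_{\eta}^{1}$, and the two approximations are then summed.

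For each Lebesgue summand (i.e., $c_{i}=1$), I would verify $C^{\infty}_{c}(A_{i}^{\textup{o}}) \subseteq \mathscr{D}_{\eta}^{1}$. Given $\varphi \in C^{\infty}_{c}(A_{i}^{\textup{o}})$, extend by $0$ on $\I$ and periodically to $\R$, and take as candidate $\eta$-derivative the classical derivative $\varphi'$ on $A_{i}^{\textup{o}}$ together with the value $0$ at every atom and on all other subintervals. On $A_{i}^{\textup{o}}$ the identity \eqref{Def:D1line} reduces to the fundamental theorem of calculus, while for any $y<x$ whose interval picks up atoms or crosses other subintervals, both sides of \eqref{Def:D1line} vanish because $\varphi$ is compactly supported in $A_{i}^{\textup{o}}$. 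The classical density of $C^{\infty}_{c}(A_{i}^{\textup{o}})$ in $L^{2}(A_{i}^{\textup{o}},\Lambda)$ then handles this summand.

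For each atomic summand $\R\cdot\mathds{1}_{\{z_{i}\}}$, I would split cases by $c_{i}$. If $c_{i}=0$, then $\mathds{1}_{\{z_{i}\}} = \mathds{1}_{A_{i}}$ in $L^{2}_{\eta}$ (they differ only on an $\eta$-null set), and a direct check shows $\mathds{1}_{A_{i}} \in \mathscr{D}_{\eta}^{1}$ by taking as candidate derivative $\nabla_{\eta}\mathds{1}_{A_{i}}(z_{i-1}) = 1/\alpha_{i-1}$, $\nabla_{\eta}\mathds{1}_{A_{i}}(z_{i}) = -1/\alpha_{i}$ and $0$ elsewhere, so that the jumps at $z_{i-1}$ and $z_{i}$ are accounted for by the corresponding atomic contributions in \eqref{Def:D1line}. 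If $c_{i}=1$, I would instead use the continuous ramp
\begin{equation*}
f_{\varepsilon}(x) \;\coloneqq\; \max\!\bigl(0,\; 1-(z_{i}-x)/\varepsilon\bigr)\cdot\mathds{1}_{A_{i}}(x),\qquad 0 < \varepsilon < z_{i}-z_{i-1},
\end{equation*}
with candidate derivative $1/\varepsilon$ on $(z_{i}-\varepsilon, z_{i})$, $-1/\alpha_{i}$ at $z_{i}$, and $0$ elsewhere; the Lebesgue contribution $+1$ from integrating the ramp and the atomic contribution $-1$ at $z_{i}$ cancel across one period, and a direct calculation gives $\lVert f_{\varepsilon}-\mathds{1}_{\{z_{i}\}}\rVert_{\eta}^{2} = \varepsilon/3 \to 0$ as $\varepsilon \searrow 0$.

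The main obstacle I foresee is the bookkeeping needed to verify \eqref{Def:D1line} on all of $\R$ and not merely on $\I$, because of the convolution with the Dirac comb $\delta_{\Z}$. This is delicate when $i=N$, since the periodic extension $\mathbf{f}_{\varepsilon}$ (or $\mathds{1}_{A_{N}}$) then has a jump at the period boundary $z_{N}=1$; this jump is accounted for precisely by the atom at $z_{N}$, which $\delta_{\Z}$ reproduces at every integer. Once the single-period identity is in place and the periodicity of $\mathbf{f}_{\varepsilon}$ and its derivative is exploited, the multi-period identity reduces to a finite telescoping sum over the translates appearing in $\eta * \delta_{\Z}$, completing the proof.
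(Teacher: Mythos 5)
Your argument is correct, and it takes a genuinely different route from the paper's. The paper builds an explicit total family by hand: smoothed sine and cosine waves $f_{\varepsilon,n}, g_{\varepsilon,n}$ vanishing near the endpoints of each $A_{i}$ to exhaust the Lebesgue components, plus cosine bumps $h_{n}$ for the atoms, the latter requiring a four-way case analysis in $(c_{i}, c_{i+1})$ and, when $c_{i}=1$ and $c_{i+1}=0$, an auxiliary construction that carries the value $1$ across the whole block of intervals with $c_{j}=0$ up to the next Lebesgue interval. You instead use the orthogonal decomposition of $L^{2}_{\eta}$ coming from the mutual singularity of $\Gamma$ and the atomic part, invoke the classical density of $C^{\infty}_{c}(A_{i}^{\textup{o}})$ in $L^{2}(A_{i}^{\textup{o}},\Lambda)$, and approximate each $\mathds{1}_{\{z_{i}\}}$ by an indicator or a linear ramp whose downward jump at $z_{i}^{+}$ is absorbed by the atomic term $\alpha_{i}\nabla_{\eta}f(z_{i})$ in \eqref{Def:D1line}. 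This buys you independence from $c_{i+1}$ --- your approximant drops to zero immediately after the atom no matter what lies to its right --- which eliminates the paper's most delicate case entirely; the only cost is the (finite, hence harmless) direct-sum bookkeeping. Two small points to tighten. First, for $y$ inside $\mathrm{supp}(\varphi)$ and $x$ beyond $z_{i}$ the two sides of \eqref{Def:D1line} both equal $-\varphi(y)$ rather than vanish, so the verification should be phrased via additivity of both sides under concatenation of intervals, reducing to pieces that lie either inside $A_{i}^{\textup{o}}$ or outside $\mathrm{supp}(\varphi)$. Second, for $i=1$ the symbols $z_{i-1}$ and $\alpha_{i-1}$ must be read cyclically: the upward jump of $\mathds{1}_{A_{1}}$ at $0^{+}$ is accounted for by the atom at $z_{N}=1$, which the Dirac comb reproduces at $0$ --- exactly the period-boundary mechanism you already describe. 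Neither point affects the validity of the proof.
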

\begin{proof}
For $i \in \{1,\ldots,N\}$ we show, for given $f \in L_{\Lambda}^{2}$ and $\varepsilon > 0$, that there exists $g \in \mathscr{D}_{\eta}^{1}$ with $\lVert f - g \rVert_{\eta} < \varepsilon$. Following this, for $i \in \{1,\ldots,N\}$, we construct a sequence $(h_{n})_{n \in \N}$ in $\mathscr{D}_{\eta}^{1}$ with $\lim_{n \to \infty} \lVert h_{n} - \mathds{1}_{\{ z_{i} \}} \rVert_{\eta} = 0$.
 
Let us begin by showing the first statement for a fixed $i \in \{1, \ldots, N-1 \}$; the case $i=N$ follows by a similar argument and is left to the reader.  For $0 < \varepsilon < (z_{i+1} - z_{i})/2$, $n \in \N_{0}$ and $x \in (0,1]$ set $c_{\varepsilon, n} \coloneqq \pi n/(z_{i+1}-z_{i}-2\varepsilon)$ and $f_{\varepsilon,n}(x) \coloneqq \int_{0}^{x}f^{\prime}_{\varepsilon,n}  \, \mathrm{d}{\Lambda}$, where  
\begin{align*}
f^{\prime}_{\varepsilon,n}(x) &\coloneqq \begin{cases}
-(32 c_{\varepsilon, n}/\varepsilon) \left( x - \left( z_{i} + \varepsilon/4 \right) \right) &\text{if} \; x \in ( z_{i} +\varepsilon/4, z_{i} + 3\varepsilon/8 ],\\
(32 c_{\varepsilon, n}/\varepsilon) \left( x - \left( z_{i} + \varepsilon/2 \right) \right) &\text{if} \; x \in ( z_{i} + 3\varepsilon/8, z_{i} + \varepsilon/2 ],\\
-c_{\varepsilon, n} \cos \left( (2 \pi/\varepsilon) \, \left(x - \left(z_{i} + \varepsilon/2 \right) \right)\right) + c_{\varepsilon, n} &\text{if} \; x \in (z_{i} + \varepsilon/2, z_{i} + \varepsilon ],\\
2 c_{\varepsilon, n} \cos \left( 2 c_{\varepsilon, n} (x - (z_{i} +\varepsilon) ) \right) &\text{if} \; x \in ( z_{i} + \varepsilon, z_{i+1} - \varepsilon ],\\
c_{\varepsilon, n}  \cos \left( (2 \pi/\varepsilon) \, \left(x - \left(z_{i + 1} - \varepsilon \right) \right) \right) + c_{\varepsilon, n} &\text{if} \; x \in (z_{i+1} - \varepsilon, z_{i+1} - \varepsilon/2 ],\\
-(32 c_{\varepsilon, n}/\varepsilon) \left( x - \left( z_{i+1} - \varepsilon/2 \right) \right) &\text{if} \; x \in ( z_{i+1} - \varepsilon/2, z_{i+1} - 3\varepsilon/8 ],\\
(32c_{\varepsilon, n}/\varepsilon) \left( x - \left( z_{i+1} - \varepsilon/4 \right) \right) &\text{if} \; x \in ( z_{i+1} - 3\varepsilon/8, z_{i+1} - \varepsilon/4 ], \\
0 &\text{otherwise}.
\end{cases}
\intertext{By definition, $f_{\varepsilon,n} \in \mathscr{D}_{\eta}^{1}$, $f_{\varepsilon,n}(x) = 0$, for $x \in \I\setminus A_{i}^{o}$, and $f_{\varepsilon,n}(x) = \sin \left(2c_{\varepsilon, n} (x - (z_{i} +\varepsilon) ) \right)$, for $x \in (z_{i} + \varepsilon, z_{i+1} - \varepsilon)$. Further, set}
g_{\varepsilon,n} (x) &\coloneqq \begin{cases}
-(1/2) \cos \left( (2 \pi/\varepsilon) \left(x - \left(z_{i} +\varepsilon/2 \right) \right)  \right) + 1/2 &\text{if} \; x \in (z_{i} + \varepsilon/2, z_{i} + \varepsilon ],\\
\cos \left( 2c_{\varepsilon, n} (x - (z_{i} +\varepsilon) ) \right) &\text{if} \; x \in ( z_{i} + \varepsilon, z_{i+1} - \varepsilon ], \\
(1/2) \cos \left( (2 \pi/\varepsilon) \, (x - (z_{i+1} - \varepsilon))  \right) + 1/2 &\text{if} \; x \in ( z_{i+1} - \varepsilon, z_{i+1} - \varepsilon/2 ],\\
0 &  \text{otherwise}.
\end{cases}
\end{align*}
Notice, $g_{\varepsilon,n} \in \mathscr{D}_{\eta}^{1}$, $g_{\varepsilon,n}(x) = 0$, for $x \in \I\setminus A_{i}^{o}$, and $g_{\varepsilon,n}(x) = \cos \left( 2c_{\varepsilon, n} (x - (z_{i} +\varepsilon) ) \right)$, for $x \in (z_{i} + \varepsilon, z_{i+1} - \varepsilon)$.

The functions $f_{\varepsilon,n}$ and $g_{\varepsilon,n}$ are differentiable (in the classical sense) and hence, by \Cref{rmk:dense_inclusions_functions_spaces}, lie in $\mathscr{D}_{\eta\vert_{A_{i}}}^{1}$. For $i \in \{ 1, \ldots, N\}$, define $\mathcal{F}_{i} \coloneqq \{ f_{\varepsilon,n}, g_{\varepsilon,n} \colon 0 < \varepsilon < (z_{i+1} - z_{i})/2 \; \text{and} \; n \in \N_{0} \}$. From the fact that the set $\{ x \mapsto \sin(2 \pi n x/\Lambda(A_{i})) \colon n \in \N \} \cup \{ x \mapsto \cos(2 \pi n x/ \Lambda(A_{i})) \colon n \in \N_{0} \}$ forms a basis of $L_{\Lambda\rvert_{A_{i}}}^{2}$, it follows that the span of $\bigcup_{i=1}^{N} \mathcal{F}_{i} \subseteq \mathscr{D}_{\eta}^{1}$ is a dense subset of $L_{\Lambda}^2$ with respect to $\lVert \cdot \rVert_{\Lambda}$. 

For the second statement, again let $i \in \{ 1, \ldots, N-1\}$ be fixed; the case $i=N$ follows by a similar argument and is left to the reader. Let $K$ be the smallest natural number with $1/K < \min\{ z_{i} - z_{i-1} \colon i \in \{1,\ldots,N\} \}$.  We divide the proof into four cases, namely when $c_{i}$ and $c_{i+1}$ are zero or one.

In the case that the atom is isolated, that is $c_{i} = c_{i+1} = 0$, the function $h \coloneqq \mathds{1}_{A_{i}}$ lies in $\mathscr{D}_{\eta}^{1}$ and $\displaystyle \lVert h - \mathds{1}_{\{ z_{i} \}} \rVert_{\eta} = 0$. For the cases when $c_{i+1} = 1$, we set, for $n \in \N$ with $n \geq K$,
\begin{align*}
h_{n}(x) &\coloneqq \begin{cases}
(1-\cos(n\pi(x-(z_{i} -1/n)))) / 2 &
\parbox{16em}{if $c_{i} = 1$ and $x \in (z_{i} - 1/n,z_{i} + 1/n )$, or\\
if $c_{i} = 0$ and $x \in (z_{i},z_{i} + 1/n )$,
}\\[0.5em]
1 &\text{if} \; c_{i} = 0 \; \text{and} \; x \in (z_{i-1} ,z_{i}],\\
0 &\text{otherwise}.
\end{cases}
\end{align*}
Observe that $\displaystyle \lim_{n \to \infty} \lVert h_{n} - \mathds{1}_{\{ z_{i} \}} \rVert_{\eta} = 0$ and moreover, that $h_{n} \in \mathscr{D}_{\eta}^{1}$.

Finally, we consider the case $c_{i} = 1$ and $c_{i+1} = 0$.  For this let $j \in \{i + 2, \ldots, N \}$ be the smallest such integer with $c_{j} = 1$. If no such $j$ exists then let $j \in \{1, \ldots, i \}$ be the smallest such integer with $c_{j} = 1$.  Observe that in the first case it is sufficient to approximate $\mathds{1}_{\{z_{i}, \dots, z_{j}\}}$ and, in the second case, $\mathds{1}_{\{z_{1}, \dots, z_{j}, z_{i}, \dots, z_{N}\}}$.  Here we prove the former of these two cases as the latter follows analogously. For $n \in \N$ with $n \geq K$, set
\begin{align*}
h_{n}(x) &\coloneqq \begin{cases}
(1-\cos(n\pi(x-(z_{i} - 1/n)))) / 2 &\text{if} \; x \in (z_{i} - 1/n, z_{i} ],\\
(1-\cos(n\pi(x-(z_{j} +1/n)))) / 2 &\text{if} \; x \in (z_{j},z_{j} + 1/n ],\\
1 &\text{if} \; x \in (z_{i} ,z_{j}],\\
0 &\text{otherwise}.
\end{cases}
\end{align*}
By definition, we have $\displaystyle \lim_{n \to \infty} \lVert h_{n} - \mathds{1}_{\{ z_{i}, \ldots, z_{j} \}} \rVert_{\eta} = 0$ and $h_{n} \in \mathscr{D}_{\eta}^{1}$.
\end{proof}

From \eqref{Def:D1line}, if $f \in \mathscr{D}_{\eta}^{1}$, then
\begin{align}\label{eq:int_deriv_0}
\langle \nabla_{\eta} f, \mathds{1} \rangle_{\eta} = \int \nabla_{\eta}f \, \mathrm{d}\eta = 0,
\end{align}
which implies that the zero function is the only constant function which can occur as an $\eta$-derivative. 

Letting $\varrho$ denote the (natural) quotient map from $L_{\eta}^{2}$ to $L_{\eta}^{2} / \{ c \mathds{1} \colon c \in \R \}$, in the following proposition we show that the image of the range of $\nabla_{\eta}$ under $\varrho$ is dense in $L_{\eta}^{2} / \{ c \mathds{1} \colon c \in \R \}$.  By the continuity of the inner product $\langle \cdot, \cdot \rangle_{\eta}$, this implies that the orthogonal complement of the range of $\nabla_{\eta}$ is equal to the set of constant functions on $\I$.

\begin{prop}\label{prop:range_nabla_eta}
The image under $\varrho$ of the range of $\nabla_{\eta}$ is dense in the quotient space $L_{\eta}^{2} / \{ c \mathds{1} \colon c \in \R \}$.
\end{prop}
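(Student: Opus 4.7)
The plan is to prove the stronger claim that $\mathrm{Range}(\nabla_{\eta}) = \{g \in L_\eta^2 \colon \langle g, \mathds{1}\rangle_\eta = 0\}$, from which the proposition follows: by \eqref{eq:int_deriv_0} the range is already contained in this orthogonal complement, which in turn is mapped bijectively onto $L_\eta^2/\{c\mathds{1}\colon c\in\R\}$ by $\varrho$; equality of ranges yields not merely density but surjectivity of $\varrho\circ\nabla_\eta$.

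To construct a preimage, I would fix $g\in L_\eta^2$ with $\langle g,\mathds{1}\rangle_\eta = 0$. Since each $A_i^{\textup{o}}$ with $c_i = 0$ is $\eta$-null, I may replace $g$ by a representative that vanishes on $\bigcup_{i : c_i = 0} A_i^{\textup{o}}$ without leaving its equivalence class. Then set
\[
f(x) \coloneqq \int_{(0,x)} g \, \mathrm{d}\eta, \qquad x \in \I.
\]
The bulk of the work is verifying that $f \in \mathscr{D}_{\eta}^{1}$ with $\nabla_{\eta} f = g$.

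First, Cauchy--Schwarz gives $|f(x)| \leq \|g\|_\eta\sqrt{\eta(\I)}$, so $f \in \mathfrak{L}_\eta^2$. Next, $f$ is left-continuous on $\I$ with jump $f(z_i^+) - f(z_i) = \alpha_i g(z_i)$ at each atom, and on each $A_i^{\textup{o}}$ with $c_i = 1$ it is an absolutely continuous antiderivative of $g|_{A_i^{\textup{o}}}$ with respect to Lebesgue measure; by \eqref{eq:derivative_calculation} these two observations identify $\nabla_\eta f$ with $g$ pointwise on $\I$. It remains to check the integral equation in \eqref{Def:D1line}: for $0<y<x\leq 1$ the split $(0,x) = (0,y] \sqcup (y,x)$ gives $f(x) - f(y) = \int_{[y,x)} g \, \mathrm{d}\eta$, which matches the right-hand side since $\eta * \delta_{\Z}$ coincides with $\eta$ on $(0,1)$.

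The only delicate case is $y \leq 0 < x \leq 1$: here periodicity forces $\mathbf{f}(0) = f(1)$, and $\eta * \delta_{\Z}$ restricted to $[0,x)$ carries an additional atom of mass $\alpha_N$ at $0$, inherited from the atom of $\eta$ at $1 = z_N$. Writing out both sides of \eqref{Def:D1line} and using that $\lim_{\varepsilon \searrow 0} f(\varepsilon) = 0$, the compatibility reduces precisely to $\int g \, \mathrm{d}\eta = 0$, our standing assumption. The remaining cases $y<0$ then follow from the $\Z$-translation invariance of \eqref{Def:D1line}, as the convolution and the periodic extensions of $f$ and $g$ are all invariant under integer shifts. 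I expect the principal obstacle to be exactly this period-boundary bookkeeping, where the orthogonality $\langle g,\mathds{1}\rangle_\eta = 0$ plays the role of absorbing the lone additional atom of $\eta * \delta_{\Z}$ picked up across the boundary; every other step is a routine measure-theoretic manipulation.
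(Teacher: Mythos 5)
Your argument is correct and rests on exactly the same construction as the paper's proof, namely the $\eta$-antiderivative $f(x) = \langle g, \mathds{1}_{[0,x)}\rangle_{\eta}$, but you run it on a larger class of functions and therefore prove more. The paper first replaces an arbitrary mean-zero element of $L^{2}_{\eta}$ by a nearby $g \in \mathcal{C}_{B}(\I)$ with $\langle g, \mathds{1}\rangle_{\eta} = 0$, checks that the antiderivative of such a $g$ lies in $\mathscr{D}_{\eta}^{1}$, and deduces density of $\varrho(\operatorname{Range}(\nabla_{\eta}))$ from density of this subclass. You skip the approximation step and verify directly that the antiderivative of \emph{any} mean-zero $g \in L^{2}_{\eta}$ lies in $\mathscr{D}_{\eta}^{1}$; nothing in the verification actually requires continuity of $g$ --- boundedness of $f$ comes from Cauchy--Schwarz and finiteness of $\eta$, left-continuity and the jump $\alpha_{i}g(z_{i})$ at each atom come from dominated convergence, and the lone wrap-around atom of $\eta * \delta_{\Z}$ at the period boundary is absorbed precisely by $\langle g, \mathds{1}\rangle_{\eta} = 0$, as you correctly identify. (Your appeal to \eqref{eq:derivative_calculation} before establishing membership in $\mathscr{D}_{\eta}^{1}$ is only a matter of presentation order, since the integral equation in \eqref{Def:D1line} is what you actually verify.) The payoff of your route is the sharper statement $\operatorname{Range}(\nabla_{\eta}) = \{ g \in L^{2}_{\eta} \colon \langle g, \mathds{1}\rangle_{\eta} = 0\}$, so that $\varrho \circ \nabla_{\eta}$ is surjective rather than merely of dense range; the cost is the period-boundary bookkeeping, which the paper's restriction to a dense subclass does not avoid either. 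Both arguments establish the proposition.
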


\begin{proof}
The set of bounded continuous functions $\mathcal{C}_{B}(\I)$ with domain equal to $\I$ is dense in $L_{\eta}^{2}$, and so the image of $E \coloneqq \{ g \in \mathcal{C}_{B}(\I) \colon \langle g, \mathds{1} \rangle_{\eta} = 0 \}$ under $\varrho$ is dense in $L_{\eta}^{2} / \{ c \mathds{1} \colon c \in \R \}$.  For $g \in E$, setting $f(x) = \langle g, \mathds{1}_{[0,x)} \rangle_{\eta}$ for  $x \in \I$, observe that $f$ is left-continuous and bounded, and so $f \in L_{\eta}^{2}$.  Since $g \in E$, by \eqref{eq:int_deriv_0}, we have $f \in \mathscr{D}_{\eta}^{1}$, where $\nabla_{\eta} f = g$.  In other words, $E$ is contained in the range of $\nabla_{\eta}$ and hence the image of the range of $\nabla_{\eta}$ under $\varrho$ is dense in $L_{\eta}^{2} / \{ c \mathds{1} \colon c \in \R \}$.
\end{proof}

As in \cite{KSW17, KL01}, we use a Dirichlet form $\E_{\eta}$ to define the measure geometric Laplacian $\Delta_{\eta}$.  For this we use the following properties of unbounded operators; see for instance \cite{RS81} for further details. The \textsl{graph of a densely defined linear operator $T$} on $L^{2}_{\eta}$ is $\Gamma(T) \coloneqq \{ (f, T(f)) \in L^{2}_{\eta} \times L^{2}_{\eta} \colon f \in \textup{Dom}(T) \}$, where $\textup{Dom}(T)$ denotes the domain of $T$.  In the case that $\Gamma(T)$ is closed in $L^{2}_{\eta} \times L^{2}_{\eta}$, we say that $T$ is \textsl{closed}. If $T_1$ is a densely defined operator on $L^{2}_{\eta}$ and if $\Gamma(T_1) \supseteq \Gamma(T)$, then $T_1$ is called an \textsl{extension of $T$}. When $T$ has a closed extension, $T$ is said to be \textsl{closable}.  The smallest closed extension of $T$, denoted by $\overline{T}$, is the \textsl{closure} of $T$.

For a densely defined operator $T$ on $L^{2}_{\eta}$ we let $\textup{Dom}(T^*)$ be the set of $f \in L^{2}_{\eta}$ for which there exists $h \in L^{2}_{\eta}$ with $\langle T(g), f \rangle_{\eta} = \langle g, h \rangle_{\eta}$ for all $g \in \textup{Dom}(T)$.  For each such $f \in \textup{Dom}(T^*)$, we define $T^{*}(f) \coloneqq h$.  We refer to $T^{*}$ as the \textsl{adjoint} of $T$.  We call $T$ \textsl{symmetric} if $\textup{Dom}(T) \subseteq \textup{Dom}(T^{*})$ and $T(f) = T^{*}(f)$, for all $f \in \textup{Dom}(T)$.  Equivalently, $T$ is symmetric if and only if $\langle T(f), g \rangle_{\eta} = \langle f, T(g) \rangle_{\eta}$ for all $f, g \in \textup{Dom}(T)$.  If in addition to $T$ being symmetric, we have that $\textup{Dom}(T) = \textup{Dom}(T^*)$, then we say that $T$ is \textsl{self-adjoint}.

\begin{thm}[\cite{RS81}]\label{thm:Reed_Simon_VIII.3}
If $T$ is an unbounded, densely defined operator on $L^{2}_{\eta}$, then the following holds.
\begin{enumerate}
\item The operator $T^*$ is closed.
\item The operator $T$ is closable if and only if $\textup{Dom}(T^*)$ is dense in $L_{\eta}^{2}$ in which case $\overline{T} = T^{**}$.
\item If $T$ is closable, then $( \overline{T} )^* = T^*$.
\end{enumerate}
\end{thm}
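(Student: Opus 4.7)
\textbf{Proof plan for \Cref{thm:Reed_Simon_VIII.3}.}
My plan is the classical graph argument in $L_{\eta}^{2} \times L_{\eta}^{2}$. I work with the unitary involution $J \colon L_{\eta}^{2} \times L_{\eta}^{2} \to L_{\eta}^{2} \times L_{\eta}^{2}$ defined by $J(f, g) \coloneqq (-g, f)$; this satisfies $J^{2} = -\mathrm{Id}$, and being unitary it preserves closures and satisfies $J(A^{\perp}) = (JA)^{\perp}$ for every $A \subseteq L_{\eta}^{2} \times L_{\eta}^{2}$. Unwinding the defining identity $\langle T g, f \rangle_{\eta} = \langle g, h \rangle_{\eta}$ of $T^{*}$ yields the single structural formula that drives everything:
\begin{equation*}
\Gamma(T^{*}) \;=\; \bigl( J \, \Gamma(T) \bigr)^{\perp},
\end{equation*}
since $(f, h) \perp J(g, Tg) = (-Tg, g)$ for all $g \in \textup{Dom}(T)$ expresses exactly that equality. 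Claim (i) is then an immediate corollary, because orthogonal complements are closed.

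For (ii), assume first that $\textup{Dom}(T^{*})$ is dense, so that $T^{**}$ exists. Applying the above identity with $T^{*}$ in place of $T$, and using $J^{2}\Gamma(T) = -\Gamma(T) = \Gamma(T)$ (a linear subspace is fixed by negation), gives
\begin{equation*}
\Gamma(T^{**}) \;=\; \bigl( J \, \Gamma(T^{*}) \bigr)^{\perp} \;=\; \bigl( J \, (J\,\Gamma(T))^{\perp} \bigr)^{\perp} \;=\; \Gamma(T)^{\perp\perp} \;=\; \overline{\Gamma(T)}.
\end{equation*}
Since $T^{**}$ is an operator, $\overline{\Gamma(T)}$ is the graph of an operator, so $T$ is closable and $\overline{T} = T^{**}$. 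Conversely, assume $T$ is closable and suppose $g$ is orthogonal to $\textup{Dom}(T^{*})$; then $(g, 0) \perp \Gamma(T^{*})$, so
\begin{equation*}
(g, 0) \;\in\; \Gamma(T^{*})^{\perp} \;=\; \overline{J\,\Gamma(T)} \;=\; J\,\overline{\Gamma(T)}.
\end{equation*}
Applying $J^{-1}$ gives $(0, -g) \in \overline{\Gamma(T)} = \Gamma(\overline{T})$, and closability forces $g = \overline{T}(0) = 0$; hence $\textup{Dom}(T^{*})$ is dense.

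Claim (iii) follows from a direct limiting argument. Since $\overline{T} \supseteq T$ one immediately has $(\overline{T})^{*} \subseteq T^{*}$; conversely, for $f \in \textup{Dom}(T^{*})$ and $g \in \textup{Dom}(\overline{T})$ with approximating sequence $(g_{n}) \subseteq \textup{Dom}(T)$,
\begin{equation*}
\langle \overline{T} g, f \rangle_{\eta} \;=\; \lim_{n \to \infty} \langle T g_{n}, f \rangle_{\eta} \;=\; \lim_{n \to \infty} \langle g_{n}, T^{*} f \rangle_{\eta} \;=\; \langle g, T^{*} f \rangle_{\eta},
\end{equation*}
so $f \in \textup{Dom}((\overline{T})^{*})$ with $(\overline{T})^{*} f = T^{*} f$. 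The step I expect to require the most care is the converse direction of (ii), where the density of $\textup{Dom}(T^{*})$ has to be extracted from the abstract fact that $\overline{\Gamma(T)}$ is a graph; all of the work is concealed in correctly tracking $J$, $J^{-1}$, and the interaction of closures with orthogonal complements.
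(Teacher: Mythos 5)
Your proof is correct and complete. The paper does not actually prove this statement---it is imported verbatim from Reed--Simon \cite{RS81}---and your graph argument via the unitary $J(f,g)=(-g,f)$ and the identity $\Gamma(T^{*})=\bigl(J\,\Gamma(T)\bigr)^{\perp}$ is precisely the classical proof given in that reference (your only slip is terminological: $J$ is not an involution, since $J^{2}=-\mathrm{Id}$, though you use the correct identity throughout).
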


In the following proposition we show that the domain of $\nabla_{\eta}^{*}$ is equal to
\begin{align}\label{eq:adjoint_domain}
\mathscr{D}_{\eta}^{1*} \coloneqq \left\{ f \in \mathfrak{L}^2_{\eta} \colon \text{there exists} \; f^{*} \in L^2_{\eta}  \; \text{with} \; \mathbf{f}(x) = \mathbf{f}(y) + \int \mathds{1}_{(x, y]} \mathbf{f}^{*} \, \mathrm{d}\eta * \delta_{\Z} \; \text{for all} \; x, y \in \R \; \text{with} \; x < y \right\}.
\end{align}
Notice, if $f \in \mathscr{D}_{\eta}^{1*}$, then $f$ is right-continuous with discontinuities occurring only at points in a subset of $\{z_1, \ldots, z_N \}$.  Moreover, if $f, g \in \mathscr{D}_{\eta}^{1*}$ with $f \neq g$, then $\lVert f - g \rVert_{\eta} \neq 0$.  Thus, as with $\mathscr{D}_{\eta}^{1}$, we may view $\mathscr{D}_{\eta}^{1*}$ as a collection of real-valued square-$\eta$-integrable functions, or as a collection of equivalence classes of $L^{2}_{\eta}$.

\begin{prop}\label{lem:d1mu*}
The domain of $\nabla_{\eta}^{*}$ is equal to $\mathscr{D}_{\eta}^{1*}$.
\end{prop}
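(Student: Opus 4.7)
The plan is to prove both inclusions via the measure-geometric integration by parts identity
\[
\langle \nabla_{\eta} g, f \rangle_{\eta} = \langle g, f^{*} \rangle_{\eta} \quad \textup{for all } g \in \mathscr{D}_{\eta}^{1} \textup{ and } f \in \mathscr{D}_{\eta}^{1*},
\]
from which one immediately reads off $\mathscr{D}_{\eta}^{1*} \subseteq \textup{Dom}(\nabla_{\eta}^{*})$ with $\nabla_{\eta}^{*} f = f^{*}$. To establish the identity I would first record two cancellations: $\int \nabla_{\eta} g \, \mathrm{d}\eta = 0$ comes from \eqref{eq:int_deriv_0}, and evaluating \eqref{eq:adjoint_domain} at $x = 0$, $y = 1$, using $\mathbf{f}(0) = \mathbf{f}(1)$ by periodicity and the fact that $\eta \ast \delta_{\Z}$ agrees with $\eta$ on $(0, 1]$, yields the analogous cancellation $\int f^{*} \, \mathrm{d}\eta = 0$. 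Next, applying \eqref{eq:adjoint_domain} with $y = 1$ rewrites $f(x) = f(1) + \int \mathds{1}_{(x, 1]} f^{*} \, \mathrm{d}\eta$ on $\I$; substituting into $\langle \nabla_{\eta} g, f \rangle_{\eta}$ and swapping the order of integration by Fubini-Tonelli (justified by Cauchy-Schwarz and the finiteness of $\eta$) reduces the expression to $f(1) \int \nabla_{\eta} g \, \mathrm{d}\eta + \int f^{*}(t) \int \mathds{1}_{(0,t)}(x) \nabla_{\eta} g(x) \, \mathrm{d}\eta(x) \, \mathrm{d}\eta(t)$. The inner integral equals $g(t) - g(0^{+})$ by letting $y \to 0^{+}$ in \eqref{Def:D1line} and using dominated convergence, so the two boundary contributions $f(1) \int \nabla_{\eta} g \, \mathrm{d}\eta$ and $g(0^{+}) \int f^{*} \, \mathrm{d}\eta$ vanish by the two cancellations above, leaving $\int g f^{*} \, \mathrm{d}\eta$.

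For the reverse inclusion, fix $f \in \textup{Dom}(\nabla_{\eta}^{*})$ and set $h \coloneqq \nabla_{\eta}^{*} f$. Since $\mathds{1} \in \mathscr{D}_{\eta}^{1}$ with $\nabla_{\eta} \mathds{1} = 0$, testing the adjoint relation against $g = \mathds{1}$ forces $\int h \, \mathrm{d}\eta = 0$. For a constant $c \in \R$ (to be fixed later), define $\tilde{f}(x) \coloneqq c - \int \mathds{1}_{(0, x]} h \, \mathrm{d}\eta$ for $x \in \I$. By Cauchy-Schwarz and the finiteness of $\eta$, $\tilde{f}$ is bounded, hence lies in $\mathfrak{L}^{2}_{\eta}$, and straight from its definition it satisfies \eqref{eq:adjoint_domain} with $\tilde{f}^{*} = h$; the cancellation $\int h \, \mathrm{d}\eta = 0$ is precisely what is needed to render the formula consistent across the period boundary under convolution with $\delta_{\Z}$. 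Hence $\tilde{f} \in \mathscr{D}_{\eta}^{1*}$, and the first direction yields $\langle \nabla_{\eta} g, \tilde{f} \rangle_{\eta} = \langle g, h \rangle_{\eta}$ for every $g \in \mathscr{D}_{\eta}^{1}$. Subtracting from the defining identity $\langle \nabla_{\eta} g, f \rangle_{\eta} = \langle g, h \rangle_{\eta}$ gives $\langle \nabla_{\eta} g, f - \tilde{f} \rangle_{\eta} = 0$ for every such $g$. By \Cref{prop:range_nabla_eta}, the orthogonal complement in $L^{2}_{\eta}$ of the range of $\nabla_{\eta}$ is $\{c\mathds{1} \colon c \in \R\}$, so $f - \tilde{f}$ is $\eta$-a.e.\ constant; absorbing this constant into $c$ yields $f = \tilde{f}$ in $L^{2}_{\eta}$, hence $f \in \mathscr{D}_{\eta}^{1*}$.

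The main technical obstacle is the integration by parts identity: atoms of $\eta$ make $g$ left-continuous and $f$ right-continuous with controlled jumps, so the two boundary terms that would ordinarily survive are absorbed only thanks to the periodicity encoded in the Dirac-comb convolution. Rigorously extracting $\int f^{*} \, \mathrm{d}\eta = 0$ from \eqref{eq:adjoint_domain} at the period boundary and bookkeeping the atomic contributions inside the Fubini-Tonelli swap are the steps that require most care; once these are in place, the first inclusion is immediate and the second follows cleanly from \Cref{prop:range_nabla_eta}.
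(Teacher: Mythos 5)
Your proof is correct and follows essentially the same route as the paper's: the inclusion $\mathscr{D}_{\eta}^{1*} \subseteq \textup{Dom}(\nabla_{\eta}^{*})$ via the Fubini integration-by-parts identity combined with the vanishing of $\int f^{*}\,\mathrm{d}\eta$ and $\int \nabla_{\eta}g\,\mathrm{d}\eta$, and the reverse inclusion via \Cref{prop:range_nabla_eta}, which identifies the orthogonal complement of the range of $\nabla_{\eta}$ with the constants. Your explicit construction of the antiderivative $\tilde{f}$ is only a cosmetic repackaging of the paper's step $\varrho\bigl(f - \int \mathds{1}_{(\cdot,1]}\nabla_{\eta}^{*}f\,\mathrm{d}\eta\bigr) = 0$.
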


\begin{proof}
Let $f \in \textup{Dom}(\nabla_{\eta}^{*})$.  Using the fact that $ \langle \nabla_{\eta}^{*} f, \mathds{1} \rangle_{\eta} = \langle f, \nabla_{\eta} \mathds{1} \rangle_{\eta} = \langle f, 0 \rangle_{\eta} = 0$ and Fubini's Theorem , we obtain, for $g \in \mathscr{D}_{\eta}^{1}$,
\begin{align*}
\int \mathds{1}_{(0, 1]} f (\nabla_{\eta} g) \ \mathrm{d} \eta 
&= \int \mathds{1}_{(0, 1]} (\nabla_{\eta}^{*} f) g \ \mathrm{d} \eta\\
&= \int \mathds{1}_{(0, 1]}(x) (\nabla_{\eta}^{*} f)(x) \left( \mathbf{g}(0) + \int \mathds{1}_{[0, x)}(y) (\nabla_{\eta} g)(y) \ \mathrm{d} \eta(y) \right) \ \mathrm{d} \eta(x)\\
&= \int \mathds{1}_{[0, 1)}(y) \nabla_{\eta} g(y) \int \mathds{1}_{(y, 1]}(x) (\nabla_{\eta}^{*}f)(x) \ \mathrm{d} \eta(x) \ \mathrm{d} \eta(y).
\end{align*}
This, together with \Cref{prop:range_nabla_eta} and the fact that $\langle \mathds{1}, \nabla_{\eta} g \rangle_{\eta} = 0$, implies $\varrho (f - \int \mathds{1}_{(\cdot ,1]} \nabla_{\eta}^{*} f \ \mathrm{d}\eta) = 0$. Hence, there exists $c \in \R$ so that, for $\eta$-almost all $y \in (0, 1]$,
\begin{align*}
f(y) - \int \mathds{1}_{(y ,1]} \nabla_{\eta}^{*} f \ \mathrm{d}\eta = c.
\end{align*}
This yields that $f$ is a right-continuous function with $c = \mathbf{f}(0)$. When setting $f^{*} \coloneqq  \nabla_{\eta}^{*} f$, since $\mathbf{f}(0) = f(1)$ and $\langle \nabla_{\eta}^{*} f, \mathds{1} \rangle_{\eta} = 0$, we have that $\mathbf{f}^*$ fulfils the integral equation in \eqref{eq:adjoint_domain} and therefore, $\textup{Dom}(\nabla_{\eta}^{*}) \subseteq \mathscr{D}_{\eta}^{1*}$.

For the reverse inclusion, let $g \in \textup{Dom}(\nabla_{\eta})$, $f \in \mathscr{D}_{\eta}^{1*}$ and $f^{*}$ be as in \eqref{eq:adjoint_domain}. By Fubini's Theorem and the fact that $\langle f^{*}, \mathds{1} \rangle_{\eta} = \langle \nabla_{\eta}g, \mathds{1} \rangle_{\eta} = 0$, we have the following chain of equalities, which yields the result.
\begin{align*}
\int (\nabla_{\eta}g) f \ \textup{d}\eta
&= \int \nabla_{\eta}g (y) \left( f(1) + \int \mathds{1}_{(y, 1]}(x) f^{*}(x) \ \mathrm{d}\eta(x) \right) \ \mathrm{d}\eta(y)\\
&= \int f^{*}(x) \int \nabla_{\eta} g(y) \mathds{1}_{[0, x)}(y) \ \mathrm{d}\eta(y) \mathrm{d}\eta(x)
= \int f^{*}(x) g(x) \ \mathrm{d}\eta(x) \qedhere
\end{align*}
\end{proof}

\begin{cor}
For $f \in \textup{Dom}(\nabla_{\eta}^{*})$, we have that $\nabla_{\eta}^{*} f = f^{*}$, where $f^{*}$ is defined as in \eqref{eq:adjoint_domain}.
\end{cor}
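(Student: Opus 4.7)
The plan is to obtain this identity essentially for free from the proof of \Cref{lem:d1mu*}, once it is combined with a small uniqueness observation for $f^{*}$.

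First, I would check that the function $f^{*}$ appearing in \eqref{eq:adjoint_domain} is uniquely determined in $L^{2}_{\eta}$ by the given $f$. Indeed, if $f^{*}_{1}, f^{*}_{2} \in L^{2}_{\eta}$ both satisfy the defining integral equation, then their difference $h \coloneqq f^{*}_{1} - f^{*}_{2}$ satisfies
\begin{equation*}
\int \mathds{1}_{(x,y]}\, \mathbf{h} \, \mathrm{d}(\eta * \delta_{\Z}) = 0
\end{equation*}
for every pair $x < y$ in $\R$. Specialising to $0 \le x < y \le 1$, the convolution reduces to $\eta$ on the fundamental domain $\I$, so the signed measure $h\, \mathrm{d}\eta$ vanishes on every half-open sub-interval of $\I$. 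A standard monotone class (or $\pi$-$\lambda$) argument then gives $h = 0$ $\eta$-almost everywhere, i.e.\ $f^{*}_{1} = f^{*}_{2}$ in $L^{2}_{\eta}$.

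Next, I would revisit the forward direction of the proof of \Cref{lem:d1mu*}, where the following was already shown: for any $f \in \textup{Dom}(\nabla_{\eta}^{*})$, upon setting $f^{*} \coloneqq \nabla_{\eta}^{*} f$, the periodic extension $\mathbf{f}^{*}$ fulfils precisely the integral equation of \eqref{eq:adjoint_domain}. Hence $\nabla_{\eta}^{*} f$ is a valid choice of $f^{*}$ for the given $f$, and by the uniqueness established above it is \emph{the} choice, i.e.\ $\nabla_{\eta}^{*} f = f^{*}$ in $L^{2}_{\eta}$.

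The only genuinely new ingredient is the uniqueness claim for $f^{*}$, and even that is routine once one restricts to the fundamental domain to eliminate the convolution with $\delta_{\Z}$. Consequently, I expect no real obstacle here: the corollary is essentially a formal extraction from the proof of \Cref{lem:d1mu*}.
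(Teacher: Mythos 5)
Your proposal is correct and matches the paper's intent: the paper gives no separate proof of this corollary precisely because the forward direction of the proof of \Cref{lem:d1mu*} already sets $f^{*} \coloneqq \nabla_{\eta}^{*} f$ and verifies the integral equation of \eqref{eq:adjoint_domain}. Your added uniqueness check for $f^{*}$ in $L^{2}_{\eta}$ is the same (routine) observation the paper makes implicitly, mirroring its remark that the $f'$ in \eqref{Def:D1line} is unique in $L^{2}_{\eta}$.
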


The function $\nabla_{\eta}^{*} f$, when it exists, reflects local properties of $f$.  Indeed, we have
\begin{align}\label{eq:derivative_calculation*}
\nabla_{\eta}^{*} f(z_i) = \lim_{\varepsilon \searrow 0} \frac{\mathbf{f}(z_{i}) - \mathbf{f}(z_{i} - \varepsilon)}{\alpha_i}
\quad \text{and} \quad 
\nabla_{\eta}^{*}f(x) = f_{\Lambda}^{*}(x),
\end{align}
for $i \in \{ 1, \ldots, N \}$ and $x \in A_{i}^{\mathrm{o}} \cap \I$, where $f_{\Lambda}^{*} \in L^{2}_{\Lambda}$ is equal to the negative of the weak derivative of $f$ on $A_{i}^{\mathrm{o}}$, if $c_{i}=1$ and otherwise can be chosen arbitrarily on $A_{i}^{\mathrm{o}}$. 

\begin{prop}\label{thm:nabla_closed}
The operator $\nabla_{\eta}$ is densely defined, unbounded and closed.
\end{prop}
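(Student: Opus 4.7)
The plan is to treat the three properties in sequence. Density of $\mathscr{D}_{\eta}^{1}$ in $L^{2}_{\eta}$ is immediate from \Cref{prop:D1_dense_in_L2}. For unboundedness, I would exploit the hypothesis $\underline{c}\neq \underline{0}$ from \Cref{rmk:KSW16_2}: fix an index $i$ with $c_{i+1}=1$ and, with $\varepsilon$ small but fixed, consider the family $f_{\varepsilon,n}$ constructed inside the proof of \Cref{prop:D1_dense_in_L2}. These functions are supported on a fixed interval and uniformly bounded in $L^{2}_{\eta}$ (sinusoidal of amplitude $1$ on the bulk of $A_{i+1}$), whereas $\nabla_\eta f_{\varepsilon,n}$ equals $2c_{\varepsilon,n}\cos(\cdots)$ there, with $c_{\varepsilon,n}\asymp n$; letting $n\to\infty$ thus witnesses $\|\nabla_\eta f_{\varepsilon,n}\|_\eta\to\infty$ against a bounded source.

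The substantive part is closedness. Given a sequence $(f_n)\subseteq\mathscr{D}_{\eta}^{1}$ with $f_n\to f$ and $\nabla_\eta f_n\to g$ in $L^{2}_{\eta}$, I would first pass to the limit in \eqref{eq:int_deriv_0} to obtain $\langle g,\mathds{1}\rangle_\eta=0$; this is exactly the compatibility condition needed for an antiderivative of $g$ to close up across one period. Taking base-point $y_0=0$ (so that $\mathbf{f}_n(0)=f_n(z_N)$ by periodicity), the defining equation of $\mathscr{D}_{\eta}^{1}$ reads
\begin{align*}
\tilde h_n(x) \coloneqq f_n(x) - f_n(z_N) = \int \mathds{1}_{[0,x)}\,\mathbf{f}_n^{\prime}\,\mathrm{d}\eta\ast\delta_{\Z}, \qquad x\in\I,
\end{align*}
which I would compare to the candidate antiderivative $\tilde h(x) \coloneqq \int \mathds{1}_{[0,x)}\,\mathbf{g}\,\mathrm{d}\eta\ast\delta_{\Z}$. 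Since $(\eta\ast\delta_{\Z})\rvert_{[0,1]}$ is finite of total mass $\eta(\I)$, Cauchy--Schwarz yields $|\tilde h_n(x)-\tilde h(x)|\leq\sqrt{\eta(\I)}\,\|\nabla_\eta f_n-g\|_\eta$ uniformly in $x\in\I$, so $\tilde h_n\to\tilde h$ in $L^{2}_{\eta}$. Combined with $f_n\to f$, the identity $\tilde h_n=f_n-f_n(z_N)\mathds{1}$ forces $f_n(z_N)\mathds{1}\to f-\tilde h$; since $\mathds{1}$ is a fixed nonzero element of $L^{2}_{\eta}$, the scalars $f_n(z_N)$ must converge to some $c\in\R$ with $f=c\mathds{1}+\tilde h$ in $L^{2}_{\eta}$. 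Picking the left-continuous representative $f(x)=c+\tilde h(x)$ and extending periodically, one verifies that $f$ satisfies \eqref{Def:D1line} with $f^{\prime}=g$, whence $f\in\mathscr{D}_{\eta}^{1}$ and $\nabla_\eta f=g$.

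The main obstacle I anticipate is the bookkeeping around the Dirac-comb convolution: one has to track how the atom at $z_N=1$ is simultaneously the shifted atom at $0$ under $\delta_\Z$, verify that the uniform convergence on $\I$ really upgrades to the integral identity on all of $\R$ (which is where the compatibility $\langle g,\mathds{1}\rangle_\eta=0$ is consumed), and confirm that the left-continuous representative one constructs lies in $\mathscr{D}_{\eta}^{1}$ as written in \eqref{Def:D1line}. None of these steps is deep, but the interplay between the boundary atom at $z_N$, the left-continuity built into $\mathscr{D}_{\eta}^{1}$, and the periodization needs to be handled with care.
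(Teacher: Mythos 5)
Your treatment of density and, more importantly, of closedness is correct, and the closedness argument takes a genuinely different route from the paper. The paper proceeds indirectly: it invokes \Cref{thm:Reed_Simon_VIII.3} to conclude that $\nabla_{\eta}^{**}$ is closed and then asserts $\textup{Dom}(\nabla_{\eta})=\textup{Dom}(\nabla_{\eta}^{**})$ by an argument said to be analogous to the computation of $\textup{Dom}(\nabla_{\eta}^{*})$ in \Cref{lem:d1mu*}, together with the (also only sketched) density of $\mathscr{D}_{\eta}^{1*}$. You instead verify closedness of the graph directly: $\langle g,\mathds{1}\rangle_{\eta}=0$ passes to the limit via \eqref{eq:int_deriv_0}, Cauchy--Schwarz against the finite measure $(\eta*\delta_{\Z})\rvert_{[0,1)}$ upgrades $L^{2}_{\eta}$-convergence of the derivatives to uniform convergence of the normalised antiderivatives $\tilde h_{n}$, and the one-dimensionality of $\{c\mathds{1}\colon c\in\R\}$ recovers the constants. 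This is self-contained, replaces two ``analogous argument'' placeholders by explicit estimates, and still delivers everything needed downstream, since the self-adjointness of $-\nabla_{\eta}^{*}\circ\nabla_{\eta}$ only requires $\nabla_{\eta}$ to be closed and densely defined. The bookkeeping you flag is exactly right: the closing-up over a full period is where $\langle g,\mathds{1}\rangle_{\eta}=0$ is consumed, and left-continuity of the representative $c+\tilde h$ follows by dominated convergence from $\mathds{1}_{[0,x)}\nearrow\mathds{1}_{[0,x_{0})}$ as $x\nearrow x_{0}$.

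One concrete repair is needed in the unboundedness step. The functions $f_{\varepsilon,n}$ are \emph{not} uniformly bounded in $L^{2}_{\eta}$ for fixed $\varepsilon$: integrating the ramp and cosine pieces of $f^{\prime}_{\varepsilon,n}$ shows that $f_{\varepsilon,n}$ descends to the value $-c_{\varepsilon,n}\varepsilon/2$ at $z_{i}+\varepsilon/2$ before returning to $0$ at $z_{i}+\varepsilon$, so $\lVert f_{\varepsilon,n}\rVert_{\eta}\gtrsim n\varepsilon^{3/2}$ and the quotient $\lVert\nabla_{\eta}f_{\varepsilon,n}\rVert_{\eta}/\lVert f_{\varepsilon,n}\rVert_{\eta}$ in fact stays bounded as $n\to\infty$ with $\varepsilon$ fixed. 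Replace $f_{\varepsilon,n}$ by $g_{\varepsilon,n}$, which genuinely takes values in $[-1,1]$, so that $\lVert g_{\varepsilon,n}\rVert_{\eta}\leq\sqrt{\eta(\I)}$ while $\lVert\nabla_{\eta}g_{\varepsilon,n}\rVert_{\eta}\asymp n$ on an interval with $c_{i+1}=1$, or use the paper's witnesses $g_{m}(x)=\sin(2\pi mF_{\Gamma}(x)/\Gamma(\I))$; either choice makes the step correct with no other changes to your argument.
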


\begin{proof}
By \Cref{prop:D1_dense_in_L2}, the operator $\nabla_{\eta}$ is densely defined. Set $K \coloneqq \Gamma(\I)$ and define for $m \in \N$ and $x\in \I$ $g_m (x) = \sin (2 \pi m F_{\Gamma}(x)/K) \in \mathscr{D}_{\eta}^{1}$, in which case 
\begin{align*}
\nabla_{\eta} g_m (x) = \begin{cases}
(2 \pi m /K) \cos (2 \pi m  F_{\Gamma}(x)/K) & \text{if} \; x \in \I \setminus \{ z_1, \ldots, z_N\},\\
0 & \text{otherwise}.
\end{cases}
\end{align*}
This implies that $\nabla_{\eta}$ is an unbounded operator as $\lVert g_m \rVert_{\eta} \leq 1 + \sum_{i=1}^{N} \alpha_{i}$ and $\lVert \nabla_{\eta} g_m \rVert_{\eta} = \sqrt{2} \pi m$. \Cref{thm:Reed_Simon_VIII.3} gives that $\nabla_{\eta}^{**}$ is closed. To complete the proof, it suffices to show that $\textup{Dom}(\nabla_{\eta}) = \textup{Dom}(\nabla_{\eta}^{**})$.  However, this follows by an analogously argument to that given in the proof of \Cref{lem:d1mu*}; noting, by a similar proof to that of \Cref{prop:D1_dense_in_L2}, we have $\mathscr{D}_{\eta}^{1*}$ is dense in $L^2_{\eta}$ with respect to $\lVert \cdot \rVert_{\eta}$.
\end{proof}

The non-negative symmetric bilinear form $\E \colon \mathscr{D}_{\eta}^{1} \times \mathscr{D}_{\eta}^{1} \to \R$ defined by $\E(f,g) = \E_{\eta}(f,g) \coloneqq \langle \nabla_{\eta}f,\nabla_{\eta}g \rangle_{\eta}$ is called the \textsl{$\eta$-energy form}. In our next result, we show that $\E$ is a Dirichlet form. With this at hand, we may then define the $\eta$-Laplacian $\Delta_{\eta}$.

\begin{prop}\label{prop:dirichlet_form}
The $\eta$-energy form $\E$ is a Dirichlet form with domain $\mathscr{D}_{\eta}^{1}$.
\end{prop}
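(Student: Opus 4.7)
Recall that a Dirichlet form on $L^2_\eta$ is a densely defined, symmetric, non-negative, closed bilinear form satisfying the unit-contraction (Markov) property. Symmetry and non-negativity of $\E$ are immediate from its definition $\E(f,g) = \langle \nabla_\eta f, \nabla_\eta g\rangle_\eta$, and density of $\mathscr{D}_\eta^1$ in $L^2_\eta$ is \Cref{prop:D1_dense_in_L2}. The real work is therefore to verify closedness and the Markov property.

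For closedness, the plan is to leverage \Cref{thm:nabla_closed}. Let $(f_n)$ be Cauchy in $\mathscr{D}_\eta^1$ with respect to the norm $(\E(f,f) + \lVert f \rVert_\eta^2)^{1/2}$. Then both $(f_n)$ and $(\nabla_\eta f_n)$ are Cauchy in $L^2_\eta$, with respective limits $f$ and $g$. Closedness of $\nabla_\eta$ places $f$ in $\mathscr{D}_\eta^1$ with $\nabla_\eta f = g$, from which $\E(f_n - f, f_n - f) \to 0$. This is essentially the entire closedness step.

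For the Markov property, let $\phi(t) \coloneqq (0 \vee t) \wedge 1$ (a $1$-Lipschitz map sending $0$ to $0$), and, for $f \in \mathscr{D}_\eta^1$, set $f^\# \coloneqq \phi \circ f$. I would build a candidate $\eta$-derivative $(f^\#)' \in L^2_\eta$ piecewise: on each $A_i^{\textup{o}}$ with $c_i = 1$ take $(f^\#)'(x) = \mathds{1}_{\{0 < f(x) < 1\}}(\nabla_\eta f)(x)$, the classical chain-rule formula for an absolutely continuous function composed with $\phi$; on each $A_i^{\textup{o}}$ with $c_i = 0$ take $(f^\#)' \equiv 0$, which represents the unique $L^2_\eta$-equivalence class over such intervals since $\eta$ puts no mass there; and at each atom $z_i$ set $(f^\#)'(z_i) \coloneqq \alpha_i^{-1}(\mathbf{f}^\#(z_i + 0) - \mathbf{f}^\#(z_i))$ in accordance with \eqref{eq:derivative_calculation}.

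The main technical obstacle is verifying that $(f^\#, (f^\#)')$ satisfies the defining integral equation \eqref{Def:D1line}, in particular that the pieces mesh consistently across the periodic extension and the convolution with $\delta_{(b-a)\Z}$. I would decompose $[y,x)$ into its traces on the sub-intervals $A_i^{\textup{o}}$ and the atoms and check contributions separately: on Lebesgue blocks the identity reduces to the classical chain rule for $\phi \circ f$; on null blocks both sides collapse to zero (there $\eta$ vanishes and $f$, hence $f^\#$, is constant on the interior); and at atoms the defining formula absorbs exactly the jump of $\mathbf{f}^\#$. Piecing these together yields $f^\# \in \mathscr{D}_\eta^1$ with $\nabla_\eta f^\# = (f^\#)'$. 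Finally, the $1$-Lipschitz property of $\phi$ gives $|(f^\#)'| \leq |\nabla_\eta f|$ pointwise $\eta$-almost everywhere --- on Lebesgue parts from $|\phi'| \leq 1$, at atoms from the bound $|\phi(s) - \phi(t)| \leq |s - t|$ applied to the difference quotient. Squaring and integrating yields $\E(f^\#, f^\#) \leq \E(f, f)$, completing the Markov condition.
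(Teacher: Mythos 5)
Your proposal is correct and follows essentially the same route as the paper: completeness of $(\mathscr{D}_{\eta}^{1}, \langle \cdot,\cdot\rangle_{\E})$ is deduced from the closedness of $\nabla_{\eta}$ (\Cref{thm:nabla_closed}), and the Markov property is verified by constructing the candidate derivative of the unit contraction piecewise (chain rule on the Lebesgue blocks, difference quotients at the atoms) and checking the defining integral equation directly. The only cosmetic difference is your indicator $\mathds{1}_{\{0 < f < 1\}}$ versus the paper's condition $u(x) \in [0,1]$; these agree in $L^{2}_{\eta}$ since the weak derivative vanishes almost everywhere on level sets.
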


\begin{proof}
Using the properties of the inner product $\langle \cdot,\cdot \rangle_{\eta}$ and the operator $\nabla_{\eta}$ it follows that $\E$ is bilinear, symmetric and that $\E(u, u) \geq 0$, for all $u \in \mathscr{D}_{\eta}^{1}$. Moreover, this yields that $\mathscr{D}_{\eta}^{1}$ equipped with $\langle \cdot, \cdot \rangle_{\E} \coloneqq \langle \cdot, \cdot \rangle_{\eta} + \E(\cdot, \cdot)$ is an inner product space.  All that remains is to show that $\mathscr{D}_{\eta}^{1}$ is complete with respect to the norm induced by $\langle \cdot, \cdot \rangle_{\E}$ and that the Markov property holds.

If $(f_{n})_{n \in \N}$ is a Cauchy sequence in $( \mathscr{D}_{\eta}^{1}, \langle \cdot, \cdot \rangle_{\E} )$, then both $(f_{n})_{n \in \N}$ and $( \nabla_{\eta}f_{n} )_{n \in \N}$ are Cauchy-sequences in $L_{\eta}^{2}$. Hence, there exist $\widetilde{f_{0}}, \widetilde{f_{1}} \in L_{\eta}^{2}$ with $\lim_{n\to \infty} \lVert f_n - \widetilde{f_{0}} \rVert_{\eta} = 0$ and $\lim_{n\to \infty} \lVert \nabla_{\eta}f_n - \widetilde{f_{1}} \rVert_{\eta} = 0$.  \Cref{thm:nabla_closed} implies that $\nabla_{\eta} \widetilde{f_{0}} = \widetilde{f_{1}}$.  Thus, $ (f_{n})$ converges to $\widetilde{f_{0}} \in \mathscr{D}_{\eta}^{1}$ with respect to the norm induced by $\langle \cdot, \cdot \rangle_{\E}$.

For the Markov property it is sufficient to show, for $u \in \mathscr{D}_{\eta}^{1}$, that $u_{+} \in \mathscr{D}_{\eta}^{1}$ and $\vert \nabla_{\eta}u_{+}(x) \rvert \leq \vert \nabla_{\eta}u(x) \rvert$ for all $x \in \I$. Here $u_{+} \coloneqq \min(\max(u,0),1)$.  Define, for $x \in \I \setminus \{ z_{1}, \ldots, z_{N} \}$,
\begin{align*}
u_{+}^{\prime}(x) \coloneqq \begin{cases}
\nabla_{\eta} u(x) & \text{if} \; u(x) \in [0, 1],\\
0 & \text{otherwise}.
\end{cases}
\end{align*}
and for $i \in \{ 1, \ldots, N\}$ set $u_{+}^{\prime}(z_{i}) \coloneqq \lim_{\varepsilon \searrow 0} ( \mathbf{u}_{+}(x + \varepsilon) - \mathbf{u}_{+}(x) )/\alpha_{i}$. A direct calculation shows, for $x,y \in \R$ with $x<y$, that this function fulfils $\mathbf{u}_{+}(x) = \mathbf{u}_{+}(y) + \int \mathds{1}_{[y,x)} \mathbf{u}_{+}^{\prime} \ \mathrm{d}\eta * \delta_{\Z}$.  Hence, $u_{+} \in \mathscr{D}_{\eta}^{1}$ with $\nabla_{\eta} \mathbf{u}_{+} = \mathbf{u}_{+}^{\prime}$. By definition, $\lvert \nabla_{\eta} \mathbf{u}_{+}(x) \rvert \leq \lvert \nabla_{\eta} \mathbf{u}(x) \rvert$ for all $x \in \I \setminus \{ z_{1}, \ldots, z_{N} \}$. 
In the case that $x \in \{ z_{1}, \ldots, z_{N} \}$, we have
\begin{align*}
\lim_{\varepsilon \searrow 0} \lvert \mathbf{u}^{+}(x + \varepsilon) - \mathbf{u}^{+}(x) \rvert
\leq \lim_{\varepsilon \searrow 0} \lvert \mathbf{u}(x + \varepsilon) - \mathbf{u}(x) \rvert,
\end{align*}
and so, by \eqref{eq:derivative_calculation}, it follows that $\lvert \nabla_{\eta} \mathbf{u}_{+}(z_{i}) \rvert \leq \lvert \nabla_{\eta} \mathbf{u}(z_{i}) \rvert$ for $i \in \{ 1, \ldots, N\}$.
\end{proof}

We write $f \in \mathscr{D}_{\eta}^{2}$ if $f \in \mathscr{D}_{\eta}^{1}$ and if there exists $h \in L^2_{\eta}$ such that $\E(f,g) = - \langle h,g \rangle_{\eta}$, for all $g \in \mathscr{D}_{\eta}^{1}$. We call the operator $\Delta_{\eta} \colon \mathscr{D}_{2}^{\eta} \to L^2_{\eta}$ defined by $\Delta_{\eta} f \coloneqq h$ the \textsl{$\eta$-Laplacian}.  Notice, for an arbitrary $g \in \mathscr{D}_{\eta}^{1}$, that $\langle \nabla_{\eta}f,\nabla_{\eta}g \rangle_{\eta}  = - \langle \Delta_{\eta} f,g \rangle_{\eta}$, and thus $\Delta_{\eta} = -\nabla_{\eta}^{*} \circ \nabla_{\eta}$.

\begin{thm}\label{thm:Delta_sel_adjoint}
The operator $\Delta_{\eta}$ is densely defined on $L^2_{\eta}$, linear, self-adjoint and non-positive.
\end{thm}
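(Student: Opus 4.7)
My plan is to prove \Cref{thm:Delta_sel_adjoint} via the classical von Neumann theorem on $T^{*}T$: if $T$ is a closed, densely defined operator between Hilbert spaces, then $T^{*}T$ is a non-negative self-adjoint operator with dense domain. Applied to $T = \nabla_{\eta}$, which is closed and densely defined on $L^{2}_{\eta}$ by \Cref{thm:nabla_closed}, this immediately yields that $\nabla_{\eta}^{*}\circ\nabla_{\eta}$ is non-negative, self-adjoint and densely defined, so that $\Delta_{\eta} = -\nabla_{\eta}^{*}\circ\nabla_{\eta}$ is non-positive, self-adjoint and densely defined. Linearity is automatic from linearity of $\nabla_{\eta}$ and of the adjoint $\nabla_{\eta}^{*}$.

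Before invoking this theorem I would verify that $\mathscr{D}_{\eta}^{2}$, the domain introduced in the definition of $\Delta_{\eta}$, agrees with $\mathrm{Dom}(\nabla_{\eta}^{*}\circ\nabla_{\eta})$, so that the identification $\Delta_{\eta} = -\nabla_{\eta}^{*}\circ\nabla_{\eta}$ noted in the paragraph preceding the theorem is rigorous. Unwinding the definition of $\mathscr{D}_{\eta}^{2}$, membership of $f$ means $f \in \mathscr{D}_{\eta}^{1}$ and the existence of $h \in L^{2}_{\eta}$ with
\begin{align*}
\langle \nabla_{\eta} f, \nabla_{\eta} g \rangle_{\eta} = -\langle h, g \rangle_{\eta} \quad \text{for all} \; g \in \mathscr{D}_{\eta}^{1} = \mathrm{Dom}(\nabla_{\eta}).
\end{align*}
This is exactly the statement that $\nabla_{\eta} f \in \mathrm{Dom}(\nabla_{\eta}^{*})$ with $\nabla_{\eta}^{*}(\nabla_{\eta} f) = -h$; equivalently, $f \in \mathrm{Dom}(\nabla_{\eta}^{*}\circ\nabla_{\eta})$ with $\Delta_{\eta} f = -\nabla_{\eta}^{*}\nabla_{\eta} f$.

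With this identification in hand, non-positivity is immediate: for $f \in \mathscr{D}_{\eta}^{2}$,
\begin{align*}
\langle \Delta_{\eta} f, f \rangle_{\eta} = -\langle \nabla_{\eta}^{*}\nabla_{\eta} f, f \rangle_{\eta} = -\langle \nabla_{\eta} f, \nabla_{\eta} f \rangle_{\eta} = -\lVert \nabla_{\eta} f\rVert_{\eta}^{2} \leq 0.
\end{align*}
For self-adjointness together with density of the domain I would invoke the von Neumann theorem as stated in \cite{RS81}. The underlying mechanism is the orthogonal decomposition $L^{2}_{\eta}\oplus L^{2}_{\eta} = U\Gamma(\nabla_{\eta}) \oplus \Gamma(\nabla_{\eta}^{*})$ with the unitary $U(x,y) = (-y,x)$; this is precisely where closedness of $\nabla_{\eta}$ enters, and it produces a bounded everywhere-defined inverse for $I + \nabla_{\eta}^{*}\nabla_{\eta}$, which forces both density of $\mathrm{Dom}(\nabla_{\eta}^{*}\nabla_{\eta})$ and self-adjointness. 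Since \cite{RS81} is already invoked for the unbounded-operator framework, the cleanest route is to quote the theorem rather than reproduce its proof. The substantive input from the present paper is the closedness established in \Cref{thm:nabla_closed}, so that the main obstacle, such as it is, was already handled there; at this stage the theorem follows in a few lines.
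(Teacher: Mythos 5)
Your proposal is correct and follows essentially the same route as the paper: both hinge on the closedness of $\nabla_{\eta}$ from \Cref{thm:nabla_closed} together with standard unbounded-operator theory, and your identification of $\mathscr{D}_{\eta}^{2}$ with $\mathrm{Dom}(\nabla_{\eta}^{*}\circ\nabla_{\eta})$ is exactly the observation the paper records in the paragraph preceding the theorem. The only divergence is minor: the paper verifies dense definedness by checking that the explicit functions $f_{\varepsilon,n}$, $g_{\varepsilon,n}$, $h$ and $h_{n}$ from \Cref{prop:D1_dense_in_L2} already lie in $\mathscr{D}_{\eta}^{2}$, whereas you obtain density of the domain for free from von Neumann's theorem on $T^{*}T$ --- arguably the cleaner citation, since the paper's appeal to \Cref{thm:Reed_Simon_VIII.3} alone does not literally yield self-adjointness of $\nabla_{\eta}^{*}\circ\nabla_{\eta}$ without that theorem in the background.
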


\begin{proof}
That the operator is densely defined follows from the observation that the functions $f_{\varepsilon,n}, g_{\varepsilon,n}, h$ and $h_{n}$, as defined in \Cref{prop:D1_dense_in_L2}, lie in $\mathscr{D}_{\eta}^{2}$. Linearity follows from the linearity of $\nabla_{\eta}$ and the bilinearity of the inner product. The fact that $\Delta_{\eta}$ is self-adjoint is a consequence of \Cref{thm:Reed_Simon_VIII.3} and \Cref{thm:nabla_closed}.  Further, since $\langle \Delta_{\eta} f,f \rangle_{\eta} = - \langle \nabla_{\eta}f,\nabla_{\eta}f \rangle_{\eta} \leq 0$ we have the operator $\Delta_{\eta}$ is non-positive.
\end{proof}

We conclude this section with the following theorem, in which we show that $\Delta_{\eta}$ has compact resolvent. For this we use the following notation. We denote the closed unit ball in a normed space $(X, \lVert \cdot \rVert)$ by $B(X,\lVert \cdot \rVert)$ and for $i \in \{ 1, \ldots, N \}$, let $(W^{1,2}_{i},\lVert \cdot \rVert_{i})$ denote the Sobolev space $(W^{1,2}(A_{i}^{\textup{o}}), \lVert \cdot \rVert_{1,2})$.

\begin{thm}\label{thm:resolvent}
The operator $\Delta_{\eta}$ has compact resolvent.
\end{thm}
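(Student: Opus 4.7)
The plan is to reduce the claim to compactness of the natural embedding $\iota \colon (\mathscr{D}_{\eta}^{1}, \lVert \cdot \rVert_{\E}) \hookrightarrow (L^{2}_{\eta}, \lVert \cdot \rVert_{\eta})$ of the form domain into $L^{2}_{\eta}$, where $\lVert \cdot \rVert_{\E}$ denotes the norm induced by $\langle \cdot, \cdot \rangle_{\E}$. By \Cref{prop:dirichlet_form}, $\E$ is a closed symmetric Dirichlet form, and by \Cref{thm:Delta_sel_adjoint}, $\Delta_{\eta} = -\nabla_{\eta}^{*} \circ \nabla_{\eta}$ is its associated non-positive self-adjoint operator on $L^{2}_{\eta}$. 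Standard spectral theory for operators defined via closed forms (see, e.g., \cite{RS81}) then yields the equivalence: $\Delta_{\eta}$ has compact resolvent if and only if $\iota$ is a compact operator.

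To establish this compactness, I would take an arbitrary sequence $(f_{n})_{n \in \N} \subseteq \mathscr{D}_{\eta}^{1}$ with $\lVert f_{n} \rVert_{\E} \leq 1$ and extract an $L^{2}_{\eta}$-convergent subsequence by splitting according to the structure of $\eta$. On each $A_{i}^{\textup{o}}$ with $c_{i} = 1$, the restriction $\eta \vert_{A_{i}^{\textup{o}}}$ coincides with Lebesgue measure and, by \eqref{eq:derivative_calculation}, $\nabla_{\eta} f_{n}$ agrees with the weak derivative of $f_{n}$ on $A_{i}^{\textup{o}}$; hence $f_{n} \vert_{A_{i}^{\textup{o}}}$ is bounded in the Sobolev space $W^{1,2}_{i}$, and the Rellich-Kondrachov theorem extracts an $L^{2}(A_{i}^{\textup{o}}, \Lambda)$-convergent subsequence. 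On each $A_{i}^{\textup{o}}$ with $c_{i} = 0$, $\eta(A_{i}^{\textup{o}}) = 0$, so these pieces contribute nothing to $\lVert \cdot \rVert_{\eta}$. For the atomic part, the bound $\alpha_{i} f_{n}(z_{i})^{2} \leq \lVert f_{n} \rVert_{\eta}^{2} \leq 1$ shows that the vectors $(f_{n}(z_{1}), \ldots, f_{n}(z_{N})) \in \R^{N}$ lie in a bounded set, so Bolzano-Weierstrass yields a componentwise convergent subsequence. A diagonal argument through the finitely many indices $i \in \{1, \ldots, N\}$ produces a single subsequence convergent in $L^{2}_{\eta}$.

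The main technical step is verifying that the form norm $\lVert \cdot \rVert_{\E}$ controls each Sobolev piece, which is immediate from the definitions once one observes that on intervals with $c_{i} = 1$ the measure $\eta$ restricted to $A_{i}^{\textup{o}}$ equals $\Lambda \vert_{A_{i}^{\textup{o}}}$ and that $\nabla_{\eta} f$ there agrees with the classical weak derivative. I do not anticipate a deeper obstacle: the non-Lebesgue part of $\eta$ contributes only finitely many atomic degrees of freedom to $L^{2}_{\eta}$, and crucially the extracted limit need not lie in $\mathscr{D}_{\eta}^{1}$ -- only $L^{2}_{\eta}$-convergence is required -- so no pointwise compatibility is needed between the Rellich-Kondrachov limits on the $A_{i}^{\textup{o}}$ and the limits of the atomic values at the $z_{i}$.
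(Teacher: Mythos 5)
Your proposal is correct and follows essentially the same route as the paper: both arguments reduce the claim to compactness of the embedding of $(\mathscr{D}_{\eta}^{1}, \lVert \cdot \rVert_{\E})$ into $L^{2}_{\eta}$, and both prove that compactness via Rellich--Kondrachov on the intervals $A_{i}^{\textup{o}}$ with $c_{i}=1$ together with a Bolzano--Weierstrass argument for the finitely many atomic values. The only difference lies in the reduction step: you invoke the general form-theoretic equivalence between compact resolvent and compact embedding of the form domain of a closed non-negative form, whereas the paper verifies by hand that $(\Delta_{\eta} - \lambda \operatorname{Id})^{-1}$ is bounded from $L^{2}_{\eta}$ into the form domain, via the estimate $\lVert (\Delta_{\eta} - \lambda \operatorname{Id})^{-1}f \rVert_{\E}^{2} \leq ((1+\lvert\lambda\rvert)K^{2}+K)\lVert f \rVert_{\eta}^{2}$, and then composes with the compact embedding; both justifications are valid.
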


\begin{proof}
Let $\lambda$ denote a fixed element of the resolvent set . We show the embedding $\pi \colon \mathscr{D}_{\eta}^{1} \to L^{2}_{\eta}$ is compact, and that $(\lambda \operatorname{Id} - \Delta_{\eta})^{-1} \colon L^{2}_{\eta} \to  \mathscr{D}_{\eta}^{1}$ is continuous. This is sufficient to prove the result since the composition of a compact operator and a continuous operator is compact.

Let $(f_{n})_{n \in \N}$ be a sequence in $B(\mathscr{D}_{\eta}^{1}, \lVert \cdot \rVert_{\E})$.  To show $\pi$ is compact, it is sufficient to show that $(f_{n})_{n \in \N}$ has a convergent subsequence with respect to $\lVert \cdot \rVert_{\eta}$. It is known, for $i \in \{ 1, \ldots, N \}$, that the unit ball  $B(W^{1,2}_{i}, \lVert \cdot \rVert_{i})$ is compact in $L^{2}_{\Lambda}$. By \eqref{eq:derivative_calculation}, if $f \in B(\mathscr{D}_{\eta}^{1}, \lVert \cdot \rVert_{\E})$, then $f \rvert_{A_{i}^{\textup{o}}} \in B(W^{1,2}_{i}, \lVert \cdot \rVert_{i})$. This yields the existence of a subsequence $(n_{k})_{k \in \N}$ such that for $i \in \{ 1, \ldots, N \}$ with $c_{i} = 1,$ the sequence $(f_{n_{k}} \rvert_{A_{i}^{\textup{o}}})_{k \in \N}$ converges in $W^{1,2}_{i}$. Combined with the Bolzano-Weierstrass Theorem and the left continuity of elements in $\mathscr{D}_{\eta}^{1}$, this yields the existence of a subsequence $(n_{l})_{l \in \N}$, such that  $(f_{n_{l}})_{l \in \N}$ converges with respect to $\lVert \cdot \rVert_{\eta\rvert_{A_{i}}}$ for all $i \in \{ 1, \ldots, N \}$, and hence, with respect to $\lVert \cdot \rVert_{\eta}$.

To conclude the proof it is sufficient to find, for each $\lambda$ belonging to the resolvent set, a constant $C \in \R$ such that $\lVert (\Delta_{\eta} - \lambda \operatorname{Id})^{-1}f \rVert_{\E} \leq  C \lVert f \rVert_{\eta}$ for all $f \in L^{2}_{\eta}$. This is done in the following sequence of inequalities, in which we use that, since $(\Delta_{\eta} - \lambda \operatorname{Id})^{-1}$ is a bounded linear operator on $L^{2}_{\eta}$, there exists a $K \in \R$ with $\lVert (\Delta_{\eta} - \lambda \operatorname{Id})^{-1}f \rVert_{\eta} \leq  K \lVert f \rVert_{\eta}$.
\begin{align*}
\lVert (\Delta_{\eta} - \lambda \operatorname{Id})^{-1}f \rVert_{\E}^{2} 
&= \langle  (\Delta_{\eta} - \lambda \operatorname{Id})^{-1}f ,  (\Delta_{\eta} - \lambda \operatorname{Id})^{-1}f \rangle_{\eta} + \langle \nabla_{\eta} (\Delta_{\eta} - \lambda \operatorname{Id})^{-1}f , \nabla_{\eta}(\Delta_{\eta} - \lambda \operatorname{Id})^{-1}f  \rangle_{\eta} \\ 
&\leq  (1+ \lvert \lambda \rvert) \,\lVert (\Delta_{\eta} - \lambda \operatorname{Id})^{-1}f \rVert_{\eta}^{2} + \langle f , (\Delta_{\eta} - \lambda \operatorname{Id})^{-1}f  \rangle_{\eta} \\
&\leq \left(\left(1+\lvert \lambda \rvert \right) K^{2}+K\right)\lVert f \rVert_{\eta}^{2}
\qedhere
\end{align*}
\end{proof}

Since every eigenfunction of the resolvent operator is also an eigenfunction of $\Delta_{\eta}$, the spectral theorem for compact operators together with the fact that $\Delta_{\eta}$ is non-positive imply that the eigenfunctions of $\Delta_{\eta}$ form a basis of $L_{\eta}^{2}$ and that the eigenvalues of $\Delta_{\eta}$ are non-positive and form a countable unbounded monotonic sequence. Moreover, all eigenvalues of $\Delta_{\eta}$ have finite multiplicity.

\begin{proof}[Proof of \Cref{thm:main1}]
This is a direct consequence of \Cref{rmk:KSW16_2} and \Cref{thm:Delta_sel_adjoint,thm:resolvent}.
\end{proof}

\section{Eigenvalues and eigenfunctions of $\Delta_{\eta}$}\label{sec:eigenvalues_functions}

\subsection{Proof of \Cref{thm:main2} and general observations}\label{sec:3_1}

Let $N \in \N$ denote a positive integer and for $i \in \{ 1, \ldots, N\}$ let $\alpha_{i} > 0$ and $z_{i} \in \I$ with $0 < z_{1} < \cdots < z_{N} \leq 1$.  In this section we determine a systematic way to compute the eigenvalues and eigenfunctions of the $\eta$-Laplacian, where $\eta = \Lambda + \sum_{i=1}^{N} \alpha_{i} \delta_{z_{i}}$.  Without loss of generality, the assumption $z_{N}=1$ can be made, since we can obtain the eigenfunctions of measures not having this property by applying an appropriate translation argument. To be precise, set $\hat{z}_{i} \coloneqq z_{i}+1-z_{N}$ for $i \in \{ 1, \ldots, N\}$ and define the measure $\hat{\eta} = \Lambda + \sum_{i=1}^{N} \alpha_{i} \delta_{\hat{z}_{i}}$.  By construction, we have that $f$ is an eigenfunction of $\Delta_{\hat{\eta}}$ if and only if 
\begin{align*}
x \mapsto \begin{cases}
f(x - z_{N} + 1) & x \in (0, z_{N}],\\
f(x - z_{N}) & x \in (z_{N}, 1],
\end{cases}
\end{align*}
is an eigenfunction of $\Delta_{\eta}$. 

\begin{proof}[Proof of \Cref{thm:main2}]
Combining \eqref{eq:derivative_calculation}, \eqref{eq:derivative_calculation*} and the fact that elements of $\textup{Dom}(\nabla_{\eta})$ are left-continuous, together with Picard-Lindel\"of's Theorem, if $\Delta_{\eta} f = \lambda f$, for some $\lambda \in \R$ and $f \in \mathscr{D}_{\eta}^{2} $, then there exist $b, a_{1}, \ldots, a_{N}, \gamma_{1},\ldots, \gamma_{N} \in \R$ with $\gamma_{1} \in ( -\pi/2, \pi/2 ]$ and $\lambda = -b^2$, such that 
\begin{align}
f(x) =
\begin{cases}\label{eq:eigenfunction_form}
a_{1} \sin(b x + \gamma_{1}) & \text{if} \; x \in (0, z_{1}],\\
\hspace{3em} \vdots & \hspace{2.5em} \vdots \\
a_{N} \sin(b x + \gamma_{N}) & \text{if} \; x \in (z_{N-1}, 1].
\end{cases}
\end{align}
Since $\mathbf{\nabla_{\eta} f}$ is right-continuous and $\mathbf{\Delta_{\eta} f}$ is left-continuous, by \eqref{eq:derivative_calculation} and \eqref{eq:derivative_calculation*}, we have $f$ is an eigenfunction of $\Delta_{\eta}$ if and only if $b, a_{1}, \ldots, a_{N}, \gamma_{1},\ldots, \gamma_{N}$ satisfy the following system of equations:
\begin{align}\label{eq:system_general}
\begin{aligned}
&\alpha_{j} b a_{j+1} \cos(b z_{j} +\gamma_{j+1}) = a_{j+1} \sin(b z_{j} +\gamma_{j+1}) - a_{j} \sin(b z_{j} +\gamma_{j}), \\
&\alpha_{j} b^2 a_{j} \sin(b z_{j} +\gamma_{j}) =  a_{j} b \cos(b z_{j} +\gamma_{j}) - a_{j+1} b \cos(b z_{j} +\gamma_{j+1}),
\end{aligned}
\end{align}
for $j \in \{1,\ldots,N-1\}$, and
\begin{align}\label{eq:system_general_N}
\begin{aligned}
&\alpha_{N} b a_{1} \cos(\gamma_{1}) = a_{1} \sin(\gamma_{1}) - a_{N} \sin(b +\gamma_{N}), \\
&\alpha_{N} b^2 a_{N} \sin(b +\gamma_{N}) = a_{N} b \cos(b +\gamma_{N}) - a_{1} b \cos(\gamma_{1}).
\end{aligned}
\end{align}
This concludes the proof.
\end{proof}

Let $f$ be an eigenfunction of $\Delta_{\eta}$ with eigenvalue $\lambda$, and let $b, a_{1}, \dots, a_{N}$ be as in \eqref{eq:eigenfunction_form}. By the fact that $\lambda = -b^{2}$, if $b=0$, then $f$ is a step function, and so, \eqref{eq:system_general} and \eqref{eq:system_general_N} imply that $f$ is a constant function.  If $a_{i} = 0$ for some $i \in \{ 1, \ldots, N-1\}$ and if $b \neq 0$, then \eqref{eq:system_general} yields that $a_{j+1}\cos(bz_{j}+\gamma_{j+1}) = a_{j+1}\sin(bz_{j}+\gamma_{j+1}) = 0$, and hence, $a_{j+1} = 0$. An analogue results holds when one assumes that $a_{N} = 0$.  This implies, if $a_{i} = 0$ for some $i \in \{ 1, \ldots, N \}$, then $f \equiv 0$.

\begin{cor}\label{cor:constant_ef_a_nonzero}
Every constant function is an eigenfunction with corresponding eigenvalue equal to zero. Moreover, the eigenspace $E_{0} \coloneqq \{ f \in \mathscr{D}_{\eta}^{2} \colon \Delta_{\eta} f \equiv 0 \}$ is one-dimensional.  Further, if $f \not\in E_{0}$ is an eigenfunction of $\Delta_{\eta}$, then $b \neq 0$ and $a_{i} \neq 0$ for all $i \in \{1, \ldots, N\}$, where $b, a_{1}, \dots, a_{N}$ are as in \eqref{eq:eigenfunction_form}.
\end{cor}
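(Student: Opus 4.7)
The plan is to invoke \Cref{thm:main2}, which characterises every eigenfunction as a piecewise-sinusoidal function of the form \eqref{eq:eigenfunction_form} whose parameters satisfy \eqref{eq:system_general}--\eqref{eq:system_general_N}, and then carry out the elementary case analysis sketched in the paragraph preceding the corollary in full detail.

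First I would dispatch the opening assertion: a constant function $c \in \mathfrak{L}^{2}_{\eta}$ lies in $\mathscr{D}_{\eta}^{1}$ with $\nabla_{\eta} c \equiv 0$ directly from \eqref{Def:D1line}, so $\E_{\eta}(c, g) = \langle 0, \nabla_{\eta} g \rangle_{\eta} = 0 = -\langle 0, g\rangle_{\eta}$ for every $g \in \mathscr{D}_{\eta}^{1}$; hence $c \in \mathscr{D}_{\eta}^{2}$ with $\Delta_{\eta} c = 0$.

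Next, for the one-dimensionality of $E_{0}$, I would take an arbitrary $f \in E_{0}$. By \Cref{thm:main2} the eigenvalue $0 = -b^{2}$ forces $b = 0$, so \eqref{eq:eigenfunction_form} reduces to the step function $f \equiv a_{i}\sin(\gamma_{i})$ on $A_{i}$. Substituting $b = 0$ into \eqref{eq:system_general}--\eqref{eq:system_general_N} (after cancelling the common factor), the nontrivial relations collapse to $a_{j+1}\sin(\gamma_{j+1}) = a_{j}\sin(\gamma_{j})$ for $j \in \{1, \ldots, N-1\}$ together with $a_{1}\sin(\gamma_{1}) = a_{N}\sin(b + \gamma_{N}) = a_{N}\sin(\gamma_{N})$, so all step values agree and $f$ is constant. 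Combined with the preceding paragraph this yields $E_{0} = \{c\mathds{1} \colon c \in \R\}$.

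For the last statement, let $f \notin E_{0}$ be an eigenfunction. Since $\lambda = -b^{2}$ and $\lambda \neq 0$, we immediately obtain $b \neq 0$. To rule out vanishing $a_{i}$'s, I would argue by cyclic propagation: suppose $a_{i_{0}} = 0$ for some $i_{0}$. Plugging this into the pair \eqref{eq:system_general} with $j = i_{0}$ (or into \eqref{eq:system_general_N} if $i_{0} = N$) collapses the second equation to $a_{i_{0}+1}\, b\, \cos(b z_{i_{0}} + \gamma_{i_{0}+1}) = 0$ (indices modulo $N$). Since $b \neq 0$, either $a_{i_{0}+1} = 0$, or $\cos(b z_{i_{0}} + \gamma_{i_{0}+1}) = 0$; in the latter situation $\sin(b z_{i_{0}} + \gamma_{i_{0}+1}) \neq 0$ and the first equation then forces $a_{i_{0}+1} = 0$ as well. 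Iterating this argument and closing the cycle via \eqref{eq:system_general_N} propagates the vanishing to all indices, so $f \equiv 0$, contradicting that $f$ is an eigenfunction.

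The only mildly delicate step is ensuring the cyclic propagation is truly watertight at points where $\cos(b z_{j} + \gamma_{j+1})$ could vanish, but the Pythagorean identity makes this automatic, so no serious obstacle remains.
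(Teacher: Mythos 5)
Your proposal is correct and follows essentially the same route as the paper, which establishes the corollary in the paragraph preceding it: setting $b=0$ collapses \eqref{eq:system_general}--\eqref{eq:system_general_N} to equality of the step values, and for $b\neq 0$ a vanishing $a_{j}$ forces $a_{j+1}\cos(bz_{j}+\gamma_{j+1})=a_{j+1}\sin(bz_{j}+\gamma_{j+1})=0$, hence $a_{j+1}=0$, and the vanishing propagates cyclically. Your added care at the points where the cosine vanishes and your explicit verification that constants lie in $\mathscr{D}_{\eta}^{2}$ are fine but do not change the argument.
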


If the atoms are equally distributed, namely $z_{i}-z_{i-1} = 1/N$ for $i \in \{ 1, \ldots, N \}$, two properties which follow directly from \eqref{eq:system_general} and \eqref{eq:system_general_N} are the following. If $f$ is an eigenfunction of $\Delta_{\eta}$, then, for $r \in \{2, \ldots, N \}$,
\begin{align}
f_{r}(x) &=
\begin{cases}
a_{N-r+2} \sin(b (x +(N-r+1)/N) + \gamma_{N-r+2}) & \text{if} \; x \in (0, 1/N],\\
\hspace{5em}\vdots &\hspace{3.5em}\vdots \\
a_{N} \sin(b (x+(N-r+1)/N) + \gamma_{N}) & \text{if} \; x \in ((r-2)/N, (r-1)/N],\\
a_{1} \sin(b (x-(r-1)/N) + \gamma_{1}) & \text{if} \; x \in ((r-1)/N, r/N],\\
\hspace{5em}\vdots &\hspace{3.5em}\vdots \\
a_{N-r+1} \sin(b (x-(r-1)/N) + \gamma_{N-r+1}) & \text{if} \; x \in ((N-1)/N, 1],
\end{cases}\label{eq:rotation}
\end{align}
is an eigenfunction of $\Delta_{\eta_{r}}$, where $\eta_{r} \coloneqq \Lambda + \sum_{i=1}^{r-1} \alpha_{i+N-r+1} \delta_{z_{i}} + \sum_{i=r}^{N} \alpha_{i-r+1} \delta_{z_{i}}$.  Note, if $\alpha_{1} = \ldots = \alpha_{N}$, then $\eta_{r} = \eta$. 

\begin{cor}\label{cor:concatenation}
Let $N=p k$ with $p,k \in \N$ and suppose that $\alpha_{i+p} = \alpha_{i}$ for all $i \in \{ 1, \ldots, N-p\}$. Set 
\begin{align*}
\eta^{(p)} \coloneqq \Lambda + \sum_{i=1}^{p} \alpha_{i} \delta_{i/p}
\quad \text{and} \quad 
\eta^{(N)} \coloneqq \Lambda + \sum_{i=1}^{N} \alpha_{i}/k \delta_{i/N}.
\end{align*}
If $f^{(p)}$ is an eigenfunction of $\Delta_{\eta^{(p)}}$ with eigenvalue $\lambda^{(p)}$, then $f^{(N)}$ is an eigenfunction of $\Delta_{\eta^{(N)}}$ with eigenvalue $\lambda^{(n)} \coloneqq k^{2}\lambda^{(p)}$, where $f^{(N)}(x) \coloneqq \mathbf{f}^{(p)} (kx)$ for $x \in \I$.
\end{cor}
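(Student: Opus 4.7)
The plan is to apply \Cref{thm:main2} directly: write $f^{(p)}$ in its sinusoidal representation, produce a candidate sinusoidal representation for $f^{(N)}$ via the dilation $x \mapsto kx$ and the periodic extension $\mathbf{f}^{(p)}$, and then verify that the resulting parameters satisfy the system \eqref{eq:system_general}--\eqref{eq:system_general_N} associated to $\eta^{(N)}$.

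By \Cref{thm:main2}, write $f^{(p)}(x) = a_r^{(p)} \sin(b^{(p)} x + \gamma_r^{(p)})$ on $((r-1)/p, r/p]$, with $\lambda^{(p)} = -(b^{(p)})^2$, and set $b^{(N)} \coloneqq k b^{(p)}$, so that $\lambda^{(N)} = -(b^{(N)})^2 = k^2 \lambda^{(p)}$. For each $j \in \{1,\dots,N\}$ uniquely write $j = m p + r$ with $r \in \{1,\dots,p\}$ and $m \in \{0,\dots,k-1\}$. For $x \in ((j-1)/N, j/N]$ one has $kx \in ((j-1)/p, j/p]$, and reducing modulo the period of $\mathbf{f}^{(p)}$ yields $\mathbf{f}^{(p)}(kx) = f^{(p)}(kx - m) = a_r^{(p)} \sin(b^{(N)} x + \gamma_r^{(p)} - b^{(p)} m)$. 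Hence $f^{(N)}$ has exactly the sinusoidal form required by \Cref{thm:main2}, with $a_j^{(N)} \coloneqq a_r^{(p)}$ and $\gamma_j^{(N)} \coloneqq \gamma_r^{(p)} - b^{(p)} m$.

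The remaining task is to verify \eqref{eq:system_general} at every interior junction $j \in \{1,\dots,N-1\}$ and \eqref{eq:system_general_N} at $j = N$, using the $\eta^{(N)}$-weights $\alpha_j/k$. I would split into two cases. When $r(j) < p$, consecutive atoms lie in the same periodic block, so $m(j) = m(j+1)$, the common phase shift $-b^{(p)} m$ cancels on both sides, $\alpha_j = \alpha_{r(j)}$ by the hypothesis $\alpha_{i+p} = \alpha_i$, and the factor $1/k$ on the weight is precisely cancelled by the factor $k$ in $b^{(N)}$; the two equations collapse term-by-term to the $r(j)$-th equations of the system for $\eta^{(p)}$, which hold by hypothesis. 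When $r(j) = p$ (including the wraparound $j = N$), the key arithmetic identity $b^{(N)} (j/N) = (m+1) b^{(p)}$, combined with the unequal phase shifts $-b^{(p)} m$ and $-b^{(p)}(m+1)$, makes the arguments of $\sin$ and $\cos$ collapse to $b^{(p)} + \gamma_p^{(p)}$ on one side and $\gamma_1^{(p)}$ on the other, so both equations reduce to the wraparound pair \eqref{eq:system_general_N} for $\eta^{(p)}$, which again hold by hypothesis.

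The main bookkeeping obstacle is the second case: one must check that a junction \emph{interior} to $\I$ for $\eta^{(N)}$, namely one that glues two consecutive copies of the periodic block, is governed by the \emph{wraparound} equations \eqref{eq:system_general_N} of the $\eta^{(p)}$-system rather than by its interior equations \eqref{eq:system_general}. Once this reduction is verified, \Cref{thm:main2} immediately yields that $f^{(N)}$ is an eigenfunction of $\Delta_{\eta^{(N)}}$ with eigenvalue $k^2 \lambda^{(p)}$. The degenerate case $\lambda^{(p)} = 0$, in which $f^{(p)}$ is constant by \Cref{cor:constant_ef_a_nonzero}, is handled separately and trivially, since then $f^{(N)}$ is the same constant function and $\lambda^{(N)} = 0 = k^{2} \lambda^{(p)}$.
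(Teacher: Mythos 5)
Your proposal is correct and follows essentially the same route as the paper, which presents \Cref{cor:concatenation} as a direct consequence of the system \eqref{eq:system_general}--\eqref{eq:system_general_N}: you substitute the dilated, periodically extended sinusoidal pieces into that system for $\eta^{(N)}$ and check that the intra-block junctions reproduce the interior equations for $\eta^{(p)}$ while the block-gluing junctions (including $j=N$) reproduce its wraparound pair, with the weight factor $1/k$ offset by $b^{(N)}=k\,b^{(p)}$. The bookkeeping you flag does work out as claimed, since at a gluing junction $j=(m+1)p$ one has $b^{(N)}j/N=(m+1)b^{(p)}$, so the phases collapse to $b^{(p)}+\gamma_p^{(p)}$ and $\gamma_1^{(p)}$ exactly as in \eqref{eq:system_general_N}.
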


\subsection{$\mathbf{N = 1}$}\label{sec:N=1}

Here we consider the case $N=1$, namely when $\eta = \Lambda +\alpha \delta_{z}$, for some $\alpha > 0$ and $z \in \I$. As in \Cref{sec:3_1}, without loss of generality we may assume that $z = 1$. The main results of this section are \Cref{thm:eigenvalues_and_functions} and \Cref{cor:evcf_1atom}, in which we explicitly compute the eigenvalues and eigenfunctions of $\Delta_{\eta}$. For the proofs of these results we will require the following preliminaries. For $\beta > 0$ and $k \in \Z$, let $c_{k}^{\pm} = c_{k}^{\pm}(\beta)$ denote the unique solution in the interval $(-\pi/2, \pi/2)$ to the equation
\begin{align}\label{eq:def_ck} 
\tan(c_{k}^{\pm}) = -2c_{k}^{\pm} \beta \pm \beta \pi/2 + 2 \pi \beta k \pm 1
\end{align}
and set 
\begin{align}\label{def:xi_k}
\xi_{k}^{\pm} = \xi_{k}^{\pm}(\beta) \coloneqq  (\tan(c_{k}^{\pm}) \mp 1)/\beta.
\end{align}
Analogously, for $\beta < 0$ we denote by $C_{k}^{\pm} = C_{k}^{\pm}(\beta)$ the set of solutions to \eqref{eq:def_ck}. Note, the cardinality $\lvert C_{k}^{\pm} \rvert$ of $C_{k}^{\pm}$ is less than or equal to three. We denote the elements of $C_{k}^{\pm}$ by $c_{k,i}^{\pm} = c_{k,i}^{\pm}(\beta)$, where $i \in \{ 1,\ldots, \lvert C_{k}^{\pm}\rvert\}$. For every $c_{k,i}^{\pm}$ define the values $\xi_{k,i}^{\pm} = \xi_{k,i}^{\pm}(\beta)$ similar to \eqref{def:xi_k} and denote by $\Xi_{k}^{\pm} = \Xi_{k}^{\pm}(\beta)$ the set $\{ \xi_{k,i}^{\pm} \colon i \in \{ 1,\ldots, \lvert C_{k}^{\pm}\rvert\} \}$.

\begin{lem}\label{lem:sol_equations}
Let $\beta \in \R \setminus \{ 0 \}$. The pair $(\xi,c) \in \R \setminus \{0\} \times \R$ is a solution to the system of equations
\begin{align}\label{eq:one_atom_system_general}
\begin{aligned}
&\beta \xi \cos(c) = \sin(c) - \sin(\xi + c)\\
&\beta \xi^{2} \sin(\xi + c) = \xi \cos(\xi + c) - \xi \cos(c)
\end{aligned}
\end{align}
if and only if
\begin{align*}
(\xi,c) \in 
\begin{cases}
\{ (\xi_{k}^{\pm},c_{k}^{\pm}) \colon k \in \Z \} &\text{if} \; \beta >0, \\
\{ (\xi_{k,i}^{\pm}, c_{k,i}^{\pm}) \colon k \in \Z \; \text{and} \;  i \in \{ 1,\ldots, \lvert C_{k}^{\pm}\rvert\} \} &\text{if} \; \beta<0. 
\end{cases}
\end{align*}
The system of equations given in \eqref{eq:one_atom_system_general} is also solved by $(0,c)$ for all $c \in \R$.  Further, if $c = \pi /2 + \pi k$, for some $k \in \Z$, then the only solution to \eqref{eq:one_atom_system_general} is given when $\xi = 0$.
\end{lem}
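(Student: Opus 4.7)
The two auxiliary claims at the end of the lemma are quick. If $\xi = 0$, both equations in \eqref{eq:one_atom_system_general} reduce to $0 = 0$, so $(0, c)$ is a solution for every $c \in \R$. If $c = \pi/2 + \pi k$ for some $k \in \Z$, then $\cos c = 0$ and $\sin c = \pm 1$; the first equation of \eqref{eq:one_atom_system_general} then forces $\sin(\xi + c) = \sin c = \pm 1$, whence $\cos(\xi + c) = 0$. The second equation becomes $\beta\xi^{2}\sin(\xi + c) = 0$, which, combined with $\sin(\xi+c) \neq 0$ and $\beta \neq 0$, forces $\xi = 0$. So in the main iff statement no solution has $\cos c = 0$.

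For the main content I would assume $\xi \neq 0$ and $\cos c \neq 0$. Dividing the second equation of \eqref{eq:one_atom_system_general} by $\xi$ and rearranging both equations yields
\begin{align*}
\sin(\xi + c) &= \sin c - \beta\xi\cos c,\\
\cos(\xi + c) &= (1 - \beta^{2}\xi^{2})\cos c + \beta\xi\sin c.
\end{align*}
Imposing $\sin^{2}(\xi + c) + \cos^{2}(\xi + c) = 1$, expanding and cancelling the $\sin^{2} c + \cos^{2} c = 1$ contribution, the identity simplifies to $\beta^{2}\xi^{2}\cos^{2} c - 2\beta\xi\sin c\cos c + \sin^{2}c - \cos^{2}c + \beta^{2}\xi^{2}\cos^{2}c \cdot(\cdots) = 0$; after dividing by $\beta^{2}\xi^{2}$ and then by $\cos^{2} c$ it collapses to
\begin{align*}
(\beta\xi - \tan c)^{2} = 1.
\end{align*}
Hence $\beta\xi = \tan c \mp 1$, which already produces the two families $\pm$ appearing in $\xi_{k}^{\pm} = (\tan(c_{k}^{\pm}) \mp 1)/\beta$.

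Back-substituting $\beta\xi = \tan c - 1$ into the displayed equations above gives $\sin(\xi + c) = \cos c$ and $\cos(\xi + c) = \sin c$, which forces $\xi + c = \pi/2 - c + 2\pi k$ for some $k \in \Z$. Combining with $\xi = (\tan c - 1)/\beta$ yields exactly the defining equation for $c_{k}^{+}$, so $(\xi, c) = (\xi_{k}^{+}, c_{k}^{+})$. For the other sign, $\beta\xi = \tan c + 1$ gives $\sin(\xi + c) = -\cos c$ and $\cos(\xi + c) = -\sin c$, hence $\xi + c = 3\pi/2 - c + 2\pi k$; after reindexing $k \mapsto k + 1$ this is the defining equation for $c_{k}^{-}$, so $(\xi, c) = (\xi_{k}^{-}, c_{k}^{-})$. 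The reverse direction is a direct verification.

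For the count and uniqueness of $c_{k}^{\pm}$ in $(-\pi/2, \pi/2)$, let $g(c)$ denote $\tan c$ minus the right-hand side of \eqref{eq:def_ck}, so $g'(c) = \sec^{2} c + 2\beta$. For $\beta > 0$ this is strictly positive, so $g$ is a strictly increasing surjection $(-\pi/2, \pi/2) \to \R$ and has exactly one zero $c_{k}^{\pm}$. For $\beta < 0$, one computes $g''(c) = 2\sec^{2}c \tan c$, which is negative on $(-\pi/2, 0)$ and positive on $(0, \pi/2)$; thus $g$ is concave then convex, and such a function has at most three zeros, giving $|C_{k}^{\pm}| \leq 3$. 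The main technical obstacle is the algebraic collapse to $(\beta\xi - \tan c)^{2} = 1$, and the careful sign and index bookkeeping needed to align the $-$ case (via the shift $k \mapsto k + 1$) with the statement of \eqref{eq:def_ck}.
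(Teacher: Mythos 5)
Your proposal is correct and follows essentially the same route as the paper: reduce to the quadratic $(\beta\xi-\tan c)^2=1$ via the Pythagorean identity, split into the two families $\beta\xi=\tan c\mp 1$, and back-substitute to obtain the transcendental equation \eqref{eq:def_ck} for $c$. The one small improvement is that by carrying explicit expressions for both $\sin(\xi+c)$ and $\cos(\xi+c)$ you pin down $\xi+c$ modulo $2\pi$ directly and so avoid the paper's four-case analysis, in which two spurious cases must be discarded by substitution back into the original system.
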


\begin{proof}
The backwards implication follows by substituting the given values for $\xi$ and $c$ directly into \eqref{eq:one_atom_system_general}.  We now show the forwards implication.  Substituting the first equation of \eqref{eq:one_atom_system_general} into the second equation of \eqref{eq:one_atom_system_general}, and using the identity $\cos^{2}(\arcsin(x)) = 1 -x^{2}$, for $x \in [-1, 1]$, we obtain
\begin{align*}
(\beta \xi  \sin(c) - \beta^{2} \xi^{2} \cos(c) + \cos(c))^{2} = 1 - (\sin(c) - \beta \xi \cos(c))^{2}.
\end{align*}
From this it follows that $\cos(c) \neq 0$ and hence,
\begin{align*}
\beta^{2} \xi^{2} - 2 \tan(c) \beta \xi + (\tan^{2}(c) - 1) = 0.
\end{align*}
Thus, we have that either $\xi = 0$ and $c = \pm \pi/4 + \pi k$, for some $k \in \Z$, or that $\beta \xi = \tan(c) \pm 1$.  In the case $\xi \neq 0$, substituting this value into the first equation of \eqref{eq:one_atom_system_general} yields $( \tan(c) \pm 1 ) \cos(c) = \sin(c) - \sin ( (\tan(c) \pm 1)/\beta + c )$, or equivalently,
\begin{align*}
\cos(c) = \sin \left( \frac{\mp \tan(c) - 1}{\beta} \mp c \right).
\end{align*}
This leads to the following four cases:
\begin{enumerate}
\item\label{case1} $\beta \xi = \tan(c) + 1$ and $\tan(c) = -2 c \beta - \beta\pi/2 + 2 \pi \beta k - 1$, for $k \in \Z$, 
\item\label{case2} $\beta \xi = \tan(c) + 1$ and $\tan(c) = - \beta\pi/2 + 2 \pi \beta k - 1$, for $k \in \Z$,
\item\label{case3} $\beta \xi = \tan(c) - 1$ and $\tan(c) = -2 c \beta + \beta\pi/2 + 2 \pi \beta k + 1$, for $k \in \Z$, or
\item\label{case4} $\beta \xi = \tan(c) - 1$ and $\tan(c) = \beta\pi/2 + 2 \pi \beta k + 1$, for $k \in \Z$.
\end{enumerate}
By substituting these values into \eqref{eq:one_atom_system_general}, one sees that Cases \ref{case1} and \ref{case3} yield solutions to \eqref{eq:one_atom_system_general}. Cases \ref{case2} and \ref{case4} do not yield solutions, except when $c = 0$, but this is the same solution given by Cases \ref{case1} and \ref{case3} when $c = 0$. 

The last statement follows by substituting the given values for $\xi$ and $c$ directly into \eqref{eq:one_atom_system_general}.
\end{proof}

By \eqref{eq:eigenfunction_form}, if $\Delta_{\eta} f = \lambda f$, for some $\lambda \in \R$, then there exist $a, b \in \R$ and $\gamma \in (-\pi/2, \pi/2]$ such that $\lambda = -b^2$ and $f(x) = a \sin(b x + \gamma)$ for $x \in \I$. By linearity, without loss of generality we may assume $a = 1$. Further, \eqref{eq:system_general} and \eqref{eq:system_general_N} imply that $f$ is an eigenfunction of $\Delta_{\eta}$ if and only if $b$ and $\gamma$ satisfy the following system of equations.
\begin{align}\label{eq:system1}
\begin{aligned}
&\alpha b \cos(\gamma) = \sin(\gamma) - \sin(b + \gamma)\\
&\alpha b^{2} \sin(b + \gamma) = b \cos(b + \gamma) - b \cos(\gamma)
\end{aligned}
\end{align}
Thus, if $f$ is non-constant, then $\gamma \neq \pi/2$. Suppose, by way of contradiction, that $\gamma = \pi/2$. In this case \eqref{eq:system1} implies that $0 = 1 - \sin(b + \pi/2)$ and $\alpha b^{2} \sin(b + \pi/2) = b \cos(b + \pi/2)$.  The first yields that $b = 2 \pi n$, for some $n \in \Z$.  Substituting this value for $b$ into the second equation yields $\alpha (2 \pi n)^{2} = 0$, and so $n = 0$. Hence, $b = 0$, in which case $f = \mathds{1}$.

For $k \in \Z$, let $\gamma = \gamma^{(k,1)}(\alpha)$ denote the unique solution in the interval $(-\pi/2, \pi/2)$ to $\tan(\gamma) = -2\gamma \alpha + \alpha \pi/2 + 2 \pi \alpha k + 1$, and set $b^{(k,1)}(\alpha) \coloneqq  -2\gamma^{(k,1)}(\alpha) + \pi/2 +2 \pi k$.  As we will shortly see, $b^{(k,1)}$ and $\gamma^{(k,1)}$ completely determine the eigenfunctions and eigenvalues of $\Delta_{\eta}$.  We have introduced the extra index $1$ to indicate that they give rise to solutions to the eigenvalue problem when $\eta$ has a single atom; this will become important in the subsequent section where we consider measures with two atoms.

Notice, if $k = 0$, then $\gamma^{(k,1)} = \pi/4$ and $b^{(k,1)} = 0$; if $b^{(k,1)} = 0$, then $\gamma^{(k,1)} = \pi/4$ and $k = 0$; if $\gamma^{(k,1)} = \pi/4$, then $k = 0$ and $b^{(k,1)} = 0$.

\begin{thm}\label{thm:eigenvalues_and_functions}
The eigenvalues of $\Delta_{\eta}$ are $\lambda^{(k,1)} = - (b^{(k,1)}(\alpha))^{2}$ for $k \in \Z$, with corresponding eigenfunctions $f^{(k,1)}(x) \coloneqq \sin(b^{(k,1)}(\alpha) x + \gamma^{(k,1)}(\alpha))$.  Further, each eigenvalue has multiplicity one.
\end{thm}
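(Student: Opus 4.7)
The proof combines the reduction from \Cref{thm:main2} with the explicit solution list from \Cref{lem:sol_equations}, followed by a monotonicity argument for simplicity of the spectrum.

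First, since $N=1$ and we may assume $z_1 = 1$, the system appearing in \Cref{thm:main2} has only the $j = N$ block \eqref{eq:system_general_N} (with $a_N = a_1$ and $\gamma_N = \gamma_1$), which after dividing out $a_1$ becomes precisely \eqref{eq:system1}. Every eigenfunction therefore has the form $f(x) = \sin(bx+\gamma)$ (normalized to $a=1$) with $\gamma \in (-\pi/2, \pi/2]$ and $\lambda = -b^{2}$. The paragraph preceding the theorem rules out $\gamma = \pi/2$ unless $b = 0$, so one seeks $(b,\gamma) \in \R \times (-\pi/2, \pi/2)$ solving \eqref{eq:system1}.

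Next I would apply \Cref{lem:sol_equations} with $\beta = \alpha > 0$. The ``$+$''-branch defining equation \eqref{eq:def_ck}, namely $\tan c = -2c\alpha + \alpha\pi/2 + 2\pi\alpha k + 1$, is precisely the equation characterizing $\gamma^{(k,1)}$; since the map $\gamma \mapsto \tan\gamma + 2\alpha\gamma$ is strictly increasing on $(-\pi/2, \pi/2)$ and the right-hand side increases by $2\pi\alpha$ as $k$ advances by one, the solution $\gamma^{(k,1)}$ is unique and strictly increasing in $k$. Direct substitution yields $c_k^{+} = \gamma^{(k,1)}$ and $\xi_k^{+} = (\tan \gamma^{(k,1)} - 1)/\alpha = b^{(k,1)}$. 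For the ``$-$''-branch, the change of variable $c \mapsto -c$ turns its defining equation into the ``$+$''-branch equation with index $-k$, giving $(\xi_k^{-}, c_k^{-}) = (-b^{(-k,1)}, -\gamma^{(-k,1)})$; the corresponding eigenfunction is $\sin(-b^{(-k,1)} x - \gamma^{(-k,1)}) = -f^{(-k,1)}(x)$, so the ``$-$''-branch contributes no new eigenfunctions and $\{f^{(k,1)}\}_{k \in \Z}$ exhausts the eigenfunctions of $\Delta_\eta$.

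For simplicity, the strict monotonicity of $k \mapsto \gamma^{(k,1)}$ yields $b^{(k+1,1)} - b^{(k,1)} = -2(\gamma^{(k+1,1)} - \gamma^{(k,1)}) + 2\pi \in (0,2\pi)$, so $k \mapsto b^{(k,1)}$ is strictly increasing and the coincidence $b^{(k,1)} = b^{(k',1)}$ for distinct $k,k'$ is excluded. To rule out $b^{(k,1)} = -b^{(k',1)}$ for $k \neq k'$, I would rewrite this as $\gamma^{(k,1)} + \gamma^{(k',1)} = \pi/2 + \pi(k+k')$; combined with $\gamma^{(\cdot,1)} \in (-\pi/2,\pi/2)$ this forces $k+k' \in \{-1, 0\}$. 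In either subcase, substituting the resulting expression for $\gamma^{(k',1)}$ into its defining equation and using $\tan(\pi/2 - \gamma) = \cot\gamma$, respectively $\tan(-\pi/2 - \gamma) = \cot\gamma$, reduces after brief algebra to $1 - A^2 = 1$ with $A := -2\gamma^{(k,1)}\alpha + \alpha\pi/2 + 2\pi\alpha k$; hence $A = 0$, which forces $\tan\gamma^{(k,1)} = 1$ and then $k = 0$, contradicting $k \neq k'$. I expect this final case analysis, which harmonizes the sign branches with the two integer subcases, to be the main technical obstacle.
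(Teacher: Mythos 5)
Your proposal is correct and follows essentially the same route as the paper: reduce to the system \eqref{eq:system1}, invoke \Cref{lem:sol_equations} with $\beta=\alpha$ so that $c_{k}^{+}=\gamma^{(k,1)}$ and $\xi_{k}^{+}=b^{(k,1)}$, and absorb the minus branch via $(\xi_{k}^{-},c_{k}^{-})=(-\xi_{-k}^{+},-c_{-k}^{+})$, which only reproduces $-f^{(-k,1)}$. Your case analysis excluding $b^{(k,1)}=\pm b^{(k',1)}$ for $k\neq k'$ is a welcome elaboration of the multiplicity-one claim, which the paper disposes of in a single (rather terse) sentence following the theorem.
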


\begin{proof}
This is a direct consequence of \Cref{lem:sol_equations} with $\beta =\alpha$, $c_{k}^{+} =  \gamma^{(k,1)}$ and $\xi_{k}^{+} = b^{(k,1)}$ and the observation that $-c_{-k}^{-} = c_{k}^{+}$ and $-\xi_{-k}^{-} = \xi_{k}^{+}$.
\end{proof}

Note, the only eigenfunction $f^{(k,1)}$ with $\mathbf{f}^{(k,1)}$ continuous is the constant function $f^{(0,1)}$.  Indeed, if there exists $k \in \Z \setminus \{ 0 \}$ with $\mathbf{f}^{(k,1)}$ continuous, then $\sin( b^{(k,1)} + \gamma^{(k,1)}) = \sin( \gamma^{(k,1)})$, and so, by \eqref{eq:system1}, we would have $\cos(\gamma^{(k,1)}) = 0$ as $b^{(k,1)} \neq 0$, contradicting the fact that $\gamma = \gamma^{(k,1)}(\alpha) \in (-\pi/2, \pi/2)$. Further, for $k_{1},k_{2} \in \Z$ with $k_{1} \neq k_{2}$, by definition  $\gamma^{(k_{1},1)} \neq \gamma^{(k_{2},1)}$ and, since $\gamma^{(k,1)} \in (-\pi/2,\pi/2)$ for $k \in \Z \setminus \{ 0 \}$, we have $(b^{(k_{1},1)})^{2} \neq (b^{(k_{2},1)})^{2}$. Hence, all eigenvalues have multiplicity one.

\begin{cor}\label{cor:evcf_1atom}
Letting $N_{\eta} \colon \R^{+} \to \R$ denote the eigenvalue counting function of $-\Delta_{\eta}$, we have $\displaystyle \lim_{x \to \infty} \frac{\pi N_{\eta}(x)}{\sqrt{x}} = 1$.
\end{cor}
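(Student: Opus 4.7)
The plan is to read off the eigenvalue counting function directly from the explicit spectrum furnished by \Cref{thm:eigenvalues_and_functions}: since that result gives $-\Delta_\eta$ precisely the eigenvalues $\{(b^{(k,1)}(\alpha))^2 : k \in \Z\}$, each of multiplicity one, one has
\begin{align*}
N_\eta(x) = \#\left\{k \in \Z : (b^{(k,1)}(\alpha))^2 \leq x\right\}.
\end{align*}
It therefore suffices to understand the asymptotic growth of $b^{(k,1)}(\alpha)$ as $|k| \to \infty$.

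To that end I would use the defining formula $b^{(k,1)}(\alpha) = -2\gamma^{(k,1)}(\alpha) + \pi/2 + 2\pi k$ together with the fact that $\gamma^{(k,1)}(\alpha) \in (-\pi/2, \pi/2)$ to obtain the uniform two-sided bound $\bigl|b^{(k,1)}(\alpha) - 2\pi k\bigr| \leq 3\pi/2$ for all $k \in \Z$. Setting $C = 3\pi/2$, the inequality $(b^{(k,1)}(\alpha))^2 \leq x$ is implied by $|k| \leq (\sqrt{x} - C)/(2\pi)$ and in turn implies $|k| \leq (\sqrt{x} + C)/(2\pi)$. Each of these integer intervals has cardinality $\sqrt{x}/\pi + O(1)$, so $N_\eta(x) = \sqrt{x}/\pi + O(1)$; dividing by $\sqrt{x}/\pi$ and letting $x \to \infty$ yields the desired limit.

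No substantial obstacle is anticipated, because \Cref{thm:eigenvalues_and_functions} has already done the heavy lifting by producing an explicit, simple enumeration of the spectrum. The only point requiring any care is verifying that the correction term $-2\gamma^{(k,1)}(\alpha) + \pi/2$ in the formula for $b^{(k,1)}(\alpha)$ is bounded uniformly in $k$, which is immediate from the defining constraint $\gamma^{(k,1)}(\alpha) \in (-\pi/2, \pi/2)$ and requires no further analysis of the transcendental equation defining $\gamma^{(k,1)}(\alpha)$.
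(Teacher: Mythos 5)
Your argument is correct and is essentially the one the paper intends: the corollary is stated without a separate proof, but the surrounding observations (in particular $b^{(k,1)}(\alpha) = -2\gamma^{(k,1)}(\alpha) + \pi/2 + 2\pi k$ with $\gamma^{(k,1)}(\alpha) \in (-\pi/2,\pi/2)$, hence $b^{(k,1)}/(2\pi k) \to 1$, together with the multiplicity-one statement of \Cref{thm:eigenvalues_and_functions}) are exactly the ingredients you use. Your uniform bound $\lvert b^{(k,1)}(\alpha) - 2\pi k\rvert \leq 3\pi/2$ and the resulting count $N_{\eta}(x) = \sqrt{x}/\pi + O(1)$ make this fully precise.
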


In contrast to the case when $\eta$ is atomless, the eigenvalues of $\Delta_{\eta}$ do not occur in pairs. Indeed, let $\lambda_{k}$ denote the $k$-th largest eigenvalue of $\Delta_{\eta}$, then
\begin{enumerate}
\item $\displaystyle \lim_{k \to \infty} \lambda_{k} / (k \pi +\pi/2)^2 = -1$,\\
\item $\displaystyle \lim_{k \to +\infty} \gamma^{(k,1)} = - \lim_{k \to -\infty} \gamma^{(k,1)} = \pi/2$, and\\
\item $\displaystyle \lim_{k \to +\infty} b^{(k,1)}/(2\pi k) = \lim_{k \to -\infty} b^{(k,1)}/(2\pi k) = 1$.
\end{enumerate}

\begin{ex}
For $\eta = \Lambda + \pi^{-1}\delta_{1}$ we have  $\lambda^{(1,1)} \approx -29.3$, $\lambda^{(2,1)} \approx -130.4$ and $\lambda^{(3,1)} \approx -309.1$; see \Cref{fig:ef_1_atom} for a graphical representation of $f^{(1,1)}$,  $f^{(2,2)}$ and  $f^{(3,1)}$.
\end{ex}

\begin{figure}[t]
\centering
\resizebox{0.875\linewidth}{!}{
\begin{subfigure}[t]{5.5cm}
\begin{tikzpicture}
  \draw[->] (-0.1,0) -- (4.5,0); 
  \draw[->] (0,-2.75) -- (0,2.75);
  \draw[thick,smooth] (4,-0.05) -- (4,0.05) node[below] {$1$};   
    \draw[thick,smooth] (-0.05,2) -- (0.05,2) node[left] {$1$};  
    \draw[thick,smooth] (-0.05,-2) -- (0.05,-2) node[left] {$-1$}; 
  \draw[thick,domain=0:4,smooth, variable=\a] plot (\a,{2*sin ((5.416)*(\a/4 r) + 1.219 r )});
  \filldraw (4,0.689) circle (1.4pt);
  \draw (0,1.878) circle (1.4pt);
  \draw[dashed,smooth] (4,0.1) -- (4,0.689);  
   \end{tikzpicture}
   \subcaption{Graph of $f^{(1,1)}$}
   \end{subfigure}
\qquad
\begin{subfigure}[t]{5.5cm}
\begin{tikzpicture}
  \draw[->] (-0.1,0) -- (4.5,0); 
  \draw[->] (0,-2.75) -- (0,2.75);
  \draw[thick,smooth] (4,-0.05) -- (4,0.05) node[below] {$1$};   
    \draw[thick,smooth] (-0.05,2) -- (0.05,2) node[left] {$1$};  
    \draw[thick,smooth] (-0.05,-2) -- (0.05,-2) node[left] {$-1$}; 
  \draw[thick,domain=0:4,smooth, variable=\a] plot (\a,{2*sin ((11.421)*(\a/4 r) + 1.358 r )});
  \filldraw (4,0.422) circle (1.4pt);
  \draw (0,1.955) circle (1.4pt);
  \draw[dashed,smooth] (4,0.1) -- (4,0.422);  
   \end{tikzpicture}
   \subcaption{Graph of $f^{(2,1)}$}
   \end{subfigure}
\qquad
\begin{subfigure}[t]{5.5cm}
\begin{tikzpicture}
  \draw[->] (-0.1,0) -- (4.5,0); 
  \draw[->] (0,-2.75) -- (0,2.75);
  \draw[thick,smooth] (4,-0.05) -- (4,0.05) node[below] {$1$};   
    \draw[thick,smooth] (-0.05,2) -- (0.05,2) node[left] {$1$};  
    \draw[thick,smooth] (-0.05,-2) -- (0.05,-2) node[left] {$-1$}; 
  \draw[thick,domain=0:4,smooth, variable=\a] plot (\a,{2*sin ((17.580)*(\a/4 r) + 1.420 r )});
  \filldraw (4,0.3) circle (1.4pt);
  \draw (0,1.977) circle (1.4pt);
  \draw[dashed,smooth] (4,0.1) -- (4,0.30);  
   \end{tikzpicture}
   \subcaption{Graph of $f^{(3,1)}$}
   \end{subfigure}}
\caption{Graphs of the eigenfunctions $f^{(k,1)}$ of $\Delta_{\eta}$ for $k \in \{1,2,3\}$, where $\eta = \Lambda + \pi^{-1} \delta_{1}$.}\label{fig:ef_1_atom}
\end{figure}
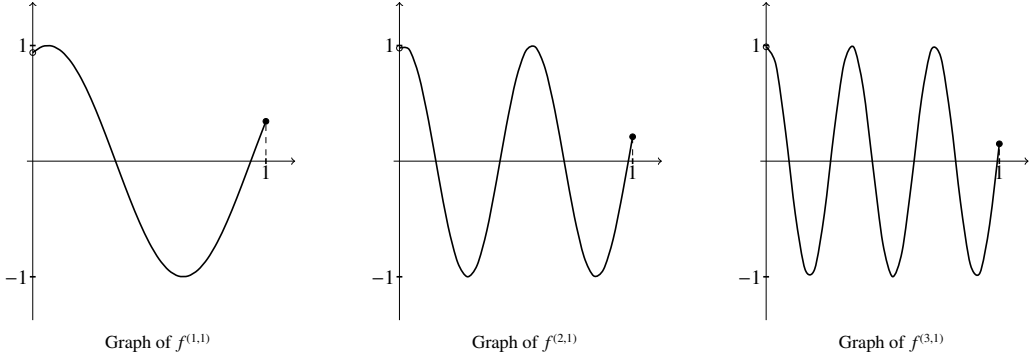

\subsection{$\mathbf{N = 2}$: Uniformly distributed Dirac point masses}\label{sec:3-2}

Let $\alpha$ denote a positive real number, let $z_{1}, z_{2} \in (0, 1]$ be such that $z_{2} - z_{1} = 1/2$ and let $\eta = \Lambda + \sum_{i = 1}^{2} \alpha \delta_{i/2}$.  As in \Cref{sec:N=1}, without loss of generality, we may assume that $z_{2} = 1$, and hence that $z_{1} = 1/2$.  The main results of this section are \Cref{thm:two_atom_case_thm} and \Cref{cor:evcf_2atom}, in which we determine the eigenvalues and eigenfunctions of $\Delta_{\mu}$.

By \eqref{eq:eigenfunction_form}, if $\Delta_{\eta} f = \lambda f$, for some $\lambda \in \R$, then there exist $b, a_{1}, a_{2}, \gamma_{1}, \gamma_{2} \in \R$ with $\gamma_{1} \in ( -\pi/2, \pi/2 ]$ such that
\begin{align*}
f(x) &=
\begin{cases}
a_{1} \sin(b x + \gamma_{1}) & \text{if} \; x \in (0, 1/2],\\
a_{2} \sin(b x + \gamma_{2}) & \text{if} \; x \in (1/2, 1].
\end{cases}
\end{align*}
\Cref{cor:constant_ef_a_nonzero} implies the constant function $\mathds{1}$ is an eigenfunction of $\Delta_{\eta}$ and the eigenspace $E_{0} = \{ f \in \mathscr{D}_{\eta}^{2} \colon \Delta_{\eta} f \equiv 0 \}$ is one-dimensional. In other words, if $b = 0$, then $f \in E_{0}$.

With this at hand we may assume that $b \neq 0$.  From the system of equations given in \eqref{eq:system_general} and \eqref{eq:system_general_N} it follows that if $f$ is an eigenfunction, then $a_{1}, a_{2}, b, \gamma_{1}, \gamma_{2}$ fulfil the following equations.  
\begin{align}\label{system_2}
\begin{aligned}
&\alpha b a_{1} \cos(\gamma_{1}) = a_{1} \sin(\gamma_{1}) - a_{2} \sin(b + \gamma_{2})\\
&\alpha b a_{2} \sin(b + \gamma_{2}) = a_{2} \cos(b + \gamma_{2}) - a_{1}\cos(\gamma_{1})\\
&\alpha b a_{2} \cos(b/2 + \gamma_{2}) = a_{2} \sin(b/2 + \gamma_{2}) - a_{1} \sin(b/2 + \gamma_{1})\\
&\alpha b a_{1} \sin(b/2 + \gamma_{1}) = a_{1} \cos(b/2 + \gamma_{1}) - a_{2}\cos(b/2 + \gamma_{2})
\end{aligned}
\end{align}
As discussed directly above \Cref{cor:constant_ef_a_nonzero}, we have $a_{1}, a_{2} \neq 0$, since otherwise \eqref{system_2} yields $f \equiv 0$.

By \eqref{eq:rotation}, we have that
\begin{align}
f_{2}(x) &= \begin{cases}
a_{2} \sin(b(x + 1/2) + \gamma_{2}) & \text{if} \; x \in (0, 1/2],\\
a_{1}\sin(b(x - 1/2) + \gamma_{1}) & \text{if} \; x \in (1/2, 1],
\end{cases}\label{eq:rotation_eigenfunction}
\end{align}
is also an eigenfunction of $\Delta_{\eta}$. Hence, without loss of generality, we may assume $a_{1} = 1$ and $\lvert a_{2} \rvert \leq 1$. Our aim is to find all tuples $(b,a_{2},\gamma_{1},\gamma_{2}) \in \R^{4}$ such that $f$ is a non-constant eigenfunction. We start with the special cases that $a_{2}=1$ and $b=\pm1/\alpha$. In the second step we discuss the case $a_{2}=1$ and $b \neq \pm1/\alpha$. Noting that $(b,a_{2},\gamma_{1},\gamma_{2})$ leads to an eigenfunction if and only if $(b,-a_{2},\gamma_{1},\gamma_{2} + \pi)$ does, solves the case $a_{2} = -1$. We then show that $f$ is not an eigenfunction if $\lvert a_{2} \rvert < 1$.

Suppose that $a_{2}=1$ and $b = -1/\alpha$. The first two equations in \eqref{system_2} implies that $\sin(\gamma_{1}) = -\cos(b+\gamma_{2})$. This yields that $b+\gamma_{2} = \gamma_{1} +\pi/2 +2 \pi k$ for some $k \in \Z$. Substituting this into the first equation of \eqref{system_2} implies that $\gamma_{1}=0$ and hence, $\gamma_{2} = -b + \pi/2 + 2 \pi k$ for some $k \in \Z$.  Subsequently, the third and fourth equations of \eqref{system_2} yields that $f$ is an eigenfunction if and only if $\alpha = 1/(\pi + 2 \pi m)$ for some $m\in \Z$, in which case $\gamma_{2} \bmod 2\pi =3 \pi/2$.

One can show in a similar manner, that if $a_{2}=1$ and $b = 1/\alpha$, then $f$ is an eigenfunction of $\Delta_{\eta}$ if and only if $\alpha = 1/(2\arctan(1/2)-2\arctan(2) + 2 \pi m)$ for some $m \in \Z$; in which case 
\begin{align*}
\gamma_{1}=\arctan(2)
\quad \text{and} \quad
\gamma_{2} = \arctan(2) -2\arctan(1/2) + \pi/2 \bmod 2\pi.
\end{align*}

\begin{cor}\label{cor:ef_special_cases}
If there exists $m\in \N_{0}$ with $\alpha = \alpha^{\prime} \coloneqq 1/(\pi + 2 \pi m)$ or $\alpha = \alpha^{\prime\prime} \coloneqq 1/(2\arctan(1/2)-2\arctan(2) + 2 \pi m)$, then $\lambda = -1/\alpha^{2}$ is an eigenvalue of $\Delta_{\eta}$ with multiplicity one. The corresponding eigenfunction is 
\begin{align*}
f(x) &=
\begin{cases}
\sin(-(1/\alpha) x) & \text{if} \; x \in (0, 1/2] \, \text{and} \, \alpha= \alpha^{\prime},\\
\sin(-(1/\alpha) x + 3\pi/2) & \text{if} \; x \in (1/2, 1] \, \text{and} \, \alpha= \alpha^{\prime},\\
\sin((1/\alpha) x + \arctan(2)) & \text{if} \; x \in (0, 1/2]  \, \text{and} \, \alpha=\alpha^{\prime\prime},\\
\sin((1/\alpha) x + \arctan(2) -2\arctan(1/2) + \pi/2) & \text{if} \; x \in (1/2, 1]  \, \text{and} \, \alpha=\alpha^{\prime\prime}.
\end{cases}
\end{align*}
\end{cor}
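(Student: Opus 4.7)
The proof essentially formalizes the case analysis laid out in the paragraphs immediately preceding the corollary. My plan is to restrict the general system \eqref{system_2} (with $\alpha_1 = \alpha_2 = \alpha$ and $z_1 = 1/2$, $z_2 = 1$) to the two special configurations $a_1 = a_2 = 1$ together with $b = -1/\alpha$ or $b = 1/\alpha$, to solve completely for the remaining parameters $(\gamma_1, \gamma_2)$ together with the compatibility condition on $\alpha$, and finally to address the multiplicity-one claim.

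For the subcase $b = -1/\alpha$, the plan is to combine the first two equations of \eqref{system_2} to derive the identity $\sin(\gamma_1) = -\cos(b+\gamma_2)$, to conclude $b + \gamma_2 \equiv \gamma_1 + \pi/2 \pmod{2\pi}$, and then to substitute this back to force $\gamma_1 = 0$. Inserting the resulting values into the third and fourth equations of \eqref{system_2} then reduces to a single trigonometric compatibility condition on $\alpha$, whose solution set is exactly $\alpha = 1/(\pi + 2\pi m)$ for $m \in \N_0$, in which case $\gamma_2 \bmod 2\pi = 3\pi/2$. The subcase $b = 1/\alpha$ is handled analogously; here the calculations simplify using $\arctan(2) + \arctan(1/2) = \pi/2$ together with the standard addition formulae for sine and cosine, yielding the stated values of $\gamma_1$, $\gamma_2$ and $\alpha''$. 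Once each candidate tuple $(b, a_1, a_2, \gamma_1, \gamma_2)$ has been identified, a direct substitution into \eqref{system_2} confirms sufficiency, so that the resulting piecewise functions are indeed eigenfunctions of $\Delta_\eta$ in the sense of \Cref{thm:main2}.

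For the multiplicity-one claim, I would argue as follows. Any eigenfunction with eigenvalue $-1/\alpha^{2}$ must satisfy $b^{2} = 1/\alpha^{2}$, so $b = \pm 1/\alpha$. The two admissibility conditions $\alpha = \alpha'$ and $\alpha = \alpha''$ are mutually exclusive: the $\alpha'$-values are rational multiples of $1/\pi$, while the $\alpha''$-values involve $\arctan(1/2)$ and hence are not such rational multiples. Consequently, for each permissible $\alpha$, only one sign of $b$ actually occurs. Within that sign, the case analysis above pins down $\gamma_1$ and $\gamma_2$ uniquely modulo $2\pi$, so the eigenfunction is determined up to an overall scalar factor; a short check using the rotational symmetry \eqref{eq:rotation_eigenfunction} shows that this symmetry sends the produced $f$ to $\pm f$, so no linearly independent copy arises.

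I expect the main obstacle to be the multiplicity-one verification rather than the trigonometric solution of the system itself. Ensuring the mutual exclusivity of $\alpha'$ and $\alpha''$ ultimately appeals to the irrationality of $\arctan(1/2)/\pi$ (a classical consequence of Niven's theorem), and confirming that \eqref{eq:rotation_eigenfunction} does not yield a second linearly independent eigenfunction requires careful bookkeeping of the integer parameters $k$ and $m$ modulo the appropriate periods. Neither step is deep, but both are easy to get wrong in the trigonometric computations and therefore deserve the most attention in the full write-up.
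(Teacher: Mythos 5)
Your identification of the eigenfunctions follows the paper's route exactly: for $a_{1}=a_{2}=1$ and $b=\mp 1/\alpha$ you combine the first two equations of \eqref{system_2} to obtain $\sin(\gamma_{1})=-\cos(b+\gamma_{2})$, pin down $\gamma_{1}$ and $\gamma_{2}$, and read off the compatibility condition on $\alpha$ from the remaining two equations. This part is fine, and a direct substitution confirms sufficiency. (Your appeal to Niven's theorem for the disjointness of the two families of admissible $\alpha$ is heavier than necessary: equating $1/\alpha'=\pi(2m'+1)$ with $1/\alpha''=4\arctan(1/2)-\pi+2\pi m$ forces $\arctan(1/2)$ to be a positive integer multiple of $\pi/2$, which already contradicts $0<\arctan(1/2)<\pi/4$.)

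The gap is in the multiplicity-one claim. You correctly reduce to $b=\pm 1/\alpha$ and rule out the wrong sign of $b$ via the disjointness of $\alpha'$ and $\alpha''$, but your assertion that ``the case analysis above pins down $\gamma_{1}$ and $\gamma_{2}$ uniquely'' applies only to eigenfunctions normalised to $a_{1}=a_{2}=1$ (together with the $a_{2}=-1$ reflection). Nothing in your argument excludes a second eigenfunction with the same eigenvalue $-1/\alpha^{2}$ but $\lvert a_{2}\rvert<1$ after the normalisation $a_{1}=1$, and the rotation symmetry \eqref{eq:rotation_eigenfunction} does not dispose of that case. The paper does not close this inside the corollary either: multiplicity one is justified only later in \Cref{sec:3-2}, where the special eigenfunctions are identified with members of the complete family ($f=f^{(-m-1,2)}$ for $\alpha=\alpha'$ and $f=f^{(m,2)}$ for $\alpha=\alpha''$) and where it is shown, via the orthonormal basis $\{f^{(2m,2)}\circ\psi\}$ of $L^{2}_{\Lambda+2\alpha\delta_{1}}$ and the resulting antisymmetry $h(x)=-h(x+1/2)$, that no eigenfunction with $\lvert a_{2}\rvert<1$ exists at all. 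To complete your write-up you must either import that argument or solve \eqref{system_2} directly with $b=\pm1/\alpha$ and unconstrained $a_{2}$ and show that $a_{2}^{2}=1$ is forced.
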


We now consider the case $a_{2} = 1$ and $b \in \R \setminus \{0, \pm1/\alpha \}$. From \eqref{system_2} it follows that $\cos(\gamma_{1}) \neq 0$ and $\cos(b/2 + \gamma_{2}) \neq 0$. This implies $f$ is discontinuous at the atoms and hence that $b+\gamma_{2}-\gamma_{1} \neq 0$ and $\gamma_{1}-\gamma_{2} \neq 0$; thus $\gamma_{1} \neq \pi/2$. Define $g \colon \I \to \R$ by $g(x) \coloneqq \sin ((b+\gamma_{2}-\gamma_{1})x+\gamma_{1})$ and set $\beta_{1} \coloneqq \alpha b / (b+\gamma_{2}-\gamma_{1})$. Setting $\beta = \beta_{1}$, $c=\gamma_{1}$ and $\xi = b+\gamma_{2}-\gamma_{1}$, the equalities in \eqref{system_2} imply those of \eqref{eq:one_atom_system_general}, and so there exists a $k \in \Z$ with $b+\gamma_{2} = -\gamma_{1} \pm \pi/2 + 2 \pi k$.  If $b+\gamma_{2} = -\gamma_{1} + \pi/2 + 2 \pi k$, then $\sin(b+\gamma_{2})= \cos(\gamma_{1})$ and $\sin(b/2+\gamma_{1})= \cos(b/2+\gamma_{2})$. Combining this with \eqref{system_2} yields
\begin{align}\label{eq:help3}
\begin{aligned}
\tan(\gamma_{1}) &= 1 + \alpha b, &\qquad
\tan(b+\gamma_{2}) &= 1/(1+\alpha b), &\\
\tan(b/2+\gamma_{2}) &= 1 + \alpha b, &\qquad
\tan(b/2+\gamma_{1}) &= 1/(1+\alpha b).&
\end{aligned}
\end{align}
For $k \in \Z$, let $\gamma_{1}^{(k,2)}$ denote the unique solution of $\tan (\gamma_{1}) = 1 - 4 \alpha \gamma_{1} + \alpha \pi +2 \pi \alpha k$.  Substituting the first equation of \eqref{eq:help3} into the last, we observe that $\gamma_{1} = \gamma_{1}^{(k,2)}$, $b=b^{(k,2)} = -4 \gamma_{1}^{(k,2)} + \pi + 2 \pi k$ and $\gamma_{2} = \gamma_{2}^{(k,2)} = -b^{(k,2)} -\gamma_{1}^{(k,2)} +\pi/2$, for some $k \in \Z$.  Observe that $\gamma_{1}^{(0,2)} = \pi/4$, and hence that $b^{(0,2)} = 0$, contradicting our initial assumption that $f$ is a non-constant eigenfunction. 

Similar to the case when $N=1$, if $b+\gamma_{2} = -\gamma_{1} - \pi/2 + 2 \pi k$, then analogue calculations yield the same eigenfunctions, namely that $b=-b^{(k,2)}$, $\gamma_{1} = -\gamma_{1}^{(k,2)}$ and $\gamma_{2} = -\gamma_{2}^{(k,2)}$

A direct calculation shows, for $x \in (0,1/2]$, that $\sin(b^{(k,2)} (x +1/2) + \gamma_{2}^{(k,2)}) = \sin(b^{(k,2)}x + \gamma_{1}^{(k,2)} - \pi k)$. This means that if $k$ is even, the eigenfunctions are periodic with period $1/2$, namely $f = f_{2}$ where $f_{2}$ is defined as in \eqref{eq:rotation_eigenfunction}. This means, as discussed in \Cref{cor:concatenation}, that these are concatenations of rescaled solutions for a measure with one atom, namely if $k=2m$, then $\gamma_{1}^{(k,2)}(\alpha) = \gamma^{(m,1)}(2 \alpha)$ and $b^{(k,2)}(\alpha) = 2 b^{(m,1)} (2 \alpha)$. On the other hand if $k$ is odd, then $f = -f_{2}$. Summarising, we have, for $k \in \Z \setminus \{ 0 \}$, that
\begin{align*}
f^{(k,2)}(x) =
\begin{cases}
\sin(b^{(k,2)} x +\gamma_{1}^{(k,2)}) & \text{if} \; x \in (0, 1/2],\\
\sin(b^{(k,2)}x  +\gamma_{2}^{(k,2)}) & \text{if} \; x \in (1/2, 1]
\end{cases} 
\end{align*}
is an eigenfunction of $\Delta_{\eta}$ with corresponding eigenvalue $\lambda^{(k,2)} = -(b^{(k,2)})^2$, and $f^{(0,2)} \coloneqq \mathds{1}$ is an eigenfunction of $\Delta_{\eta}$ with corresponding eigenvalue $\lambda^{(0,2)} \coloneqq 0$. 

If $\alpha$ is of the form discussed in \Cref{cor:ef_special_cases}, then the eigenfunction $f$ given there belongs to $\{ f^{(k,2)} \colon k\in \Z\}$.  Indeed, if there exists $m \in \N_{0}$ with $\alpha = 1/(\pi + 2 \pi m)$, then $f =  f^{(-m-1,2)}$, and if there exists $m \in \N$ with $\alpha = 1/(2\arctan(1/2)-2\arctan(2) + 2 \pi m)$, then $f =  f^{(m,2)}$.

To conclude, by way of contradiction, we show that there does not exist any other eigenfunction of $\Delta_{\eta}$ other than those discussed above. To this end, assume that $h$ is an eigenfunction of $\Delta_{\eta}$ in the orthogonal complement of $\operatorname{span} \{ f^{(k,2)} \colon k \in \Z \}$.  By the discussion following \eqref{eq:rotation}, we have
\begin{align*}
h(x) =
\begin{cases}
\sin(b x +\gamma_{1}) & \text{if} \; x \in (0, 1/2],\\
a_{2} \sin(b x  +\gamma_{2}) & \text{if} \; x \in (1/2, 1],
\end{cases} 
\end{align*}
for some $b \in \R$ and $a_{2}, \gamma_{1}, \gamma_{2} \in \R$ with $\lvert a_{2} \rvert < 1$. 
Letting $\psi(x) = x/2$ for all $x \in \R$, by definition and \Cref{thm:resolvent,thm:eigenvalues_and_functions}, we have $\{f^{(2m,2)} \circ \psi \colon m \in\Z \}$ forms an orthonormal basis for $L^{2}_{\Lambda + 2 \alpha \delta_{1}}$, and thus,  $\{f^{(2m,2)}\rvert_{(0,1/2]} \colon m \in\Z \}$ is an orthonormal basis for $L^{2}_{\Lambda \rvert_{(0,1/2]} +  \alpha \delta_{1/2}}$. This implies that $\langle h, f \rangle_{\eta} = 0$ for all $f \in \operatorname{span} \{ f^{(2m,2)} \colon m \in \Z \}$.  Using this, we obtain, for all Borel sets $A \subset (0, 1/2]$, that
\begin{align*}
\int \mathds{1}_{A}(x)  (h(x) + h(x+1/2) )  \, \mathrm{d}{\eta}(x) = 0.
\end{align*}
This yields $\eta(\{x \in (0,1/2] \colon h(x) > - h(x+1/2) \}) =\eta(\{x \in (0,1/2] \colon h(x) < -h(x+1/2) \})=0$. Hence, we have $h(x) = - h(x+1/2)$ for $\eta$-almost all $x \in (0, 1/2]$. Since $h \in \mathscr{D}_{\eta}^{2}$, it is left-continuous, which implies that $h(x) = - h(x+1/2)$ for all $x \in (0,1/2]$.  Therefore,
\begin{align*}
h(x) &=
\begin{cases}
\sin(b x + \gamma_{1}) & \text{if} \; x \in (0, 1/2],\\
-\sin(b (x-1/2) + \gamma_{1}) & \text{if} \; x \in (1/2, 1],
\end{cases}
\end{align*}
contradicting our initial assumption that $\lvert a_{2} \rvert < 1$.

For $k_{1},k_{2} \in \Z$ with $k_{1} \neq k_{2}$ we observe that $\gamma^{(k_{1},1)} \neq \gamma^{(k_{2},1)}$. Further, an elementary calculation shows that $(b^{(k_{1},1)})^{2} \neq (b^{(k_{2},1)})^{2}$, which implies all eigenvalues of $\Delta_{\eta}$ have multiplicity one.  Combining the above we obtain the following.

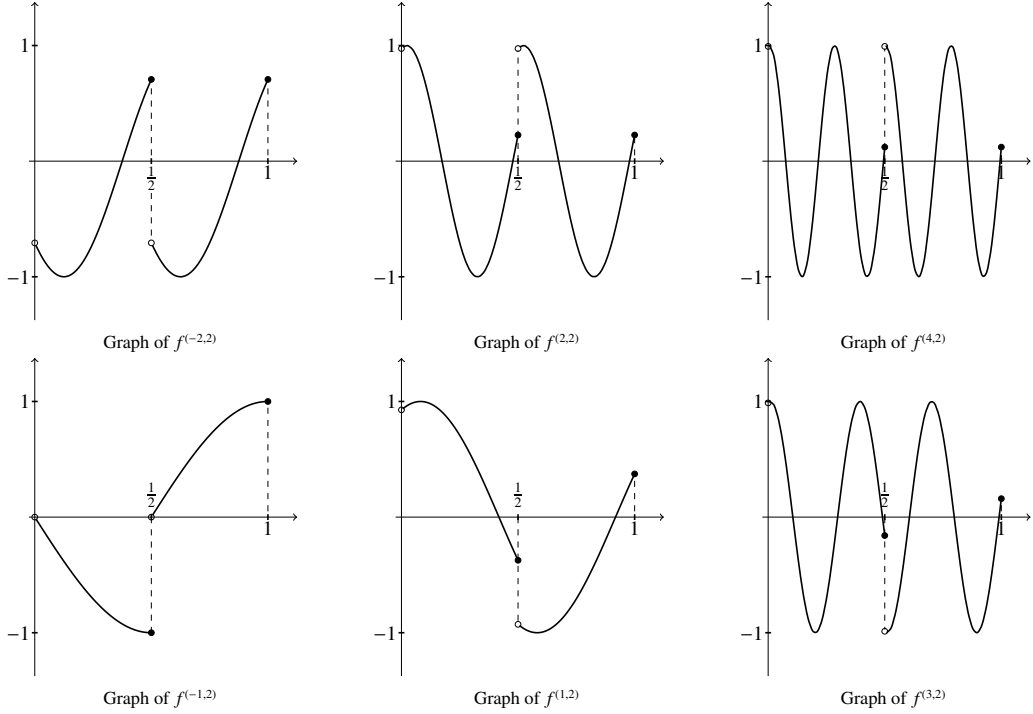
\begin{figure}[t]
\centering
\resizebox{0.875\linewidth}{!}{
\begin{subfigure}[t]{5.5cm}
\begin{tikzpicture}
  \draw[->] (-0.1,0) -- (4.5,0); 
  \draw[->] (0,-2.75) -- (0,2.75);
  \draw[thick,smooth] (2,-0.05) -- (2,0.05) node[below] {$\frac{1}{2}$};   
  \draw[thick,smooth] (4,-0.05) -- (4,0.05) node[below] {$1$};   
   \draw[thick,smooth] (-0.05,2) -- (0.05,2) node[left] {$1$};  
   \draw[thick,smooth] (-0.05,-2) -- (0.05,-2) node[left] {$-1$}; 
  \draw[thick,domain=0.02:2,smooth, variable=\a] plot (\a,{2*sin ((-6.283)*(\a/4 r) - 0.785 r )});
  \draw[thick,domain=2.02:4,smooth, variable=\a] plot (\a,{2*sin ((-6.283)*(\a/4 r) + 8.639 r )});  
  \filldraw (2,1.414) circle (1.4pt);
  \draw (0,-1.414) circle (1.4pt);
  \filldraw (4,1.414) circle (1.4pt);
  \draw (2,-1.414) circle (1.4pt);
  \draw[dashed,smooth] (4,0.1) -- (4,1.414);  
  \draw[dashed,smooth] (2,0.1) -- (2,1.414);  
  \draw[dashed,smooth] (2,-0.55) -- (2,-1.384);  
   \end{tikzpicture}
   \subcaption{Graph of $f^{(-2,2)}$} 
   \end{subfigure}
\qquad
\begin{subfigure}[t]{5.5cm}
\begin{tikzpicture}
  \draw[->] (-0.1,0) -- (4.5,0); 
  \draw[->] (0,-2.75) -- (0,2.75);
  \draw[thick,smooth] (2,-0.05) -- (2,0.05) node[below] {$\frac{1}{2}$};   
  \draw[thick,smooth] (4,-0.05) -- (4,0.05) node[below] {$1$};   
   \draw[thick,smooth] (-0.05,2) -- (0.05,2) node[left] {$1$};  
   \draw[thick,smooth] (-0.05,-2) -- (0.05,-2) node[left] {$-1$}; 
  \draw[thick,domain=0.025:2,smooth, variable=\a] plot (\a,{2*sin ((10.340)*(\a/4 r) + 1.342 r)});
  \draw[thick,domain=2.027:4,smooth, variable=\a] plot (\a,{2*sin ((10.340)*(\a/4 r) - 10.112 r )});  
  \filldraw (4,0.452) circle (1.4pt);
  \draw (0,1.948) circle (1.4pt);
  \filldraw (2,0.452) circle (1.4pt);
  \draw (2,1.948) circle (1.4pt);
  \draw[dashed,smooth] (2,0) -- (2,1.918);  
  \draw[dashed,smooth] (4,0.1) -- (4,0.452);   
   \end{tikzpicture}
   \subcaption{Graph of $f^{(2,2)}$}
   \end{subfigure}
\qquad
\begin{subfigure}[t]{5.5cm}
\begin{tikzpicture}
  \draw[->] (-0.1,0) -- (4.5,0); 
  \draw[->] (0,-2.75) -- (0,2.75);
  \draw[thick,smooth] (2,-0.05) -- (2,0.05) node[below] {$\frac{1}{2}$};   
  \draw[thick,smooth] (4,-0.05) -- (4,0.05) node[below] {$1$};   
   \draw[thick,smooth] (-0.05,2) -- (0.05,2) node[left] {$1$};  
   \draw[thick,smooth] (-0.05,-2) -- (0.05,-2) node[left] {$-1$}; 
  \draw[thick,domain=0.02:2,smooth, variable=\a] plot (\a,{2*sin ((22.478)*(\a/4 r) + 1.449 r )});
  \draw[thick,domain=2.02:4,smooth, variable=\a] plot (\a,{2*sin ((22.478)*(\a/4 r) - 22.356 r )});  
  \filldraw (4,0.243) circle (1.4pt);
  \draw (0,1.985) circle (1.4pt);
  \filldraw (2,0.243) circle (1.4pt);
  \draw (2,1.985) circle (1.4pt);
  \draw[dashed,smooth] (2,0) -- (2,1.945);  
  \draw[dashed,smooth] (4,0) -- (4,0.243);   
   \end{tikzpicture}
   \subcaption{Graph of $f^{(4,2)}$} 
   \end{subfigure}}
\centering
\resizebox{0.875\linewidth}{!}{
\begin{subfigure}[t]{5.5cm}
\begin{tikzpicture}
  \draw[->] (-0.1,0) -- (4.5,0); 
  \draw[->] (0,-2.75) -- (0,2.75);
  \draw[thick,smooth] (2,-0.05) -- (2,0.05) node[above] {$\frac{1}{2}$};   
  \draw[thick,smooth] (4,-0.05) -- (4,0.05) node[below] {$1$};   
   \draw[thick,smooth] (-0.05,2) -- (0.05,2) node[left] {$1$};  
   \draw[thick,smooth] (-0.05,-2) -- (0.05,-2) node[left] {$-1$}; 
  \draw[thick,domain=0.02:2,smooth, variable=\a] plot (\a,{2*sin ((-3.142)*(\a/4 r))});
  \draw[thick,domain=2.02:4,smooth, variable=\a] plot (\a,{2*sin ((-3.142)*(\a/4 r) + 4.712 r )});  
  \filldraw (4,2) circle (1.4pt);
  \draw (0,0) circle (1.4pt);
  \filldraw (2,-2) circle (1.4pt);
  \draw (2,0) circle (1.4pt);
  \draw[dashed,smooth] (2,0) -- (2,-2);  
  \draw[dashed,smooth] (4,0.1) -- (4,2);   
   \end{tikzpicture}
   \subcaption{Graph of $f^{(-1,2)}$} 
   \end{subfigure}
\qquad
\begin{subfigure}[t]{5.5cm}
\begin{tikzpicture}
  \draw[->] (-0.1,0) -- (4.5,0); 
  \draw[->] (0,-2.75) -- (0,2.75);
  \draw[thick,smooth] (2,-0.05) -- (2,0.05) node[above] {$\frac{1}{2}$};   
  \draw[thick,smooth] (4,-0.05) -- (4,0.05) node[below] {$1$};   
   \draw[thick,smooth] (-0.05,2) -- (0.05,2) node[left] {$1$};  
   \draw[thick,smooth] (-0.05,-2) -- (0.05,-2) node[left] {$-1$}; 
  \draw[thick,domain=0.02:2,smooth, variable=\a] plot (\a,{2*sin ((4.673)*(\a/4 r) + 1.188 r)});
  \draw[thick,domain=2.025:4,smooth, variable=\a] plot (\a,{2*sin ((4.673)*(\a/4 r) - 4.290 r )});  
  \filldraw (4,0.747) circle (1.4pt);
  \draw (0,1.855) circle (1.4pt);
  \filldraw (2,-0.747) circle (1.4pt);
  \draw (2,-1.855) circle (1.4pt);
  \draw[dashed,smooth] (2,0) -- (2,-1.815);  
  \draw[dashed,smooth] (4,0.1) -- (4,0.747);   
   \end{tikzpicture}
   \subcaption{Graph of $f^{(1,2)}$} 
   \end{subfigure}
\qquad
\begin{subfigure}[t]{5.5cm}
\begin{tikzpicture}
  \draw[->] (-0.1,0) -- (4.5,0); 
  \draw[->] (0,-2.75) -- (0,2.75);
  \draw[thick,smooth] (2,-0.05) -- (2,0.05) node[above] {$\frac{1}{2}$};   
  \draw[thick,smooth] (4,-0.05) -- (4,0.05) node[below] {$1$};   
   \draw[thick,smooth] (-0.05,2) -- (0.05,2) node[left] {$1$};  
   \draw[thick,smooth] (-0.05,-2) -- (0.05,-2) node[left] {$-1$}; 
  \draw[thick,domain=0.02:2,smooth, variable=\a] plot (\a,{2*sin ((16.347)*(\a/4 r) + 1.411 r)});
  \draw[thick,domain=2.03:4,smooth, variable=\a] plot (\a,{2*sin ((16.347)*(\a/4 r) - 16.187 r )});  
  \filldraw (4,0.319) circle (1.4pt);
  \draw (0,1.975) circle (1.4pt);
  \filldraw (2,-0.319) circle (1.4pt);
  \draw (2,-1.975) circle (1.4pt);
  \draw[dashed,smooth] (2,0) -- (2,-1.945);  
  \draw[dashed,smooth] (4,0.1) -- (4,0.319);   
   \end{tikzpicture}
   \subcaption{Graph of $f^{(3,2)}$}
   \end{subfigure}}
\caption{\parbox[t]{30em}{Graphs of the eigenfunctions $f^{(k,2)}$ of $\Delta_{\eta}$, for $k \in \{ -2, -1,1, 2, 3,4\}$, where $\eta = \Lambda  + \pi^{-1} \delta_{1/2} + \pi^{-1} \delta_{1}$.}}
\label{fig:ef_2_atom}
\end{figure}

\begin{thm}\label{thm:two_atom_case_thm}
The eigenvalues of $\Delta_{\eta}$ are $\lambda^{(k,2)} \coloneqq -(b^{(k,2)})^2$ for $k\in\Z$, with corresponding eigenfunctions $f^{(k,2)}$.  Further, each eigenvalue has multiplicity one.
\end{thm}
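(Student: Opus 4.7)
The plan is to organize the detailed case analysis that precedes the theorem statement into a clean proof. I would begin by invoking Theorem~\ref{thm:main2} specialized to $N=2$: every eigenfunction $f$ has the piecewise form \eqref{eq:eigenfunction_form} on the intervals $(0,1/2]$ and $(1/2,1]$, and its parameters $(b,a_{1},a_{2},\gamma_{1},\gamma_{2})$ must satisfy the system \eqref{system_2}. The case $b=0$ is disposed of immediately by \Cref{cor:constant_ef_a_nonzero}, giving the constant eigenfunction $f^{(0,2)} = \mathds{1}$ with eigenvalue $0$. For $b\neq 0$, \Cref{cor:constant_ef_a_nonzero} also rules out $a_{1}=0$ or $a_{2}=0$, so I would exploit the rotation symmetry \eqref{eq:rotation_eigenfunction} to normalize $a_{1}=1$ with $\lvert a_{2}\rvert\leq 1$, and the trivial sign-flip symmetry $(b,a_{2},\gamma_{1},\gamma_{2})\leftrightarrow(b,-a_{2},\gamma_{1},\gamma_{2}+\pi)$ to reduce to $a_{2}\in[0,1]$.

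Next, I would handle the sub-case $a_{2}=1$ by splitting according to whether $b=\pm 1/\alpha$ or not. The special values $b=\pm 1/\alpha$ are treated by direct substitution into \eqref{system_2}, yielding the eigenfunctions recorded in \Cref{cor:ef_special_cases}. For $b\in\R\setminus\{0,\pm 1/\alpha\}$, I would carry out the key reduction: setting $\beta_{1}=\alpha b/(b+\gamma_{2}-\gamma_{1})$, $c=\gamma_{1}$, $\xi=b+\gamma_{2}-\gamma_{1}$ transforms \eqref{system_2} into the one-atom system \eqref{eq:one_atom_system_general}. Applying \Cref{lem:sol_equations} then produces, for each $k\in\Z$, the unique solution $\gamma_{1}^{(k,2)}$ of $\tan(\gamma_{1})=1-4\alpha\gamma_{1}+\alpha\pi+2\pi\alpha k$, together with $b^{(k,2)}$ and $\gamma_{2}^{(k,2)}$, thereby identifying the family $\{f^{(k,2)}\}_{k\in\Z}$. (I would also note that the eigenfunctions of \Cref{cor:ef_special_cases} already belong to this list, corresponding to specific values of~$k$.)

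The main obstacle is showing that the remaining case $\lvert a_{2}\rvert<1$ produces no further eigenfunctions. I would argue by contradiction: suppose $h$ is an eigenfunction of $\Delta_{\eta}$ orthogonal to $\operatorname{span}\{f^{(k,2)}\colon k\in\Z\}$. Using \Cref{cor:concatenation} together with the one-atom eigenfunction theorem \Cref{thm:eigenvalues_and_functions} and the spectral consequences of \Cref{thm:resolvent}, the rescaled functions $\{f^{(2m,2)}\circ\psi\}_{m\in\Z}$ with $\psi(x)=x/2$ form an orthonormal basis of $L^{2}_{\Lambda+2\alpha\delta_{1}}$, whence $\{f^{(2m,2)}\rvert_{(0,1/2]}\}_{m\in\Z}$ is an orthonormal basis of $L^{2}_{\Lambda\rvert_{(0,1/2]}+\alpha\delta_{1/2}}$. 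Testing the orthogonality relation $\langle h,f^{(2m,2)}\rangle_{\eta}=0$ against the indicator of an arbitrary Borel subset of $(0,1/2]$ yields $h(x)=-h(x+1/2)$ for $\eta$-almost every $x\in(0,1/2]$, and the left-continuity of $h\in\mathscr{D}_{\eta}^{2}$ promotes this to every $x\in(0,1/2]$. This forces $\lvert a_{2}\rvert=1$, contradicting our case assumption.

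Finally, multiplicity one follows from a short monotonicity observation: distinct values of $k$ yield distinct $\gamma_{1}^{(k,2)}\in(-\pi/2,\pi/2)$, and hence distinct values of $(b^{(k,2)})^{2}$. The hardest step, as flagged above, is the orthogonality/completeness argument ruling out $\lvert a_{2}\rvert<1$; everything else is essentially bookkeeping once the symmetries and the reduction to the one-atom system are in hand.
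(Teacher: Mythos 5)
Your proposal is correct and follows essentially the same route as the paper: reduction via \Cref{thm:main2} and the symmetries to $a_{1}=1$, $\lvert a_{2}\rvert\le 1$, the special cases $b=\pm 1/\alpha$, the substitution into the one-atom system of \Cref{lem:sol_equations}, the orthonormal-basis argument excluding $\lvert a_{2}\rvert<1$, and the distinctness of the $(b^{(k,2)})^{2}$ for multiplicity one. The only (immaterial) difference is that you apply the sign-flip symmetry up front to reduce to $a_{2}\in[0,1]$, whereas the paper treats $a_{2}=1$ first and then invokes that symmetry to dispose of $a_{2}=-1$.
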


Notice the only eigenfunction $f^{(k,2)}$ with $\mathbf{f}^{(k,2)}$ continuous at an atom is the constant function $f^{(0,2)}$. Indeed, if there exists a $k \in \Z \setminus \{ 0 \}$ such that $\mathbf{f}^{(k,2)}$ is continuous at an atom, we would have $\cos(\gamma_{1}^{(k,2)}) = 0$ or $\cos(b^{(k,2)}/2+\gamma_{2}^{(k,2)}) = 0$. Substituting the defining equations for $b^{(k,2)}$ and $\gamma_{2}^{(k,2)}$ into the latter, we obtain, in both cases, that $\gamma_{1}^{(k,2)} = \pi/2$, which contradicts the fact that $\gamma_{1}^{(k,2)} \in (-\pi/2, \pi/2)$.  We also note the following.
\begin{enumerate}
\item $\displaystyle \lim_{k \to +\infty} \gamma_{1}^{(k,2)} = - \lim_{k \to -\infty} \gamma_{1}^{(k,2)} = \pi/2$\\
\item $\displaystyle \lim_{k \to +\infty} \gamma_{2}^{(k,2)} \bmod 2\pi = \pi $ and $\displaystyle \lim_{k \to -\infty} \gamma_{2}^{(k,2)} \bmod 2 \pi  = 0$\\
\item $\displaystyle \lim_{k \to +\infty} b^{(k,2)}/(2\pi k) = \lim_{k \to -\infty} b^{(k,2)}/(2\pi k) = 1$
\end{enumerate}

\begin{cor}\label{cor:evcf_2atom}
Letting $N_{\eta} \colon \R^{+} \to \R$ denote the eigenvalue counting function of $-\Delta_{\eta}$, we have $\displaystyle \lim_{x\to \infty} \frac{\pi N_{\eta}(x)}{\sqrt{x}} = 1$.
\end{cor}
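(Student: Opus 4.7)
The plan is to deduce this Weyl-type asymptotic directly from \Cref{thm:two_atom_case_thm} together with the three asymptotic observations listed immediately before the corollary, in particular from
\begin{align*}
\lim_{k \to +\infty} \frac{b^{(k,2)}}{2\pi k} = \lim_{k \to -\infty} \frac{b^{(k,2)}}{2\pi k} = 1.
\end{align*}
By \Cref{thm:two_atom_case_thm}, the spectrum of $-\Delta_{\eta}$ is exactly $\{ (b^{(k,2)})^{2} \colon k \in \Z\}$, with every eigenvalue simple, so
\begin{align*}
N_{\eta}(x) = \#\bigl\{ k \in \Z \colon (b^{(k,2)})^{2} \le x \bigr\}.
\end{align*}

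First, fix $\varepsilon \in (0,1)$. Using the two displayed limits, choose $K_{\varepsilon} \in \N$ so that $(1-\varepsilon)\, 2\pi |k| \le |b^{(k,2)}| \le (1+\varepsilon)\, 2\pi |k|$ for every $k \in \Z$ with $|k| \ge K_{\varepsilon}$. Since there are only finitely many indices with $|k| < K_{\varepsilon}$, their contribution to $N_{\eta}(x)$ is bounded by a constant $C_{\varepsilon}$ independent of $x$. For $|k| \ge K_{\varepsilon}$ the inequality $(b^{(k,2)})^{2} \le x$ is sandwiched between $|k| \le \sqrt{x}/(2\pi(1+\varepsilon))$ and $|k| \le \sqrt{x}/(2\pi(1-\varepsilon))$, and counting integers in each symmetric window around $0$ gives
\begin{align*}
\frac{\sqrt{x}}{\pi(1+\varepsilon)} - C_{\varepsilon} \;\le\; N_{\eta}(x) \;\le\; \frac{\sqrt{x}}{\pi(1-\varepsilon)} + C_{\varepsilon}
\end{align*}
for all sufficiently large $x$, after absorbing additive $\pm 1$ errors into $C_{\varepsilon}$.

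Dividing by $\sqrt{x}/\pi$ and letting $x \to \infty$ yields
\begin{align*}
\frac{1}{1+\varepsilon} \;\le\; \liminf_{x \to \infty} \frac{\pi N_{\eta}(x)}{\sqrt{x}} \;\le\; \limsup_{x \to \infty} \frac{\pi N_{\eta}(x)}{\sqrt{x}} \;\le\; \frac{1}{1-\varepsilon},
\end{align*}
and the desired limit follows by letting $\varepsilon \searrow 0$. There is no real obstacle here; the only mild subtlety is ensuring that for $|k| \ge K_{\varepsilon}$ the values $b^{(k,2)}$ are nonzero and separated from $0$ (which the asymptotic guarantees), so that the counting argument on two symmetric tails is legitimate, and that the finitely many small-$|k|$ terms are absorbed into the $O(1)$ error. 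This reduces the proof to a clean squeeze argument and mirrors the proof of \Cref{cor:evcf_1atom}.
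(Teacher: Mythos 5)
Your argument is correct and is essentially the one the paper intends: the corollary is left to follow from \Cref{thm:two_atom_case_thm}, the simplicity of the eigenvalues, and the listed asymptotic $\lim_{k\to\pm\infty} b^{(k,2)}/(2\pi k)=1$, which is precisely the squeeze/counting argument you carry out. No gaps.
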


 In contrast to the case when $\eta$ is atomless and the case when $N=1$, the eigenvalues of $\Delta_{\eta}$ do not occur in pairs. However, we have that $\lim_{k \to \infty} \, -b^{(k,2)}/b^{(-k-1,2)} = 1$.

\begin{ex}
For $\eta = \Lambda + \pi^{-1}\delta_{\frac{1}{2}} + \pi^{-1}\delta_{1}$, we have that $\lambda^{(-2,2)} = 4 \pi^2$, $\lambda^{(-1,2)} = \pi^2$, $\lambda^{(1,2)} \approx 21.8$, $\lambda^{(2,2)} \approx 106.9$, $\lambda^{(3,2)} \approx 267.2$ and $\lambda^{(4,2)} \approx 505.3$; see \Cref{fig:ef_2_atom} for a graphical representation of $f^{(k,2)}$ for $k \in \{ -2, -1, 1, 2, 3, 4\}$.
\end{ex}

\section*{Acknowledgements}
	
The authors thank Hendrik Vogt for several enlightening discussions. They acknowledge the support of the Deutsche Forschungsgemeinschaft (DFG grant Ke\,1440/3-1). Part of this work was completed while the first and second authors were visiting Institut Mittag-Leffler as part of the research program \textsl{Fractal Geometry and Dynamics}. They are extremely grateful to the organisers and staff for their very kind hospitality, financial support and a stimulating atmosphere. The third author acknowledges the support from the BremenIDEA program at Universit{\"a}t Bremen. 

\bibliographystyle{plain}
\bibliography{bib}

\begin{thebibliography}{10}

\bibitem{A15b}
P.~Arzt.
\newblock Measure theoretic trigonometric functions.
\newblock {\em J. Fractal Geom.}, 2(2):115--169, 2015.

\bibitem{MR2513598}
R.~Beals and P.~C. Greiner.
\newblock Strings, waves, drums: spectra and inverse problems.
\newblock {\em Anal. Appl. (Singap.)}, 7(2):131--183, 2009.

\bibitem{MR556688}
M.~V. Berry.
\newblock Distribution of modes in fractal resonators.
\newblock In {\em Structural stability in physics ({P}roc. {I}nternat.
  {S}ymposia {A}ppl. {C}atastrophe {T}heory and {T}opological {C}oncepts in
  {P}hys., {I}nst. {I}nform. {S}ci., {U}niv. {T}\"ubingen, {T}\"ubingen,
  1978)}, volume~4 of {\em Springer Ser. Synergetics}, pages 51--53. Springer,
  Berlin, 1979.

\bibitem{MR573427}
M.~V. Berry.
\newblock Some geometric aspects of wave motion: wavefront dislocations,
  diffraction catastrophes, diffractals.
\newblock In {\em Geometry of the {L}aplace operator ({P}roc. {S}ympos. {P}ure
  {M}ath., {U}niv. {H}awaii, {H}onolulu, {H}awaii, 1979)}, Proc. Sympos. Pure
  Math., XXXVI, pages 13--28. Amer. Math. Soc., Providence, R.I., 1980.

\bibitem{Bi93}
N.~Biggs.
\newblock {\em Algebraic graph theory}.
\newblock Cambridge Mathematical Library. Cambridge University Press,
  Cambridge, second edition, 1993.

\bibitem{Fe57}
W.~Feller.
\newblock Generalized second order differential operators and their lateral
  conditions.
\newblock {\em Illinois J. Math.}, 1:459--504, 1957.

\bibitem{MR2017701}
U.~Freiberg.
\newblock Analytical properties of measure geometric {K}rein-{F}eller-operators
  on the real line.
\newblock {\em Math. Nachr.}, 260:34--47, 2003.

\bibitem{MR2030736}
U.~Freiberg.
\newblock A survey on measure geometric {L}aplacians on {C}antor like sets.
\newblock {\em Arab. J. Sci. Eng. Sect. C Theme Issues}, 28(1):189--198, 2003.
\newblock Wavelet and fractal methods in science and engineering, Part I.

\bibitem{Fr05}
U.~Freiberg.
\newblock Spectral asymptotics of generalized measure geometric {L}aplacians on
  {C}antor like sets.
\newblock {\em Forum Math.}, 17(1):87--104, 2005.

\bibitem{FZ02}
U.~Freiberg and M.~Z\"ahle.
\newblock Harmonic calculus on fractals---a measure geometric approach. {I}.
\newblock {\em Potential Anal.}, 16(3):265--277, 2002.

\bibitem{Fu87}
T.~Fujita.
\newblock A fractional dimension, self-similarity and a generalized diffusion
  operator.
\newblock In {\em Probabilistic methods in mathematical physics
  ({K}atata/{K}yoto, 1985)}, pages 83--90. Academic Press, Boston, MA, 1987.

\bibitem{MR1181812}
C.~Gordon, D.~Webb, and S.~Wolpert.
\newblock Isospectral plane domains and surfaces via {R}iemannian orbifolds.
\newblock {\em Invent. Math.}, 110(1):1--22, 1992.

\bibitem{Halmos:1950}
P.~R. Halmos.
\newblock {\em Measure {T}heory}.
\newblock D. Van Nostrand Company, Inc., New York, N. Y., 1950.

\bibitem{X_Jin_2017}
X.~Jin.
\newblock Spectral representation of one-dimensional {L}iouville brownian
  motion and {L}iouville brownian excursion, 2017.

\bibitem{KK68}
I.~S. Kac and M.~G. Kre{\u\i}n.
\newblock Criteria for the discreteness of the spectrum of a singular string.
\newblock {\em Izv. Vys\v s. U\v cebn. Zaved. Matematika}, 1958(2
  (3)):136--153, 1958.

\bibitem{MR0201237}
M.~Kac.
\newblock Can one hear the shape of a drum?
\newblock {\em Amer. Math. Monthly}, 73(4, part II):1--23, 1966.

\bibitem{KSW16}
M.~Kesseb\"ohmer, T.~Samuel, and H.~Weyer.
\newblock A note on measure-geometric {L}aplacians.
\newblock {\em Monatsh. Math.}, 181(3):643--655, 2016.

\bibitem{KSW17}
M.~Kesseb\"ohmer, T.~Samuel, and H.~Weyer.
\newblock Measure-geometric {L}aplacians for discrete distributions.
\newblock In {\em {\normalfont To appear in} Horizons of Fractal Geometry and
  Complex Dimensions}. Contemp. Math., 2017.

\bibitem{KL01}
J.~Kigami and M.~L. Lapidus.
\newblock Self-similarity of volume measures for {L}aplacians on p.c.f.\
  self-similar fractals.
\newblock {\em Comm. Math. Phys.}, 217(1):165--180, 2001.

\bibitem{MR661628}
S.~Kotani and S.~Watanabe.
\newblock Kre\u\i n's spectral theory of strings and generalized diffusion
  processes.
\newblock In {\em Functional analysis in {M}arkov processes ({K}atata/{K}yoto,
  1981)}, volume 923 of {\em Lecture Notes in Math.}, pages 235--259. Springer,
  Berlin-New York, 1982.

\bibitem{MR994168}
M.~L. Lapidus.
\newblock Fractal drum, inverse spectral problems for elliptic operators and a
  partial resolution of the {W}eyl-{B}erry conjecture.
\newblock {\em Trans. Amer. Math. Soc.}, 325(2):465--529, 1991.

\bibitem{MR1234502}
M.~L. Lapidus.
\newblock Vibrations of fractal drums, the {R}iemann hypothesis, waves in
  fractal media and the {W}eyl-{B}erry conjecture.
\newblock In {\em Ordinary and partial differential equations, {V}ol.\ {IV}
  ({D}undee, 1992)}, volume 289 of {\em Pitman Res. Notes Math. Ser.}, pages
  126--209. Longman Sci. Tech., Harlow, 1993.

\bibitem{MR1046509}
M.~L. Lapidus and C.~Pomerance.
\newblock Fonction z\^eta de {R}iemann et conjecture de {W}eyl-{B}erry pour les
  tambours fractals.
\newblock {\em C. R. Acad. Sci. Paris S\'er. I Math.}, 310(6):343--348, 1990.

\bibitem{MR1189091}
M.~L. Lapidus and C.~Pomerance.
\newblock The {R}iemann zeta-function and the one-dimensional {W}eyl-{B}erry
  conjecture for fractal drums.
\newblock {\em Proc. London Math. Soc. (3)}, 66(1):41--69, 1993.

\bibitem{MR1356166}
M.~L. Lapidus and C.~Pomerance.
\newblock Counterexamples to the modified {W}eyl-{B}erry conjecture on fractal
  drums.
\newblock {\em Math. Proc. Cambridge Philos. Soc.}, 119(1):167--178, 1996.

\bibitem{MR0162204}
J.~Milnor.
\newblock Eigenvalues of the {L}aplace operator on certain manifolds.
\newblock {\em Proc. Nat. Acad. Sci. U.S.A.}, 51:542, 1964.

\bibitem{RS81}
M.~Reed and B.~Simon.
\newblock {\em Methods of modern mathematical physics. {I}}.
\newblock Academic Press, Inc. [Harcourt Brace Jovanovich, Publishers], New
  York, second edition, 1980.
\newblock Functional analysis.

\bibitem{MR3272822}
R.~Rhodes and V.~Vargas.
\newblock Spectral dimension of {L}iouville quantum gravity.
\newblock {\em Ann. Henri Poincar\'e}, 15(12):2281--2298, 2014.

\bibitem{MR690649}
H.~Urakawa.
\newblock Bounded domains which are isospectral but not congruent.
\newblock {\em Ann. Sci. \'Ecole Norm. Sup. (4)}, 15(3):441--456, 1982.

\bibitem{Weyl:1912}
H.~Weyl.
\newblock {\"Uber die {A}bh\"angigkeit der {E}igenschwingungen einer {M}embran
  von deren {B}egrenzung}.
\newblock {\em J. Angew. Math.}, 141:1--11, 1912.

\end{thebibliography}

\end{document}